\documentclass[11pt,leqno]{article}
\usepackage{graphicx, amsfonts, amsthm, amsxtra, amssymb, verbatim, makeidx}
\usepackage{subeqnarray, relsize}
\usepackage[mathscr]{euscript}
\usepackage{hyperref}
\textheight 24truecm
\textwidth 16truecm
\addtolength{\oddsidemargin}{-1.5truecm}
\addtolength{\topmargin}{-2truecm}
\makeindex
\makeglossary
\usepackage{amsmath,amssymb, amsthm}
\usepackage[utf8]{inputenc}
\usepackage[activeacute, english]{babel}
\usepackage{wrapfig}
\usepackage{amssymb, amsmath, amsthm} 
\usepackage{graphicx}
\usepackage{color}
\usepackage{amssymb}
\usepackage{url}
\usepackage{pdfpages}
\usepackage{fancyhdr}
\usepackage{hyperref}
\usepackage{subfig}
\usepackage{mathrsfs}
\usepackage{float}
\usepackage{pgf,tikz}
\usetikzlibrary{arrows}
\usetikzlibrary[patterns]

\DeclareMathOperator{\tr}{tr}
\DeclareMathOperator{\pn}{{\sf p}}




\newcommand{\CH}{\text{CH}}
\newcommand{\res}{\text{res}}
\newcommand{\Jac}{\text{Jac}}
\newcommand{\Hess}{\text{Hess}}
\newcommand{\N}{\mathbb{N}}
\newcommand{\Z}{\mathbb{Z}}
\newcommand{\C}{\mathbb{C}}
\newcommand{\Q}{\mathbb{Q}}
\newcommand{\R}{\mathbb{R}}
\newcommand{\K}{\mathbb{K}}
\renewcommand{\P}{\mathbb{P}}
\newcommand{\T}{\mathbb{T}}
\newcommand{\U}{\mathcal{U}}
\newcommand{\Hip}{\mathbb{H}}

\renewcommand{\Im}{\mathrm{I}\text{m}}





\newcommand{\dR}{\text{dR}}

\newcommand{\prim}{\text{prim}}





\newcommand{\p}[3]{\left #1 #3 \right #2}






\theoremstyle{theorem}

\newtheorem{thm}{Theorem}[section]



\newtheorem{lemma}[thm]{Lemma}

\newtheorem{cor}[thm]{Corollary}

\newtheorem{prop}[thm]{Proposition}





\theoremstyle{definition}

\newtheorem{rmk}{Remark}[section]

\newtheorem{notation}{Notation}[section]



\newtheorem{dfn}{Definition}[section]

\renewenvironment{proof}{{\bfseries \noindent Proof} }{ \qed \\}


\newcount\colveccount 
\newcommand*\colvec[1]{ 
    \global\colveccount#1
    \begin{pmatrix} 
    \colvecnext 
}

\def\colvecnext#1{
    #1
    \global\advance\colveccount-1 
    \ifnum\colveccount>0 
        \\
        \expandafter\colvecnext 
    \else 
        \end{pmatrix}
    \fi
}

\begin{document}


\def\gru{\mu} 
\def\pg{{ \sf S}}               
\def\TS{\mathlarger{\bf T}}                
\def\NB{{\mathlarger{\bf N}}}
\def\group{{\sf G}}
\def\NLL{{\rm NL}}   

\def\plc{{ Z_\infty}}    
\def\pola{{u}}      
\newcommand\licy[1]{{\mathbb P}^{#1}} 
\newcommand\aoc[1]{Z^{#1}}     
\def\HL{{\rm Ho}}     
\def\NLL{{\rm NL}}   

\def\Z{\mathbb{Z}}                   
\def\Q{\mathbb{Q}}                   
\def\C{\mathbb{C}}                   
\def\N{\mathbb{N}}                   
\def\uhp{{\mathbb H}}                
\def\A{\mathbb{A}}                   
\def\dR{{\rm dR}}                    
\def\F{{\cal F}}                     
\def\Sp{{\rm Sp}}                    
\def\Gm{\mathbb{G}_m}                 
\def\Ga{\mathbb{G}_a}                 
\def\Tr{{\rm Tr}}                      
\def\tr{{{\mathsf t}{\mathsf r}}}                 
\def\spec{{\rm Spec}}            
\def\ker{{\rm ker}}              
\def\GL{{\rm GL}}                
\def\ker{{\rm ker}}              
\def\coker{{\rm coker}}          
\def\im{{\rm Im}}               
\def\coim{{\rm Coim}}            
\def\p{{\sf  p}}
\def\U{{\cal U}}   

\def\weig{{\nu}}
\def\r{{ r}}                       
\def\k{{\sf k}}                     
\def\ring{{\sf R}}                   
\def\X{{\sf X}}                      
\def\Ua{{   L}}                      
\def\T{{\sf T}}                      
\def\asone{{\sf A}}                  

\def\Ts{{\sf S}}
\def\cmv{{\sf M}}                    
\def\BG{{\sf G}}                       
\def\podu{{\sf pd}}                   
\def\ped{{\sf U}}                    
\def\per{{\bf  P}}                   
\def\gm{{  A}}                    
\def\gma{{\sf  B}}                   
\def\ben{{\sf b}}                    

\def\Rav{{\mathfrak M }}                     
\def\Ram{{\mathfrak C}}                     
\def\Rap{{\mathfrak G}}                     

\def\mov{{\sf  m}}                    
\def\Yuk{{\sf C}}                     
\def\Ra{{\sf R}}                      
\def\hn{{ h}}                         
\def\cpe{{\sf C}}                     
\def\g{{\sf g}}                       
\def\t{{\sf t}}                       
\def\pedo{{\sf  \Pi}}                  

\def\Der{{\rm Der}}                   
\def\MMF{{\sf MF}}                    
\def\codim{{\rm codim}}                
\def\dim{{\rm    dim}}                
\def\Lie{{\rm Lie}}                   

\def\u{{\sf u}}                       

\def\imh{{  \Psi}}                 
\def\imc{{  \Phi }}                  
\def\stab{{\rm Stab }}               
\def\Vec{{\rm Vec}}                 

\def\Fg{{\sf F}}     
\def\hol{{\rm hol}}  
\def\non{{\rm non}}  
\def\alg{{\rm alg}}  
\def\tra{{\rm tra}}  

\def\bcov{{\rm \O_\T}}       

\def\leaves{{\cal L}}        

\def\cat{{\cal A}}              
\def\im{{\rm Im}}               

\def\pn{{\sf p}}              
\def\Pic{{\rm Pic}}           
\def\free{{\rm free}}         
\def \NS{{\rm NS}}    
\def\tor{{\rm tor}}
\def\codmod{{\xi}}    

\def\GM{{\rm GM}}

\def\perr{{\sf q}}        
\def\perdo{{\cal K}}   
\def\sfl{{\mathrm F}} 
\def\sp{{\mathbb S}}  

\newcommand\diff[1]{\frac{d #1}{dz}} 
\def\End{{\rm End}}              

\def\sing{{\rm Sing}}            
\def\cha{{\rm char}}             
\def\Gal{{\rm Gal}}              
\def\jacob{{\rm jacob}}          
\def\tjurina{{\rm tjurina}}      
\newcommand\Pn[1]{\mathbb{P}^{#1}}   
\def\P{\mathbb{P}}
\def\Ff{\mathbb{F}}                  

\def\O{{\cal O}}                     

\def\ring{{\mathsf R}}                         
\def\R{\mathbb{R}}                   

\newcommand\ep[1]{e^{\frac{2\pi i}{#1}}}
\newcommand\HH[2]{H^{#2}(#1)}        
\def\Mat{{\rm Mat}}              
\newcommand{\mat}[4]{
     \begin{pmatrix}
            #1 & #2 \\
            #3 & #4
       \end{pmatrix}
    }                                
\newcommand{\matt}[2]{
     \begin{pmatrix}                 
            #1   \\
            #2
       \end{pmatrix}
    }
\def\cl{{\rm cl}}                

\def\hc{{\mathsf H}}                 
\def\Hb{{\cal H}}                    
\def\pese{{\sf P}}                  

\def\PP{\tilde{\cal P}}              
\def\K{{\mathbb K}}                  

\def\M{{\cal M}}
\def\RR{{\cal R}}
\newcommand\Hi[1]{\mathbb{P}^{#1}_\infty}
\def\pt{\mathbb{C}[t]}               
\def\gr{{\rm Gr}}                
\def\Im{{\rm Im}}                
\def\Re{{\rm Re}}                
\def\depth{{\rm depth}}
\newcommand\SL[2]{{\rm SL}(#1, #2)}    
\newcommand\PSL[2]{{\rm PSL}(#1, #2)}  
\def\Resi{{\rm Resi}}              

\def\L{{\cal L}}                     
\def\Aut{{\rm Aut}}              
\def\any{R}                          
\newcommand\ovl[1]{\overline{#1}}    

\newcommand\mf[2]{{M}^{#1}_{#2}}     
\newcommand\mfn[2]{{\tilde M}^{#1}_{#2}}     

\newcommand\bn[2]{\binom{#1}{#2}}    
\def\ja{{\rm j}}                 
\def\Sc{\mathsf{S}}                  
\newcommand\es[1]{g_{#1}}            
\newcommand\V{{\mathsf V}}           
\newcommand\WW{{\mathsf W}}          
\newcommand\Ss{{\cal O}}             
\def\rank{{\rm rank}}                
\def\Dif{{\cal D}}                   
\def\gcd{{\rm gcd}}                  
\def\zedi{{\rm ZD}}                  
\def\BM{{\mathsf H}}                 
\def\plf{{\sf pl}}                             
\def\sgn{{\rm sgn}}                      
\def\diag{{\rm diag}}                   
\def\hodge{{\rm Hodge}}
\def\HF{{ F}}                                
\def\WF{{ W}}                               
\def\HV{{\sf HV}}                                
\def\pol{{\rm pole}}                               
\def\bafi{{\sf r}}
\def\id{{\rm id}}                               
\def\gms{{\sf M}}                           
\def\Iso{{\rm Iso}}                           

\def\hl{{\rm L}}    
\def\imF{{\rm F}}
\def\imG{{\rm G}}

\begin{center}
{\LARGE\bf Periods of complete intersection algebraic cycles}

\vspace{.25in} {\large {\sc Roberto  Villaflor Loyola}}\footnote{
Instituto de Matem\'atica Pura e Aplicada, IMPA, Estrada Dona Castorina, 110, 22460-320, Rio de Janeiro, RJ, Brazil,
{\tt rvilla@impa.br}}
\end{center}


\def\pn{{\sf p}}
\def\rootsG{{\sf G}}
\def\NLL{{\rm NL}}   

\begin{abstract}
For every even number $n$, and every $n$-dimensional smooth hypersurface of $\P^{n+1}$ of degree $d$, we compute the periods of all its $\frac{n}{2}$-dimensional complete intersection algebraic cycles. Furthermore, we determine the image of the given algebraic cycle under the cycle class map inside the De Rham cohomology group of the corresponding hypersurface in terms of its Griffiths basis and the polarization. As an application, we use this information to address variational Hodge conjecture for a non complete intersection algebraic cycle. We prove that the locus of general hypersurfaces containing two
linear cycles whose intersection is of dimension less than $\frac{n}{2}-\frac{d}{d-2}$, corresponds to the Hodge
locus of any integral combination of such linear cycles. 
\end{abstract}

\section{Introduction}
Consider $X$ any smooth degree $d$ hypersurface of $\P^{n+1}$, and let us denote by $\theta\in H^{1,1}(X)\cap H^2(X,\Z)$ its polarization. From Lefschetz hyperplane section theorem it follows that the image of the cycle class map for codimension $k$ algebraic cycles is generated over $\Q$ by $\theta^k$ for $k\neq \frac{n}{2}$. In the case $n$ is even, it remains to determine the image of the cycle class map for $\frac{n}{2}$-dimensional algebraic cycles in $X$. The Hodge conjecture claims that this image corresponds to the space of Hodge cycles $H^{\frac{n}{2},\frac{n}{2}}(X)\cap H^n(X,\Q)$. Since the cycle class map of an algebraic cycle captures the cohomological information of the cycle, to describe its image is equivalent to determine all its periods. In order to compute the periods of an algebraic cycle, we restrict ourselves to the subgroup of $\CH^\frac{n}{2}(X)$ generated by the algebraic subvarieties $Z\subseteq X$ that are complete intersections inside $\P^{n+1}$. Our main result is the following:

\begin{thm}
\label{cycleclass}
Let $X\subseteq \P^{n+1}$ be a smooth degree $d$ hypersurface of even dimension $n$ given by $X=\{F=0\}$. Suppose that $Z:=\{f_1=\cdots=f_{\frac{n}{2}+1}=0\}\subseteq X$ is a complete intersection inside $\P^{n+1}$ and 
$$
I(Z)=\langle f_1,\ldots,f_{\frac{n}{2}+1}\rangle\subseteq \C[x_0,\ldots,x_{n+1}].
$$
Write
$$
F=f_1g_1+\cdots+f_{\frac{n}{2}+1}g_{\frac{n}{2}+1},
$$
and define 
$$
H=(h_0,\ldots,h_{n+1}):=(f_1,g_1,\ldots,f_{\frac{n}{2}+1},g_{\frac{n}{2}+1}).
$$
Then
$$
[Z]=\frac{\deg(Z)}{\deg(X)} \theta^\frac{n}{2}-\frac{\frac{n}{2}!}{\deg(X)}\res\left(\frac{\det(\Jac(H))\Omega}{F^{\frac{n}{2}+1}}\right)^{\frac{n}{2},\frac{n}{2}}\in H^{\frac{n}{2},\frac{n}{2}}(X),
$$
where $\Omega=\sum_{i=0}^{n+1}(-1)^ix_i dx_0\wedge\cdots\widehat{dx_i}\cdots\wedge dx_{n+1}$ is the generator of $H^0(\P^{n+1},\Omega_{\P^{n+1}}^{n+1}(n+2))$.
\end{thm}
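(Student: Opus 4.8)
The plan is to compute the image of $[Z]$ under the cycle class map by pairing against a basis of $H^{n/2,n/2}(X)$, exploiting the fact that $Z$ is cut out by a complete intersection whose defining equations relate to $F$ via the chosen expression $F=\sum f_ig_i$. First I would recall the description of primitive cohomology of $X$ in middle degree via Griffiths residues: every class in $H^{n/2,n/2}_{\mathrm{prim}}(X)$ is of the form $\mathrm{res}(P\,\Omega/F^{n/2+1})$ for a suitable homogeneous polynomial $P$ of degree $(n/2+1)d-(n+2)$, and the intersection (cup product) pairing of two such residue classes can be computed by a Grothendieck/Carlson–Griffiths type residue formula in the Jacobian ring of $F$. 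So it suffices to verify two things: that the $\theta^{n/2}$-component of $[Z]$ equals $\frac{\deg Z}{\deg X}\theta^{n/2}$, which is immediate because projecting onto the (one-dimensional) non-primitive part of $H^n(X)$ just reads off the degree of the cycle; and that the primitive part of $[Z]$ equals $-\frac{(n/2)!}{\deg X}\,\mathrm{res}\!\left(\frac{\det(\mathrm{Jac}(H))\,\Omega}{F^{n/2+1}}\right)^{n/2,n/2}$.

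For the primitive part, the key step is to compute, for an arbitrary residue class $\omega_P=\mathrm{res}(P\,\Omega/F^{n/2+1})$, the integral $\int_Z \omega_P$ (equivalently the cup product $[Z]\smile \omega_P$ up to the appropriate normalization and sign). Here is where the complete intersection structure enters decisively: since $Z=\{f_1=\cdots=f_{n/2+1}=0\}$, one can use the adjunction/residue description of the fundamental class of a complete intersection and reduce $\int_Z\omega_P$ to a Grothendieck residue symbol on $\P^{n+1}$ of the form $\mathrm{Res}\!\left[\begin{smallmatrix}P\cdot(\text{something})\,\Omega\\ f_1,\ldots,f_{n/2+1}\end{smallmatrix}\right]$. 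The relation $F=\sum f_ig_i$, together with the identity expressing $\det\mathrm{Jac}(H)$ (the Jacobian of the $2(n/2+1)=n+2$ polynomials $f_1,g_1,\ldots,f_{n/2+1},g_{n/2+1}$ in $n+2$ variables) as an element of the Jacobian ideal of $F$ modulo the ideal $I(Z)$, is exactly what converts this Grothendieck residue into the Carlson–Griffiths cup-product pairing of $\omega_P$ with $\mathrm{res}(\det(\mathrm{Jac}(H))\Omega/F^{n/2+1})$. Comparing the two bilinear forms on the (finite-dimensional) space of admissible $P$'s — one given by integration over $Z$, the other by the residue pairing with the explicit class — and using nondegeneracy of the cup product pairing on primitive middle cohomology, forces the stated equality.

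The numerical constant $-\frac{(n/2)!}{\deg X}$ and the sign will be pinned down by carefully tracking the normalizations in three places: (i) Griffiths' formula $\mathrm{res}(P\Omega/F^{k+1})$ versus the pole-order filtration and the factor $k!$ that appears when differentiating $F^{-k}$; (ii) the residue formula for the fundamental class of the complete intersection $Z$ inside $\P^{n+1}$, which contributes the factor $\deg(X)^{-1}$ through the adjunction for $X\subset\P^{n+1}$ (more precisely, through $F$ itself being one of the "degrees" absorbed when passing from the $\P^{n+1}$-residue to the $X$-residue); and (iii) the transpose/orientation sign in the intersection pairing. I would fix these by testing the formula on the simplest case, e.g. $Z$ a linear $\mathbb{P}^{n/2}$ inside a hypersurface (so $f_1,\ldots,f_{n/2}$ are linear coordinates and $f_{n/2+1}=$ a degree $d-1$ form), where both sides can be evaluated directly.

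The main obstacle I anticipate is step two: correctly identifying $\det(\mathrm{Jac}(H))$ modulo $I(Z)$ with the polynomial that represents the Poincaré dual of $Z$ in the Jacobian ring, i.e.\ showing that this particular Jacobian determinant is the right "multiplier" turning the complete-intersection fundamental class into a Griffiths residue class on $X$. This requires a clean statement of how the fundamental class of a complete intersection subvariety of a hypersurface sits inside primitive cohomology — likely via a Koszul/Cayley-trick computation or an explicit Čech–de Rham representative — and making sure the bookkeeping of which $f_i$ and which $g_i$ go where in $H=(f_1,g_1,\ldots,f_{n/2+1},g_{n/2+1})$ produces exactly $\det\mathrm{Jac}(H)$ and not some permuted or sign-twisted variant. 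Everything else (the $\theta^{n/2}$-component, nondegeneracy of the pairing, the constant) is comparatively routine once this identification is in hand.
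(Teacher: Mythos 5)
Your overall framework matches the paper's: decompose $[Z]$ into its $\theta^{\frac{n}{2}}$-component and its primitive part, read off the $\theta^{\frac{n}{2}}$-coefficient from the degree of the cycle, and pin down the primitive part by pairing against all Griffiths residue classes $\omega_P$ and invoking nondegeneracy of the cup product on $H^{\frac{n}{2},\frac{n}{2}}(X)_{\mathrm{prim}}$. The paper does exactly this in Section~\ref{pt1}, using Proposition~\ref{wedge} (the Carlson--Griffiths cup-product formula) and Theorem~\ref{15.5.2018.2} (the explicit period formula $\int_Z\omega_P$) as the two inputs to the comparison.

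The genuine gap is precisely the step you yourself flag as ``the main obstacle'': computing $\int_Z\omega_P$ for arbitrary $P$. This is the entire content of Theorem~\ref{15.5.2018.2}, which the paper proves by a substantial argument, not a formality. Two specific issues with your Grothendieck-residue sketch. First, the object you write, $\mathrm{Res}\bigl[P\cdot(\text{something})\,\Omega\,;\,f_1,\ldots,f_{\frac{n}{2}+1}\bigr]$ on $\P^{n+1}$, involves only $\frac{n}{2}+1$ denominators in $n+2$ variables, so it is not a Grothendieck point residue; the actual mechanism must somehow incorporate both the $f_i$'s and the $g_i$'s, and indeed the paper gets $\det(\mathrm{Jac}(H))$ with $H=(f_1,g_1,\ldots,f_{\frac{n}{2}+1},g_{\frac{n}{2}+1})$ only at the very end of an inductive chain, not from a single residue identity. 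Second, $Z$ is merely assumed to be a complete intersection, possibly singular, so a residue or adjunction computation for $\int_Z\omega_P$ does not apply directly; the paper handles this by a continuity argument (Proposition~\ref{cont}) that reduces to generic $(f_i,g_i)$ where each $Z_l$ in the chain $Z=Z_0\subseteq Z_1\subseteq\cdots\subseteq Z_{\frac{n}{2}+1}=\P^{n+1}$ is a smooth hyperplane section of the next, and then applies the explicit \v{C}ech coboundary map (Proposition~\ref{27.10.1}, Lemma~\ref{15.5.2018.3}) $\frac{n}{2}+1$ times to push the period of $Z$ up to a top-form period on $\P^{n+1}$, finally evaluated by Macaulay's theorem (Corollary~\ref{15.5.2018}). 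Your proposal neither carries out this computation nor supplies an alternative complete argument, and without it the comparison in step four has nothing to compare.
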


The previous result follows from a direct computation of all the periods of the algebraic cycle $Z$ over a set of generators of the De Rham cohomology group of $X$, therefore is a consequence of the following result:

\begin{thm}
\label{15.5.2018.2}
Under the hypothesis of Theorem \ref{cycleclass}, let $J^F:=\langle \frac{\partial F}{\partial x_0},\ldots,\frac{\partial F}{\partial x_{n+1}}\rangle\subseteq \C[x_0,\ldots,x_{n+1}]$ be the Jacobian ideal associated to $F$. Then, for every homogeneous polynomial $P\in \C[x_0,\ldots,x_{n+1}]$ of degree $\deg(P)={(d-2)(\frac{n}{2}+1)}$
\begin{equation}
\label{19.10.2018}
\displaystyle\int_Z\res\left(\frac{P\Omega}{F^{\frac{n}{2}+1}}\right)=\frac{(2\pi\sqrt{-1})^{\frac{n}{2}}}{\frac{n}{2}!}c\cdot (d-1)^{n+2},    
\end{equation}
where $c\in \C$ is the unique number such that 
$$
P\cdot \det(\Jac(H))\equiv c\cdot \det(\Hess(F))\text{ (mod }J^F).
$$
\end{thm}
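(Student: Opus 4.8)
The plan is to reduce the period on the left of \eqref{19.10.2018} to a Grothendieck local residue attached to the Jacobian ideal $J^F$, and then to extract $c$ and the factor $(d-1)^{n+2}$ from the Gorenstein structure of the Jacobian ring $R^F:=\C[x_0,\dots,x_{n+1}]/J^F$.

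First I would carry out the analytic reduction. Representing $res\!\left(\frac{P\Omega}{F^{\frac n2+1}}\right)$ by its Griffiths form and using the Leray coboundary (tube) isomorphism, the period $\int_Z res\!\left(\frac{P\Omega}{F^{\frac n2+1}}\right)$ becomes the integral of the rational form $\frac{P\Omega}{F^{\frac n2+1}}$ over a real cycle fibered over $Z$; because $Z=\{f_1=\cdots=f_{\frac n2+1}=0\}$ is a complete intersection and $F=\sum f_ig_i$, this can be evaluated as an iterated residue. Here the factor $\tfrac1{\frac n2!}$ comes from lowering the pole order of $\frac{P\Omega}{F^{\frac n2+1}}$ from $\frac n2+1$ to $1$, and the power of $2\pi\sqrt{-1}$ comes from the iterated Cauchy integrals. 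The relevant algebra is governed by the identity $\nabla F=Jac(H)^{\top}\cdot(g_1,f_1,\dots,g_{\frac n2+1},f_{\frac n2+1})$, which in particular shows $J^F\subseteq (f_1,g_1,\dots,f_{\frac n2+1},g_{\frac n2+1})$, and by the fact that $f_1,g_1,\dots,f_{\frac n2+1},g_{\frac n2+1}$ form a regular sequence in $\C[x_0,\dots,x_{n+1}]$: their only common zero is $0\in\A^{n+2}$, because a common zero $p\neq 0$ would satisfy $\nabla F(p)=\sum_i\bigl(g_i(p)\nabla f_i(p)+f_i(p)\nabla g_i(p)\bigr)=0$, contradicting the smoothness of $X$. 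Carrying out the reduction relative to this regular system — which is where $\det(Jac(H))$ enters — should yield
\[
\int_Z res\!\left(\frac{P\Omega}{F^{\frac n2+1}}\right)=\frac{(2\pi\sqrt{-1})^{\frac n2}}{\frac n2!}\;\mathrm{Res}\!\left[\begin{matrix} P\cdot\det(Jac(H))\,dx_0\wedge\cdots\wedge dx_{n+1}\\ \frac{\partial F}{\partial x_0},\ \dots,\ \frac{\partial F}{\partial x_{n+1}}\end{matrix}\right],
\]
the right-hand side being the Grothendieck local residue at $0\in\A^{n+2}$. I expect this step — producing the explicit local model of the residue, justifying the required homology between the tube around $X$ over $Z$ and the coordinate cycle of the regular system, and checking that the loci where the adapted charts degenerate contribute nothing to the integral — to be the main obstacle; for linear $Z$ the corresponding computation is the one carried out in earlier work on periods of linear cycles.

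The remaining steps are algebraic. Since $\deg P=(d-2)(\tfrac n2+1)$ and $\det(Jac(H))$ has degree $\sum_i(d_i-1)+\sum_i(d-d_i-1)=(d-2)(\tfrac n2+1)$, the polynomial $P\cdot\det(Jac(H))$ has degree $(n+2)(d-2)$, which is exactly the socle degree of the Artinian Gorenstein ring $R^F$. As $\det(Hess(F))$ generates the one-dimensional socle $R^F_{(n+2)(d-2)}$, there is a unique $c\in\C$ with $P\cdot\det(Jac(H))\equiv c\cdot\det(Hess(F))\pmod{J^F}$, and since the Grothendieck residue against $\frac{\partial F}{\partial x_0},\dots,\frac{\partial F}{\partial x_{n+1}}$ vanishes on $J^F$,
\[
\mathrm{Res}\!\left[\begin{matrix} P\det(Jac(H))\,dx\\ \frac{\partial F}{\partial x_0},\dots,\frac{\partial F}{\partial x_{n+1}}\end{matrix}\right]=c\cdot\mathrm{Res}\!\left[\begin{matrix} \det(Hess(F))\,dx\\ \frac{\partial F}{\partial x_0},\dots,\frac{\partial F}{\partial x_{n+1}}\end{matrix}\right].
\]
Finally, the Hessian residue equals the Milnor number of $F$, which is $(d-1)^{n+2}$ and is independent of the chosen smooth $F$ of degree $d$; one checks this on the Fermat $F=x_0^d+\cdots+x_{n+1}^d$, where $\frac{\partial F}{\partial x_i}=dx_i^{d-1}$ and $\det(Hess(F))=(d(d-1))^{n+2}\prod_i x_i^{d-2}$, so the residue factors as $\prod_{i=0}^{n+1}\mathrm{Res}\!\left[\begin{matrix} d(d-1)x_i^{d-2}\,dx_i\\ dx_i^{d-1}\end{matrix}\right]$ and each factor is the residue of $\tfrac{d-1}{x_i}\,dx_i$ at the origin, namely $d-1$. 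Chaining the three displayed identities gives \eqref{19.10.2018}.
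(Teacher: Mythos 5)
Your approach routes the period through a Grothendieck local residue at $0\in\C^{n+2}$, which is a genuinely different packaging from the paper's argument: the paper first proves a continuity statement (Proposition~\ref{cont}), reduces to a generic configuration in which the chain $Z=Z_0\subseteq Z_1\subseteq\cdots\subseteq Z_{\frac{n}{2}+1}=\P^{n+1}$ consists of smooth hyperplane sections, and then runs an explicit induction (Lemma~\ref{15.5.2018.3}) on the \v{C}ech-coboundary map of Proposition~\ref{27.10.1}, landing on a top-degree cocycle on $\P^{n+1}$ which is evaluated by Corollary~\ref{15.5.2018}. Your concluding algebraic steps --- the degree count, the passage from $P\cdot\det(Jac(H))$ to $c\cdot\det(Hess(F))$ via the Gorenstein socle, the vanishing of the residue on $J^F$, and the Fermat evaluation of the Hessian residue as $(d-1)^{n+2}$ --- are all correct and essentially coincide with the paper's use of Macaulay's theorem and Corollary~\ref{15.5.2018}. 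The observation that $f_1,g_1,\dots,f_{\frac{n}{2}+1},g_{\frac{n}{2}+1}$ have no common zero away from the origin because $X$ is smooth is also correct and is the reason the local residue is well-defined.

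However, there is a genuine gap, and it is precisely where you flag the main obstacle. The central identity
\[
\int_Z res\!\left(\frac{P\,\Omega}{F^{\frac n2+1}}\right)
=\frac{(2\pi\sqrt{-1})^{\frac n2}}{\frac n2!}\,
\mathrm{Res}\!\left[\begin{matrix} P\cdot\det(Jac(H))\,dx_0\wedge\cdots\wedge dx_{n+1}\\ \frac{\partial F}{\partial x_0},\ \dots,\ \frac{\partial F}{\partial x_{n+1}}\end{matrix}\right]
\]
is asserted, not proved; you write that you \emph{would} carry out the analytic reduction and \emph{expect} it to be the main difficulty. But this identity is the entire content of the theorem --- the algebraic cleanup that follows is routine. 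Deriving it requires (i) an explicit tube-cycle or coboundary model that actually produces the factor $\det(Jac(H))$ and the normalization $(2\pi\sqrt{-1})^{\frac n2}/\frac n2!$, and (ii) a justification that the intermediate loci along which the iterated residue is taken are in a position where the local computation applies. The latter is nontrivial: for an arbitrary decomposition $F=\sum f_ig_i$ there is no reason for the varieties $Z_l=\{f_{l+1}=\cdots=f_{\frac n2+1}=0\}$ to be smooth or to meet $X$ transversally, and the paper devotes Proposition~\ref{cont} to a continuity argument specifically to sidestep this. Your sketch does not address this degeneration, so the local picture you invoke is not available in general. Until the displayed residue identity and the reduction to a good configuration are actually established, the proposal is an outline of a plausible alternative route rather than a proof.
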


It is implicit in the statement of Theorem \ref{15.5.2018.2} that $P\cdot\det(\Jac(H))\in J^F+\langle \det(\Hess(F))\rangle$. In fact if we denote the \textit{Jacobian ring} by
$
R^F:={\C[x_0,\ldots,x_{n+1}]}/{J^F},
$
it satisfies $R^F_{(d-2)(n+2)}=\C\cdot \det(\Hess(F))$. This is consequence of a classical theorem due to Macaulay (see \S\ref{rochebday}, Theorem \ref{08.10.6}), which implies that $R^F$ is an Artinian Gorenstein algebra of socle $(d-2)(n+2)$. We will briefly discuss Artinian Gorenstein algebras and ideals in \S \ref{rochebday}. 

The advantage of working with periods instead of considering directly the cohomology classes is that the period equation \eqref{19.10.2018} depends continuously on the parameters $H=(h_0,\ldots,h_{n+1})$ (see \S\ref{2}, Proposition \ref{cont}). And so, we can perturb the pair $(Z,X)$ in order to reduce ourselves to the case where $Z$ is also smooth. After this reduction, the main idea in the proof of Theorem \ref{15.5.2018.2} is to construct a chain of smooth projective varieties $Z=Z_0\subseteq Z_1\subseteq\cdots\subseteq Z_{\frac{n}{2}+1}=\P^{n+1}$ where each $Z_i$ is a hypersurface of $Z_{i+1}$ given by the intersection of $Z_{i+1}$ with a very ample divisor of $\P^{n+1}$. Then using an explicit description of the coboundary map in \v{C}ech cohomology associated to the Poincar\'e residue sequence, we relate the periods of $Z_i$ with the periods of $Z_{i+1}$ (see \S\ref{1}, Proposition \ref{27.10.1}). And so, the period computation is reduced to a computation of an integral of a top form over $\P^{n+1}$, which is computed in \S\ref{1}, Corollary \ref{15.5.2018}. In \S \ref{3} we produce some applications of Theorem \ref{cycleclass} and Theorem \ref{15.5.2018.2}, getting computable formulas for the intersection of two $\frac{n}{2}$-dimensional complete intersection algebraic cycles inside $X$, in terms of their defining equations (see \S \ref{3}, Corollary \ref{4.8.2018.3}).

In Deligne's work on absolute Hodge cycles \cite{Deligne1982}, he showed that the periods of algebraic cycles belong to the field of definition of the variety and the corresponding algebraic cycle. Providing necessary conditions on the periods of Hodge cycles in order to satisfy the Hodge conjecture. Periods of algebraic cycles played a central role in the study of components of the Noether-Lefschetz locus by means of the infinitesimal variations of Hodge strucutres, leaded by Voisin \cite{voisin1988, voisin89, voisin90, voisin1991}, Green \cite{green1988, green1989}, Harris \cite{gri83III, CHM88} and many others \cite{lopez1991, kim1991, Otwinowska2003, Maclean05, kloosterman2007, Dan2017}. In 2014, Movasati reconsidered the problem of computing explicitly the periods of algebraic cycles. In \cite{GMCD-NL}, Movasati exposed several possible applications of these computations, among them a computational approach to certain special cases of variational Hodge conjecture. These ideas gave place to the computation of formulas for periods of linear cycles inside Fermat varieties appearing in \cite[Theorem 1]{MV} (these formulas can be deduced from Theorem \ref{15.5.2018.2}, see \S\ref{3}, Corollary \ref{perlinfer}). On the other hand, a parallel approach was considered by Sert\"oz in \cite{emre}, where he implemented an algorithm for approximating periods of arbitrary Hodge cycles inside hypersurfaces.

Following \cite{GMCD-NL}, we used the period formulas in \cite{MV} to handle variational Hodge conjecture for a non-complete intersection algebraic cycle inside the Fermat variety. Variational Hodge conjecture is a major conjecture proposed by Grothendieck in 1966, as a weak version of Hodge conjecture (see \cite[page 103]{gro66}). While Hodge conjecture claims that every Hodge cycle inside a smooth projective variety is an algebraic cycle. Variational Hodge conjecture claims that in all proper families of smooth projective varieties with connected base, a flat section of its de Rham cohomology bundle is an algebraic cycle at one point if and only if it is an algebraic cycle everywhere. In 1972, Bloch proved variational Hodge conjecture for deformations of algebraic cycles supported in local complete intersections which are semi-regular inside the corresponding smooth projective variety (see \cite{Bloch1972}). Semi-regularity is a strong condition, difficult to check in concrete examples (see \cite{DK2016} for a discussion about examples of semi-regular varieties). In 2003, Otwinowska considered variational Hodge conjecture for algebraic cycles inside smooth degree $d$ hypersurfaces $X$ of the projective space $\P^{n+1}$ of even dimension $n$. In this context, she proved (among several other remarkable results) that variational Hodge conjecture is satisfied for algebraic cycles supported in one $\frac{n}{2}$-dimensional complete intersection $Z$ of $\P^{n+1}$ contained in $X$, for $\deg(X)\gg 0$ (see \cite{Otwinowska2003}). An improvement of this result was presented by Dan, removing the condition on the degree (see \cite{Dan2017}). Despite Otwinowska and Dan's result, it is not known if the complete intersection subvarieties are semi-regular inside the corresponding hypersurface. The first explicit non-complete intersection algebraic cycle considered in high dimension hypersurfaces was treated in \cite[Theorem 2]{MV} with computer assistance. Several $\Z$-combinations of two linear cycles inside Fermat varieties where considered, but only some of them were proved to satisfy variational Hodge conjecture (by means of a first order approximation of the Hodge locus).

In the same spirit of \cite{MV}, we use Theorem \ref{15.5.2018.2} in order to analyze variational Hodge conjecture for cycles obtained as $\Z$-combinations of two linear cycles inside a hypersurface. We separate our analysis depending on the dimension of the intersection of the considered linear cycles. We generalize \cite[Theorem 2]{MV} (providing a theoretic proof) to arbitrary degree and dimension in the following way: 

\begin{thm}
\label{11.10.2018}
Let $X\subseteq\P^{n+1}$ be the Fermat variety of even dimension $n$ and degree $d$. Let $\P^\frac{n}{2}, \check\P^\frac{n}{2}\subseteq X$ be the two linear subvarieties such that $\P^\frac{n}{2}\cap \check\P^\frac{n}{2}=\P^m$ given by
$$
\P^{n-m}:=\{x_{n-2m}-\zeta_{2d}x_{n-2m+1}=\cdots=x_n-\zeta_{2d}x_{n+1}=0\},
$$
$$
\P^\frac{n}{2}:=\{x_0-\zeta_{2d}x_1=\cdots=x_{n-2m-2}-\zeta_{2d}x_{n-2m-1}=0\}\cap \P^{n-m},
$$
$$
\check\P^\frac{n}{2}:=\{x_0-\zeta_{2d}^{\alpha_0}x_1=\cdots=x_{n-2m-2}-\zeta_{2d}^{\alpha_{n-2m-2}}x_{n-2m-1}=0\}\cap \P^{n-m},
$$ 
where $\zeta_{2d}\in\C$ is a primitive $2d$-root of unity, and $\alpha_0,\alpha_2,\ldots,\alpha_{n-2m-2}\in \{3,5,\ldots,2d-1\}$. Then, for $m<\frac{n}{2}-\frac{d}{d-2}$, $a,b\in\Z\setminus\{0\}$ and $\delta:=a\cdot \P^\frac{n}{2}+b\cdot \check\P^{\frac{n}{2}}\in \CH^\frac{n}{2}(X)$ we have 
$$
V_{[\delta]}=V_{[\P^\frac{n}{2}]}\cap V_{[\check\P^\frac{n}{2}]},
$$
and the Hodge locus $V_{[\delta]}$ is smooth and reduced (see \S \ref{hodgelocus}, Definition \ref{5.11.2018}, for the definition of the Hodge locus). In particular, variational Hodge conjecture holds for $[\delta]\in H^n(X,\Z)\cap H^{\frac{n}{2},\frac{n}{2}}(X)$ in these cases. On the other hand, for $m\ge \frac{n}{2}-\frac{d}{d-2}$, the Zariski tangent space of $V_{[\delta]}$ has dimension strictly bigger than the dimension of $V_{[\P^\frac{n}{2}]}\cap V_{[\check\P^\frac{n}{2}]}$ (which is smooth and reduced, see \S \ref{4}, Proposition \ref{16.10.2018.2}).
\end{thm}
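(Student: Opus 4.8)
The plan is to work at the Fermat point $[F]$ in the space of degree $d$ hypersurfaces ($F=x_0^d+\cdots+x_{n+1}^d$) and reduce everything to a comparison of Zariski tangent spaces there. The inclusion $V_{[\P^{n/2}]}\cap V_{[\check\P^{n/2}]}\subseteq V_{[\delta]}$ is automatic because $\P^{n/2},\check\P^{n/2}$ are algebraic; by Proposition \ref{16.10.2018.2} the left-hand side is smooth and reduced, so if the two tangent spaces at $[F]$ coincide then $V_{[\delta]}=V_{[\P^{n/2}]}\cap V_{[\check\P^{n/2}]}$ near $[F]$, is smooth and reduced, and the flat extension of $[\delta]$ stays algebraic along it (on $V_{[\P^{n/2}]}$ the theorems of Otwinowska and Dan represent $[\P^{n/2}]$ by a complete intersection deforming $\P^{n/2}$, likewise for $\check\P^{n/2}$), giving variational Hodge conjecture. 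For the tangent spaces I use the infinitesimal variation of Hodge structure: for a primitive Hodge class $res(Q\Omega/F^{n/2+1})$ with $Q\in R^F_{(d-2)(n/2+1)}$, the Zariski tangent space of its Hodge locus at $[F]$ is $\mathrm{Ann}_{R^F}(Q)_d=\{G\in R^F_d:\ GQ\equiv 0\ (\mathrm{mod}\ J^F)\}$, where $R^F$ is the Jacobian ring, Artinian Gorenstein of socle degree $\sigma:=(d-2)(n+2)$ (Theorem \ref{08.10.6}).

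Next I compute the polynomials attached to the two linear cycles by Theorem \ref{cycleclass}. Both $\P^{n/2}$ and $\check\P^{n/2}$ are cut out by pairing $x_0,\dots,x_{n+1}$ into $n/2+1$ consecutive pairs $(x_{2j},x_{2j+1})$ via $x_{2j}-\zeta_{2d}^{\,e_j}x_{2j+1}=0$, where $e_j=1$ for $\P^{n/2}$, and for $\check\P^{n/2}$ one has $e_j=\alpha_{2j}$ for $j<r:=\frac{n}{2}-m$ and $e_j=1$ for $j\ge r$. As $\alpha_{2j}$ is odd, $\zeta_{2d}^{\,e_jd}=-1$, so $x_{2j}^d+x_{2j+1}^d=\ell_jg_j$ with $\ell_j=x_{2j}-\zeta_{2d}^{\,e_j}x_{2j+1}$ and $g_j=\sum_{k=0}^{d-1}x_{2j}^{d-1-k}(\zeta_{2d}^{\,e_j}x_{2j+1})^k$, and $F=\sum_j\ell_jg_j$. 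With $H=(\ell_0,g_0,\dots,\ell_{n/2},g_{n/2})$ the matrix $Jac(H)$ is block-diagonal in the pairs, and a short computation shows each $2\times2$ block has determinant a nonzero multiple of $\tilde g_j:=\sum_{k=0}^{d-2}x_{2j}^{d-2-k}(\zeta_{2d}^{\,e_j}x_{2j+1})^k$; hence $\det Jac(H)=c\prod_j\tilde g_j$ with $c\in\C^\times$, and $\ell_j\tilde g_j=x_{2j}^{d-1}-(\zeta_{2d}^{\,e_j}x_{2j+1})^{d-1}\in J^F$. Thus $\mathfrak I:=(I(\P^{n/2})+J^F)/J^F\subseteq\mathrm{Ann}_{R^F}(\det Jac(H))$. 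Eliminating $x_{2j}=\zeta_{2d}x_{2j+1}$ identifies $R^F/\mathfrak I\cong\C[x_1,x_3,\dots,x_{n+1}]/(x_1^{d-1},\dots,x_{n+1}^{d-1})$, Artinian Gorenstein of socle degree $\sigma/2=(d-2)(n/2+1)$; writing $\mathfrak I=\mathrm{Ann}_{R^F}(Q_0)$ for a dual socle generator $Q_0\in R^F_{\sigma/2}$ and using that $\mathfrak a\mapsto\mathrm{Ann}_{R^F}(\mathfrak a)$ is an inclusion-reversing involution, one gets $(\det Jac(H))\subseteq(Q_0)$; both are generated in degree $\sigma/2$, so $\det Jac(H)$ is a scalar multiple of $Q_0$ and $\mathrm{Ann}_{R^F}(\det Jac(H))=\mathfrak I$. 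The same applies to $\check\P^{n/2}$, giving $\check{\mathfrak I}:=(I(\check\P^{n/2})+J^F)/J^F$.

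Now set $P:=\det Jac(H)$ and $P':=\det Jac(\check H)$, $\check H$ being the analogous tuple for $\check\P^{n/2}$, so by Theorem \ref{cycleclass} the primitive part of $[\delta]$ is a nonzero multiple of $res((aP+bP')\Omega/F^{n/2+1})$. Then $T_{[F]}V_{[\delta]}=\mathrm{Ann}_{R^F}(aP+bP')_d$, while $T_{[F]}(V_{[\P^{n/2}]}\cap V_{[\check\P^{n/2}]})=\mathrm{Ann}_{R^F}(P)_d\cap\mathrm{Ann}_{R^F}(P')_d=(\mathfrak I\cap\check{\mathfrak I})_d$; the inclusion $\supseteq$ is clear. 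For $\subseteq$: if $G(aP+bP')=0$ then $aGP=-bGP'$ lies in $PR^F_d\cap P'R^F_d$. Since $(P)=\mathrm{Ann}_{R^F}(\mathfrak I)$ and $(P')=\mathrm{Ann}_{R^F}(\check{\mathfrak I})$, we get $PR^F_d\cap P'R^F_d=\mathrm{Ann}_{R^F}(\mathfrak I+\check{\mathfrak I})_{\sigma/2+d}$, of dimension $\dim(R^F/(\mathfrak I+\check{\mathfrak I}))_{\sigma/2-d}$ by Gorenstein duality. Because the $\alpha_{2j}$ are $\ne1$, $I(\P^{n/2})+I(\check\P^{n/2})=I(\P^m)$ with $\P^m=\P^{n/2}\cap\check\P^{n/2}$, so $R^F/(\mathfrak I+\check{\mathfrak I})\cong\C[y_0,\dots,y_m]/(y_0^{d-1},\dots,y_m^{d-1})$ is Artinian Gorenstein of socle degree $(m+1)(d-2)$. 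Hence $\dim(R^F/(\mathfrak I+\check{\mathfrak I}))_{\sigma/2-d}=0$ exactly when $\sigma/2-d>(m+1)(d-2)$, i.e. when $m<\frac{n}{2}-\frac{d}{d-2}$. In that range $aGP=0$, so $GP=0=GP'$ and $G\in\mathfrak I\cap\check{\mathfrak I}$; the tangent spaces agree, and the conclusions of the first paragraph apply.

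When $m\ge\frac{n}{2}-\frac{d}{d-2}$ the same computation gives $PR^F_d\cap P'R^F_d=\mathrm{Ann}_{R^F}(\mathfrak I+\check{\mathfrak I})_{\sigma/2+d}\ne0$, and I claim this forces $\mathrm{Ann}_{R^F}(aP+bP')_d\supsetneq(\mathfrak I\cap\check{\mathfrak I})_d$, hence $\dim T_{[F]}V_{[\delta]}>\dim(\mathfrak I\cap\check{\mathfrak I})_d=\dim(V_{[\P^{n/2}]}\cap V_{[\check\P^{n/2}]})$ (the last equality by Proposition \ref{16.10.2018.2}). This is the step I expect to be the main obstacle: one must exhibit $G\in R^F_d$ with $G(aP+bP')=0$ but $GP\ne0$. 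I would do this via the tensor decomposition $R^F\cong\bigotimes_{j=0}^{n/2}\C[x_{2j},x_{2j+1}]/(x_{2j}^{d-1},x_{2j+1}^{d-1})$, under which $P$ and $P'$ are pure tensors (products over $j$ of the polynomials $\tilde g_j$) differing only in the $r$ factors with $j<r$ and $\mathrm{Ann}_{R^F}(P)=(\ell_0,\dots,\ell_{n/2})$, reducing the problem to those $r$ factors; alternatively one computes $\dim\mathrm{Ann}_{R^F}(aP+bP')_d$ directly, using that $R^F/\mathrm{Ann}_{R^F}(aP+bP')$ is again Artinian Gorenstein of socle degree $\sigma/2$ (so its Hilbert function is symmetric, trading degree $d$ for degree $\sigma/2-d$), and compares it with the Mayer--Vietoris value $\dim(R^F/(\mathfrak I\cap\check{\mathfrak I}))_d=\dim(R^F/\mathfrak I)_d+\dim(R^F/\check{\mathfrak I})_d-\dim(R^F/(\mathfrak I+\check{\mathfrak I}))_d$. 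The remaining ingredients — the tangent-space identifications and the numerical dichotomy at $m=\frac{n}{2}-\frac{d}{d-2}$ — are then formal consequences of the Gorenstein structure of $R^F$, $R^F/\mathfrak I$ and $R^F/(\mathfrak I+\check{\mathfrak I})$.
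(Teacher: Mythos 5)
Your first half (the case $m<\frac{n}{2}-\frac{d}{d-2}$) is correct and takes a genuinely different route. The paper passes through Corollary~\ref{16.10.2018}, writes $P_\delta=R_1Q+R_2Q$ with $Q$ carrying the shared variables and $R_1,R_2$ the "core'' variables $x_0,\dots,x_{2r-1}$ ($r=\frac{n}{2}-m$), and reduces the equality $(J^F:P_1)_d\cap(J^F:P_2)_d=(J^F:P_1+P_2)_d$ to an explicit computation in $\C[x_0,\dots,x_{2r-1}]/(x_0^{d-1},\dots,x_{2r-1}^{d-1})$ (Proposition~\ref{caex}), which in turn is handled by a direct manipulation with the expansions of $R_i$. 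You instead observe that $(P)=\operatorname{Ann}_{R^F}(\mathfrak I)$, $(P')=\operatorname{Ann}_{R^F}(\check{\mathfrak I})$ are linkage ideals, so $PR^F_d\cap P'R^F_d=\operatorname{Ann}(\mathfrak I+\check{\mathfrak I})_{\sigma/2+d}$, whose vanishing is dictated by the socle degree $(m+1)(d-2)$ of $R^F/(\mathfrak I+\check{\mathfrak I})\cong\C[y_0,\dots,y_m]/(y_i^{d-1})$; the threshold $m<\frac{n}{2}-\frac{d}{d-2}$ drops out. That is cleaner and replaces the somewhat ad hoc computation in Proposition~\ref{caex} by pure Gorenstein duality; the identifications you need (that $\mathrm{Ann}_{R^F}(\mathfrak I)$ is principal generated in degree $\sigma/2$ by $\det\mathrm{Jac}(H)$, and the tangent-space formula $T_{0}V_{[\delta]}=(J^F:P_\delta)_d$) all check out against Theorem~\ref{cycleclass}, Corollary~\ref{16.10.2018} and Corollary~\ref{2.9.2018.2}.

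The converse ($m\ge\frac{n}{2}-\frac{d}{d-2}$) is, however, a genuine gap, and you say so yourself. The implication ``$PR^F_d\cap P'R^F_d\ne0$ forces $\operatorname{Ann}_{R^F}(aP+bP')_d\supsetneq(\mathfrak I\cap\check{\mathfrak I})_d$'' is \emph{not} formal. Writing $\psi_i(G)=GP$ resp.\ $GP'$ on $R^F_d$ and $\Sigma(u,v)=au+bv$, one gets exactly
$\dim\operatorname{Ann}(aP+bP')_d-\dim(\mathfrak I\cap\check{\mathfrak I})_d=\dim\bigl(\operatorname{im}(\psi_1,\psi_2)\cap\ker\Sigma\bigr)$,
which is \emph{bounded above} by $\dim(PR^F_d\cap P'R^F_d)$ but could a priori be $0$: you must still show that some $u\ne0$ in $PR^F_d\cap P'R^F_d$ arises as $u=G P$ with the \emph{same} $G$ also satisfying $GP'=-\tfrac{a}{b}u$, i.e.\ that the compatibility obstruction in $(R^F/(\mathfrak I+\check{\mathfrak I}))_d$ vanishes for that $u$. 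This is precisely what the paper's Proposition~\ref{caex} supplies: it produces an explicit $p$ in the core ring at the socle degree $(d-2)r$ lying in $(I:R_1+R_2)\setminus(I:R_1)$, and then multiplication by a monomial transports it into degree $d$. Your first suggested route (the tensor decomposition into $2\times2$ blocks, isolating the $r$ factors where $P,P'$ differ) is in fact the paper's approach; your second (Hilbert-function bookkeeping comparing $R^F/\operatorname{Ann}(aP+bP')$ with $R^F/(\mathfrak I\cap\check{\mathfrak I})$) could also work but would need the symmetry argument carried out carefully, since the inequality you need is not a consequence of symmetry alone. Until one of these is executed, the ``strictly bigger tangent space'' half of the theorem is unproved.
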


We remark that \cite[Theorem 2]{Maclean05} covers the case $(n,d)=(2,5)$. The main ingredient missing from \cite{MV} that allows us to prove Theorem \ref{11.10.2018} is the explicit computation of the cycle class map given in Theorem \ref{cycleclass}.
After the algebraicity of the locus of Hodge cycles proved by Cattani, Deligne and Kaplan \cite{cadeka}, we can state variational Hodge conjecture in the following local analytic format: \textit{``If $\delta_0\in H^n(X_0,\Z)\cap H^{\frac{n}{2},\frac{n}{2}}(X_0)$ is the cohomological class of an algebraic cycle, then $\delta_t\in H^n(X_t,\Z)\cap H^{\frac{n}{2},\frac{n}{2}}(X_t)$ is the cohomological class of an algebraic cycle for every $t\in V_{\delta_0}$.''} This version of variational Hodge conjecture is the one we are always referring to, in particular in Theorem \ref{11.10.2018}. Finally, by a simple argument informed by Movasati, we can deduce from Theorem \ref{11.10.2018} the following result confirming variational Hodge conjecture for combinations of linear cycles inside general hypersurfaces containing such cycles.

\begin{thm}
\label{gent}
Let $\pi:X\rightarrow T$ be the family of smooth degree $d$ hypersurfaces of $\P^{n+1}$, of even dimension $n$. Consider
$$
W_m:=\{t\in T: X_t\text{ contains two linear cycles }\P^\frac{n}{2}, \check\P^\frac{n}{2}\text{ with }\P^\frac{n}{2}\cap\check\P^\frac{n}{2}=\P^m\}.    
$$
If $m<\frac{n}{2}-\frac{d}{d-2}$, then for all $a,b\in\Z\setminus\{0\}$ variational Hodge conjecture holds for the Hodge cycle $[\delta]=a[\P^\frac{n}{2}]+b[\check\P^\frac{n}{2}]\in H^n(X_t,\Z)\cap H^{\frac{n}{2},\frac{n}{2}}(X_t)$, for $t\in W_m$ general.
\end{thm}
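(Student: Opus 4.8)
The plan is to deduce Theorem \ref{gent} from Theorem \ref{11.10.2018} by a spreading-out argument that transports the statement from Fermat hypersurfaces to a general point of $W_m$. First I would set up the relevant Hodge loci: for $t_0\in W_m$ pick two linear cycles $\P^{\frac n2},\check\P^{\frac n2}\subseteq X_{t_0}$ with $\P^{\frac n2}\cap\check\P^{\frac n2}=\P^m$, and consider the Hodge locus $V_{[\delta]}\subseteq T$ of the class $[\delta]=a[\P^{\frac n2}]+b[\check\P^{\frac n2}]$ (well-defined as an analytic germ near $t_0$ after choosing a flat continuation of the class, and algebraic by \cite{cadeka}). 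Variational Hodge conjecture for $[\delta]$ at $t_0$ amounts to showing that $V_{[\delta]}$ is contained in the locus where $\delta_t$ remains algebraic; since on $V_{[\P^{\frac n2}]}\cap V_{[\check\P^{\frac n2}]}$ each of the two linear cycles survives as an honest algebraic cycle, it suffices to show the inclusion $V_{[\delta]}\subseteq V_{[\P^{\frac n2}]}\cap V_{[\check\P^{\frac n2}]}$ of germs at $t_0$, i.e. the equality $V_{[\delta]}=V_{[\P^{\frac n2}]}\cap V_{[\check\P^{\frac n2}]}$ locally.

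The point is that this last equality is an equality of Zariski-closed subsets of $T$ (or of the incidence variety parametrising pairs (hypersurface, pair of linear cycles with prescribed intersection $\P^m$)), so it suffices to verify it at a \emph{single} point of each irreducible component of that incidence variety; moreover, since each such component contains a Fermat-type configuration — one can always move the defining equations of $\P^{\frac n2},\check\P^{\frac n2},\P^m$ by $\mathrm{PGL}_{n+2}$ and degenerate to the normal form in Theorem \ref{11.10.2018}, while keeping the intersection of dimension exactly $m$ — the equality is guaranteed at that Fermat point by Theorem \ref{11.10.2018}. Here the hypothesis $m<\frac n2-\frac d{d-2}$ is exactly what Theorem \ref{11.10.2018} needs. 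Because $V_{[\delta]}\supseteq V_{[\P^{\frac n2}]}\cap V_{[\check\P^{\frac n2}]}$ always holds (an algebraic combination of algebraic cycles is Hodge), upper semicontinuity of the dimension of the Zariski tangent space of $V_{[\delta]}$ along $W_m$, together with smoothness and reducedness of $V_{[\P^{\frac n2}]}\cap V_{[\check\P^{\frac n2}]}$ (Proposition \ref{16.10.2018.2}), forces $V_{[\delta]}=V_{[\P^{\frac n2}]}\cap V_{[\check\P^{\frac n2}]}$ at the general point $t\in W_m$, and hence variational Hodge conjecture there.

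Concretely the steps are: (i) identify the irreducible components of $W_m$ (equivalently of the incidence variety of triples of linear spaces with the prescribed dimensions inside a degree $d$ hypersurface), and check each one specialises to a configuration of the form in Theorem \ref{11.10.2018}; (ii) invoke Theorem \ref{11.10.2018} at that special point to get $V_{[\delta]}=V_{[\P^{\frac n2}]}\cap V_{[\check\P^{\frac n2}]}$ there, with the right-hand side smooth and reduced; (iii) use semicontinuity of the tangent-space dimension of the algebraic Hodge locus $V_{[\delta]}$ to propagate the equality to the general point of each component, so that $V_{[\delta]}=V_{[\P^{\frac n2}]}\cap V_{[\check\P^{\frac n2}]}$ for $t\in W_m$ general; (iv) conclude that $\delta_t$ stays algebraic on all of $V_{[\delta]}$, which is the assertion of variational Hodge conjecture. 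The main obstacle I anticipate is step (i): making sure that the locus $W_m$, defined by an \emph{exact} intersection-dimension condition $\P^{\frac n2}\cap\check\P^{\frac n2}=\P^m$ (a locally closed condition), has all its components reachable by degeneration from the explicit Fermat normal form — in particular that the automorphisms $\alpha_0,\dots,\alpha_{n-2m-2}\in\{3,5,\dots,2d-1\}$ and the choice of primitive root $\zeta_{2d}$ in Theorem \ref{11.10.2018} already exhaust all components up to $\mathrm{PGL}_{n+2}$; this is the combinatorial heart of the reduction, and is presumably where Movasati's ``simple argument'' enters.
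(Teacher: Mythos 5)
Your proposal follows essentially the same route as the paper's proof. The paper makes your semicontinuity step precise by introducing the incidence variety $\mathscr{H}$ of triples $(X_t,\P^{\frac{n}{2}},\check\P^{\frac{n}{2}})$ together with the period-matrix map $\Phi$ (regular by Theorem~\ref{15.5.2018.2}, or by \cite{cadeka}), so that $T_tV_{[\delta]}=\ker\Phi$ by Proposition~\ref{17.10.2018}; the kernel dimension is Zariski upper-semicontinuous, and Theorem~\ref{11.10.2018} shows the Fermat configuration achieves the minimal value $\dim\mathscr{H}$, whence equality propagates to a Zariski-open subset of $W_m$. One clarification on the ``obstacle'' you flag in step (i): there is nothing to check. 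The space of ordered pairs of $\frac{n}{2}$-planes in $\P^{n+1}$ meeting exactly along a $\P^m$ is a single $\mathrm{PGL}_{n+2}$-orbit, and $\mathscr{H}$ is a projective-space bundle over it, so $\mathscr{H}$ (hence $W_m$) is irreducible; the Fermat configuration sits in its unique component, and no degeneration or exhaustion of normal forms is needed.
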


\section{Artinian Gorenstein algebras}
\label{rochebday}
As part of the algebraic background we need, we will state in this section some results about Artinian Gorenstein algebras. We begin with a classical result due to Macaulay (for a proof see \cite[Theorem 6.19]{vo03}).

\begin{thm}[Macaulay \cite{mac16}]
\label{08.10.6}
Given $f_0,\ldots,f_{n+1}\in\C[x_0,\ldots,x_{n+1}]$ homogeneous polynomials with $\deg(f_i)=d_i$ and 
$$
\{f_0=\cdots=f_{n+1}=0\}=\varnothing\subseteq \P^{n+1}.
$$
Let
$$
R:=\frac{\C[x_0,\ldots,x_{n+1}]}{\langle f_0,\ldots,f_{n+1}\rangle}.
$$
Then for $\sigma:=\sum_{i=0}^{n+1}(d_i-1)$, we have that 
\begin{itemize}
\item[(i)] $\text{dim}_\C\text{ } R_\sigma=1$.
\item[(ii)] For every $0\le i\le \sigma$ the multiplication map $$R_i\times R_{\sigma-i}\rightarrow R_\sigma$$ is a perfect pairing.
\item[(iii)] $R_e=0$ for $e>\sigma$.
\end{itemize}
\end{thm}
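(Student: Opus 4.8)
The plan is to identify $R$ as a graded Artinian complete intersection and then read off (i)--(iii) from an explicit Hilbert series together with the Gorenstein property.

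First I would use the hypothesis $\{f_0=\cdots=f_{n+1}=0\}=\varnothing$ in $\P^{n+1}$ to conclude that the $f_i$ have no common zero in $\A^{n+2}$ away from the origin, so that the ideal $\langle f_0,\dots,f_{n+1}\rangle$ is primary to the irrelevant ideal $\mathfrak{m}=\langle x_0,\dots,x_{n+1}\rangle$. Hence $R$ is Artinian; and since $S:=\C[x_0,\dots,x_{n+1}]$ is Cohen--Macaulay of dimension $n+2$, the $n+2$ elements $f_0,\dots,f_{n+1}$ form a homogeneous system of parameters and therefore a regular sequence. Consequently the Koszul complex on $f_0,\dots,f_{n+1}$ is a graded free resolution of $R$ over $S$ of the shape
$$
0\to S(-D)\to\cdots\to\bigoplus_{i=0}^{n+1}S(-d_i)\to S\to R\to 0,\qquad D:=\sum_{i=0}^{n+1}d_i .
$$
From this resolution the Hilbert series of $R$ equals
$$
P_R(t)=\frac{\prod_{i=0}^{n+1}(1-t^{d_i})}{(1-t)^{n+2}}=\prod_{i=0}^{n+1}\bigl(1+t+\cdots+t^{d_i-1}\bigr),
$$
which is a palindromic polynomial of degree $\sum_{i=0}^{n+1}(d_i-1)=\sigma$ whose leading and constant coefficients are both $1$. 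This immediately yields $\dim_\C R_\sigma=1$, statement (iii) that $R_e=0$ for $e>\sigma$, and the numerical symmetry $\dim_\C R_i=\dim_\C R_{\sigma-i}$; only the perfectness of the pairing in (ii) remains.

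For (ii) I would invoke the self-duality of the Koszul complex: dualizing the resolution above into $S$ shows $\operatorname{Ext}^j_S(R,S)=0$ for $j\neq n+2$ and $\operatorname{Ext}^{n+2}_S(R,S)\cong R(D)$. Feeding this into graded local duality over $S$ (whose canonical module is $S(-(n+2))$), and using that the Artinian module $R$ coincides with its own $0$-th local cohomology, one obtains an isomorphism of graded $R$-modules $R^\vee\cong R(\sigma)$ between the graded $\C$-dual of $R$ and $R$ shifted by $\sigma=D-(n+2)$. Read in degree $i$ this is an $R$-linear isomorphism $(R_i)^*\cong R_{\sigma-i}$, hence induced by multiplication into $R_\sigma$; that is precisely the statement that $R_i\times R_{\sigma-i}\to R_\sigma\cong\C$ is a perfect pairing, and along the way it re-proves (i) and (iii). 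Equivalently, one may phrase the whole argument as: a graded Artinian complete intersection is Gorenstein, and for a graded Artinian Gorenstein algebra the socle is one-dimensional and concentrated in a single degree, which the Hilbert series above pins down to be $\sigma$.

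The main obstacle is exactly this Gorenstein step — producing and correctly normalizing the isomorphism $R^\vee\cong R(\sigma)$. Everything preceding it (regular sequence, Koszul resolution, Hilbert series) is routine; the delicate point is bookkeeping the graded twists through Koszul self-duality and local duality so that the socle degree comes out to be $\sigma=\sum_{i=0}^{n+1}(d_i-1)$ and so that the perfect pairing one obtains is the honest multiplication map $R_i\times R_{\sigma-i}\to R_\sigma$ rather than a twist of it.
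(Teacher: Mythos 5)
The paper does not actually prove this statement: it cites Voisin's book (Theorem~6.19 of \emph{Hodge Theory and Complex Algebraic Geometry~II}) for the proof and then uses the result freely. Your argument is therefore a self-contained substitute rather than a reconstruction of an in-text proof, and it is correct. The chain of reasoning --- the vanishing in $\P^{n+1}$ makes the ideal $\mathfrak m$-primary, so $f_0,\dots,f_{n+1}$ is a homogeneous system of parameters, hence a regular sequence in the Cohen--Macaulay ring $S$; the Koszul complex then resolves $R$, the Hilbert series $\prod_i(1+t+\cdots+t^{d_i-1})$ gives (i), (iii) and the numerical symmetry $\dim R_i=\dim R_{\sigma-i}$; Koszul self-duality gives $\operatorname{Ext}^{n+2}_S(R,S)\cong R(D)$ and local duality over $S$ with $\omega_S=S(-(n+2))$ then gives the $R$-module isomorphism $R^\vee\cong R(\sigma)$ --- is the standard commutative-algebra route. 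The final step is handled properly: an $R$-linear isomorphism $R(\sigma)\to R^\vee$ is determined by the image $\xi$ of $1\in R_0$, which is a nonzero functional on $R_\sigma$, and $R$-linearity forces the degree-$(-i)$ component to be $r\mapsto\bigl(t\mapsto\xi(rt)\bigr)$, so bijectivity in each degree is exactly perfectness of the multiplication pairing $R_i\times R_{\sigma-i}\to R_\sigma$.

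Two small points worth being explicit about if you wrote this out in full. First, you implicitly use $d_i\geq 1$; this is forced by the hypothesis (a degree-$0$ $f_i$ would either be $0$, contradicting empty common zero locus, or a unit, making $R=0$) but deserves a sentence. Second, the claim that the isomorphism $\operatorname{Ext}^{n+2}_S(R,S)\cong R(D)$ coming from Koszul self-duality is $R$-linear (not just $S$-linear) is what makes the final degree-by-degree argument work; it holds because both sides are annihilated by $I$, so $S$-linearity is the same as $R$-linearity, but it is the kind of thing that should be said. Voisin's exposition reaches the same Gorenstein duality with the grading $\sigma=D-(n+2)$ by a closely related route (Koszul resolution plus duality/residues), so your proof is in the same spirit as the one the paper points to; you have not found a gap, and you have correctly identified that the only genuinely delicate point is the normalization of twists in the duality step.
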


\begin{dfn}
Let $n\in \N$, and $I\subseteq \C[x_0,\ldots,x_{n+1}]$ an ideal. We say that the quotient ring $R:=\C[x_0,\ldots,x_{n+1}]/I$ is an \textit{Artinian Gorenstein algebra} if it satisfies items (i), (ii), (iii) of Macaulay Theorem \ref{08.10.6} for some $\sigma\in \N$. We say $\sigma$ is the \textit{socle of $R$} and denote it $\sigma=soc(R)$.
\end{dfn}

\begin{notation}
Despite the Artinian Gorenstein property is reserved for algebras, we will also say that \textit{$I$ is Artinian Gorenstein of socle $\sigma$}, when $R=\C[x_0,\ldots,x_{n+1}]/I$ is.
\end{notation}

\begin{rmk}
An elementary observation is that if $I$ is Artinian Gorenstein of socle $\sigma$, and $P\in \C[x_0,\ldots,x_{n+1}]_\mu\setminus I_\mu$, then the quotient ideal
$$
(I:P):=\{Q\in \C[x_0,\ldots,x_{n+1}]: PQ\in I\},
$$ 
is Artinian Gorenstein of socle $\sigma-\mu$. Is also elementary that if $I_1\subseteq I_2$ are two Artinian Gorenstein ideals of the same socle, then $I_1=I_2$.
\end{rmk}

We end this section with a proposition we will use in the proof of Theorem \ref{11.10.2018}.

\begin{prop}
\label{caex}
Consider the ideal $I:=\langle x_0^{d-1},\ldots,x_{2r-1}^{d-1}\rangle\subseteq \C[x_0,\ldots,x_{2r-1}]$. Let $d\ge 3$, and $\beta_1,\beta_2, c_1,c_2\in \C^\times$ with $\beta_1\neq \beta_2$. For $i=1,2$, define 
$$
R_i:=c_i\cdot\prod_{j=1}^r\frac{(x_{2j-2}^{d-1}-(\beta_ix_{2j-1})^{d-1})}{(x_{2j-2}-\beta_ix_{2j-1})}. 
$$
Then 
\begin{equation}
\label{3.12.2018}
(I:R_1)_e\cap (I:R_2)_e=(I:R_1+R_2)_e,    
\end{equation}
if and only if $e\neq (d-2)\cdot r$.
\end{prop}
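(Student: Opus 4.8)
The plan is to work entirely inside the Artinian Gorenstein algebra $R := \C[x_0,\dots,x_{2r-1}]/I$, whose socle is $\sigma = 2r(d-2)$ by Macaulay's Theorem \ref{08.10.6} (each generator $x_i^{d-1}$ has degree $d-1$, so $\sigma = \sum_{i=0}^{2r-1}(d-2) = 2r(d-2)$). Observe that $R$ is a tensor product of the $r$ one-variable-pair Gorenstein algebras $A_j := \C[x_{2j-2},x_{2j-1}]/\langle x_{2j-2}^{d-1},x_{2j-1}^{d-1}\rangle$, each of socle $2(d-2)$, and that the factor $\frac{x_{2j-2}^{d-1}-(\beta x_{2j-1})^{d-1}}{x_{2j-2}-\beta x_{2j-1}} = \sum_{k=0}^{d-2} \beta^k x_{2j-2}^{d-2-k} x_{2j-1}^k$ is, up to scalar, precisely the socle generator of $A_j$ \emph{evaluated in the direction $\beta$}: it is the image in $A_j$ of the degree-$(d-2)$ form dual to evaluation-at-$(1,\beta)$ under the Macaulay pairing. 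So $R_1$ and $R_2$ are (up to the nonzero constants $c_1,c_2$) products of such ``point evaluation'' elements, one per factor, at the two tuples of directions $\underline\beta_1 = (\beta_1,\dots,\beta_1)$ and $\underline\beta_2 = (\beta_2,\dots,\beta_2)$ with $\beta_1 \neq \beta_2$.

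The key algebraic fact I would use is the elementary remark preceding the proposition: if $I$ is Artinian Gorenstein of socle $\sigma$ and $P \notin I$ is homogeneous of degree $\mu$, then $(I:P)$ is Artinian Gorenstein of socle $\sigma - \mu$. Here $\deg R_i = r(d-2)$, so both $(I:R_1)$ and $(I:R_2)$ are Artinian Gorenstein of socle $r(d-2) =: \tau$. Now I claim $R_1 + R_2 \notin I$ exactly because $\beta_1 \neq \beta_2$ (the degree-$r(d-2)$ elements $R_1,R_2 \in R$ are linearly independent: in each factor $A_j$, the elements $\sum_k \beta_1^k x_{2j-2}^{d-2-k}x_{2j-1}^k$ and $\sum_k \beta_2^k(\cdots)$ are linearly independent for $\beta_1\neq\beta_2$, and a tensor product of pairwise-independent nonzero vectors stays nonzero; moreover $R_1+R_2\ne 0$ since, say, the coefficient of $x_0^{d-2}\cdots x_{2r-2}^{d-2}$ is $c_1+c_2$ while another monomial's coefficient is $c_1\beta_1^{?}+c_2\beta_2^{?}$, and these cannot all vanish when $\beta_1\neq\beta_2$). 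Hence $(I:R_1+R_2)$ is also Artinian Gorenstein of socle $\tau$. Since $(I:R_1+R_2) \subseteq (I:R_1)\cap(I:R_2)$ always (if $Q$ kills $R_1+R_2$ then... no — this inclusion is false in general), I instead use the reverse: $(I:R_1)\cap (I:R_2) = (I : \langle R_1,R_2\rangle) \subseteq (I:R_1+R_2)$. By the Gorenstein duality, for a homogeneous form $G\notin I$ of degree $\mu$, the ideal $(I:G)_e$ has codimension in $\C[x]_e$ equal to $\dim_\C\{$ degree-$e$ forms $P$ with $PG\notin I\}$, which by the perfect pairing $R_e \times R_{\sigma-e}\to R_\sigma$ equals $\dim_\C (G\cdot R_{?})$; concretely $\dim_\C R_e/(I:G)_e = \dim_\C (G\cdot R)_{e+\mu}$.

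Putting this together, the identity \eqref{3.12.2018} holds at degree $e$ if and only if $(G\cdot R)_{e+\mu} $ has the same dimension for $G=R_1+R_2$ as for the ``sum'' — more precisely, $(I:R_1)_e\cap (I:R_2)_e = (I:R_1+R_2)_e$ fails precisely when there is a degree-$e$ form $P$ with $P(R_1+R_2)\in I$ but $PR_1\notin I$ (equivalently $PR_2\notin I$). Translating through the factorization $R = A_1\otimes\cdots\otimes A_r$ and the product structure $R_i = c_i\, \xi_1^{(i)}\otimes\cdots\otimes\xi_r^{(i)}$ with $\xi_j^{(i)}$ the $\beta_i$-evaluator in $A_j$, the condition $P\cdot\xi_j^{(i)} = 0$ in $A_j$ means $P$ lies in the (codimension-one-at-top) annihilator; the crux is that in each $A_j$, the line spanned by $\xi_j^{(1)}$ and the line spanned by $\xi_j^{(2)}$ together with their sum line behave like three distinct points on $\P^1$, and a single degree-$k$ form vanishing at $\xi_j^{(1)}+\xi_j^{(2)}$ but not at $\xi_j^{(1)}$ exists iff $k$ is small enough. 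I expect the main obstacle to be the bookkeeping of this tensor decomposition: one must show that $P(R_1+R_2)\in I$ with $PR_1\notin I$ forces, in the socle-pairing dual picture, that $P$'s dual apolar form vanishes to the right order at the three collinear points, and count that the threshold is exactly $e+r(d-2) > \sigma - ?$, i.e. $e = (d-2)r$ — the self-dual middle degree where the socle pairing forces an extra coincidence. I would organize the degree count so that $e\neq (d-2)r$ splits into $e<(d-2)r$ (where $(I:R_i)_e$ is too big to distinguish $R_1,R_2$, and a Macaulay-pairing dimension count gives equality) and $e>(d-2)r$ (handled by the dual statement applied to the complementary degree $\sigma - e - 2\mu$), while at $e=(d-2)r$ one exhibits an explicit $P$ — built as a product over $j$ of the unique-up-to-scalar degree-$(d-2)$ form in $A_j$ killing $\xi_j^{(1)}+\xi_j^{(2)}$ — witnessing strict inclusion.
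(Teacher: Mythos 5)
Your plan correctly sets up the Artinian Gorenstein framework --- socle $2r(d-2)$ for $R=\C[x_0,\dots,x_{2r-1}]/I$, socle $r(d-2)$ for each of $(I:R_1)$, $(I:R_2)$, $(I:R_1+R_2)$, and the inclusion $(I:R_1)_e\cap(I:R_2)_e\subseteq(I:R_1+R_2)_e$ --- which matches the skeleton of the paper's proof. But both substantive halves of the argument are only sketched, and each sketch has a flaw. For the strict inclusion at $e=(d-2)r$, you propose the witness $P=\prod_j p_j$ where $p_j$ annihilates $\xi_j^{(1)}+\xi_j^{(2)}$ in $A_j$; but this shows $P$ kills $\prod_j(\xi_j^{(1)}+\xi_j^{(2)})$, which for $r>1$ is \emph{not} $R_1+R_2=c_1\prod_j\xi_j^{(1)}+c_2\prod_j\xi_j^{(2)}$ (the product of sums has cross terms mixing the two tuples, the sum of products does not). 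The paper sidesteps this by a clean abstract argument: equality at all degrees would give $(I:R_1+R_2)\subseteq(I:R_1)$, hence by the equal-socle remark $(I:R_1)=(I:R_1+R_2)=(I:R_2)$, and Gorenstein duality then forces $R_1$ and $R_2$ to be proportional, contradicting $\beta_1\neq\beta_2$. Also, for $e>(d-2)r$ the equality is immediate simply because $e$ exceeds the common socle, so all three colon ideals equal $\C[x_0,\dots,x_{2r-1}]_e$; your proposed dual statement at degree $\sigma-e-2\mu=-e$ is not available, as that degree is negative.

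The real content is the containment $(I:R_1+R_2)_e\subseteq(I:R_1)_e\cap(I:R_2)_e$ for $e<(d-2)r$, which you dispose of with ``a Macaulay-pairing dimension count gives equality'' without performing the count --- and the count is not obviously tractable, since it would require controlling $\dim((R_1+R_2)R)_{e+\mu}$ against $\dim(R_1R)_{e+\mu}+\dim(R_2R)_{e+\mu}-\dim(R_1R\cap R_2R)_{e+\mu}$ at every such $e$. The paper proceeds differently and more concretely: it first reduces any failure at $e<(d-2)r$ to a failure at $e=(d-2)r-1$, by taking $p\in(I:R_1+R_2)_e\setminus(I:R_1)_e$, using the perfect pairing of the socle-$(d-2)r$ ideal $(I:R_1)$ to produce a monomial $x^i$ of degree $(d-2)r-e$ with $x^ip\notin(I:R_1)$, and then dividing $x^i$ by one of its variables. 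At $e=(d-2)r-1$ it writes $p$ explicitly in near-socle form $\sum_{k\text{ even}}\sum_l x_k^lx_{k+1}^{d-3-l}p_{k,l}$, reduces $pR_i$ modulo $I$ to two linear relations in the scalars $a_{k,l,i}$, and uses $\beta_1\neq\beta_2$ to force both relations to vanish separately. Without that reduction step and the near-socle computation (or a worked-out replacement for the dimension count), the core of the proposition is left unproved.
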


\begin{proof}
First of all, note that $(I:R_1)$, $(I:R_2)$ and $(I:R_1+R_2)$ are Artinian Gorenstein ideals of socle $(d-2)\cdot r$. In consequence,
$$
(I:R_1)\cap (I:R_2)\neq (I:R_1+R_2).
$$
Otherwise, we would have $(I:R_1+R_2)\subseteq (I:R_1)$, which implies $$(I:R_1)=(I:R_1+R_2)=(I:R_2),$$ a contradiction. Therefore, in order to prove the proposition, it is enough to prove \eqref{3.12.2018} for $e\neq (d-2)\cdot r$. If $e>(d-2)\cdot r$, the equality \eqref{3.12.2018} is trivial since $(d-2)\cdot r$ is the socle of the three ideals. If $e<(d-2)\cdot r$, we claim \eqref{3.12.2018} reduces to the case $e=(d-2)\cdot r-1$. In fact, if we assume \eqref{3.12.2018} fails for some $e< (d-2)\cdot r$, we can choose 
\begin{equation}
\label{obvio1}
p\in (I:R_1+R_2)_{e}\setminus (I:R_1)_{e}.
\end{equation}
Since $(I:R_1)$ is Artinian Gorenstein of socle $(d-2)\cdot r$, the perfect pairing property implies that we can find a degree $(d-2)\cdot r-e$ monomial $x^i=x_0^{i_0}\cdots x_{2r-1}^{i_{2r-1}}$ such that 
\begin{equation}
\label{obvio2}
x^i\cdot p\in (I:R_1+R_2)_{(d-2)\cdot r}\setminus (I:R_1)_{(d-2)\cdot r}.
\end{equation}
Since $\deg (x^i)>0$, there exist some $i_j>0$, then \eqref{obvio1} and \eqref{obvio2} imply that
$$
\frac{x^i}{x_j}\cdot p\in (I:R_1+R_2)_{(d-2)\cdot r-1}\setminus (I:R_1)_{(d-2)\cdot r-1},
$$
and so \eqref{3.12.2018} would fail for $e=(d-2)\cdot r-1$, as claimed. Therefore, we just consider the case $e=(d-2)\cdot r-1$. It is enough to show that $(I:R_1+R_2)_e\subseteq (I:R_1)_e\cap (I:R_2)_e$.
Take $p\in (I:R_1+R_2)_e$. Without loss of generality we may assume it can be written as
$$
p=\sum_{k\text{ even}}\sum_{l=0}^{d-3}x_k^lx_{k+1}^{d-3-l}p_{k,l},
$$
where each $p_{k,l}$ does not depend on $x_k$ and $x_{k+1}$, and is a $\C$-linear combination of monomials of the form $x_0^{i_0}\cdots x_{k-1}^{i_{k-1}}x_{k+2}^{i_{k+2}}\cdots x_{2r-1}^{i_{2r-1}}$ with $i_{2j-2}+i_{2j-1}=d-2$, for all $j\in\{1,\ldots, r\}\setminus\{\frac{k}{2}+1\}$.
For every $k$ and $l$, and $i=1,2$, there exist a constant $a_{k,l, i}\in \C$ such that 
$$
p_{k,l}\frac{R_i}{(x_k^{d-2}+x_k^{d-3}(\beta_ix_{k+1})+\cdots+(\beta_ix_{k+1})^{d-2})}\equiv a_{k,l,i}\frac{(x_0\cdots x_{2r-1})^{d-2}}{(x_kx_{k+1})^{d-2}},
$$
modulo $\langle x_0^{d-1},\ldots,x_{k-1}^{d-1},x_{k+2}^{d-1},\ldots,x_{2r-1}^{d-1}\rangle$.
Then 
$$
pR_i\equiv (x_0\cdots x_{2r-1})^{d-2}\sum_{k\text{ even}}\left(\frac{1}{x_k}\sum_{l=0}^{d-3}a_{k,l,i}\beta_i^{l+1}+\frac{1}{x_{k+1}}\sum_{l=0}^{d-3}a_{k,l,i}\beta_i^l\right),
$$
modulo $I$. Since $p\cdot (R_1+R_2)\in I$ we conclude that
$$
\sum_{l=0}^{d-3}a_{k,l,1}\beta_1^{l+1}+\sum_{l=0}^{d-3}a_{k,l,2}\beta_2^{l+1}=\sum_{l=0}^{d-3}a_{k,l,1}\beta_1^l+\sum_{l=0}^{d-3}a_{k,l,2}\beta_2^l=0.
$$
Since $\beta_1\neq \beta_2$, this implies
$$
\sum_{l=0}^{d-3}a_{k,l,1}\beta_1^l=\sum_{l=0}^{d-3}a_{k,l,2}\beta_2^l=0,
$$
and so $pR_i\in I$ for $i=1,2$.
\end{proof}

\section{Cycle class map and periods}
Let us explain what we mean by periods of algebraic cycles inside smooth hypersurfaces.
Let $X$ be any smooth projective variety of dimension $n$, and $Z\in \CH^k(X)$ a codimension $k$ algebraic cycle of $X$. The \textit{cycle class map} can be factored as
$$
[\cdot]: \CH^k(X)\xrightarrow{\eta} H^{2n-2k}_\dR(X)^*\simeq H^{2k}_\dR(X)
$$
where the second map is given by the perfect pairing in De Rham cohomology induced by the integration over $X$ of the wedge product (divided by $(2\pi\sqrt{-1})^n$), and the former map corresponds to
$$
\eta_Z(\omega):=\frac{1}{(2\pi\sqrt{-1})^{n-k}}\int_Z\omega\in\C, \hspace{5mm}\forall \omega\in H^{2n-2k}_\dR(X).
$$
Note that for $k=1$ the cycle class map corresponds to the first Chern class. 

\begin{dfn}
Given an algebraic cycle $Z\in \CH^k(X)$, we say that the complex numbers $\eta_Z(\omega)$ are the \textit{periods of $Z$}, for all $\omega\in H^{2n-2k}_\dR(X)$.
\end{dfn}

\begin{rmk}
In general, for $X$ any smooth projective variety we have natural maps $H^{2k}(X,\Z)\rightarrow H^{2k}(X,\Q)\hookrightarrow H^{2k}(X,\C)\simeq H^{2k}_\dR(X)$. In spite the first map $H^{2k}(X,\Z)\rightarrow H^{2k}(X,\Q)$ is not in general injective, we will always denote by $H^{2k}(X,\Z)$ the cohomology with $\Z$-coefficients modulo torsion. Thus we will identify them without further mention as a chain of abelian groups
$$
H^{2k}(X,\Z)\subseteq H^{2k}(X,\Q)\subseteq H^{2k}(X,\C)=H^{2k}_\dR(X).
$$
Under this identification we will say that some $\omega\in H^{2k}_\dR(X)$ is an \textit{integral} (respectively \textit{rational}) \textit{class}, denoted $\omega\in H^{2k}_\dR(X)\cap H^{2k}(X,\Z)$ (respectively $\omega\in H^{2k}_\dR(X)\cap H^{2k}(X,\Q)$), if it only has integral (respectively rational) periods over $H_{2k}(X,\Z)$, i.e.
$$
\frac{1}{(2\pi\sqrt{-1})^k}\int_\delta \omega\in \Z \ , \ \ \ \forall \delta\in H_{2k}(X,\Z).
$$
\end{rmk}

Recalling Griffiths' work \cite{gr69}, in the case $X=\{F=0\}\subseteq\P^{n+1}$ is a smooth hypersurface of even dimension $n$ given by a homogeneous polynomial with $\deg F=d$, each piece of the Hodge filtration of $H^n_\dR(X)_{\prim}$ is generated by the differential forms 
$$
\omega_P:=\res\left(\frac{P\Omega}{F^{q+1}}\right)\in F^{n-q}H^n_\dR(X)_{\prim},
$$
for $P\in \C[x_0,\ldots,x_{n+1}]_{d(q+1)-n-2}$, where  $\Omega:=\iota_{\sum_{i=0}^{n+1}x_i\frac{\partial}{\partial x_i}}(dx_0\wedge\cdots\wedge dx_{n+1})=\sum_{i=0}^{n+1}(-1)^i x_i\widehat{dx_i},$
and $\res: H^{n+1}_\dR(\P^{n+1}\setminus X)\rightarrow H^n_\dR(X)$ is the \textit{residue map}.

\begin{notation}
\label{22.10.2018.2}
Whenever we are considering a set of 1-forms $\{y_i: i=1,\ldots,k\}$ we will use the notation $$\widehat{y_i}:=y_1\wedge \cdots\widehat{y_i}\cdots\wedge y_k.$$ This notation will be highly used in \S \ref{2}.
\end{notation}
We are interested in computing the periods of all $\frac{n}{2}$-dimensional algebraic cycles $Z\subseteq X$. Notice that, since $Z$ is a projective variety of positive dimension, it intersects every divisor of $X$, so it is impossible to find an affine chart of $X$ where to compute the periods of $Z$. Since we are integrating over an algebraic cycle (consequently a Hodge cycle) we just care about the $(\frac{n}{2},\frac{n}{2})$-part of $\omega_P$. Thus, we will fix $q=\frac{n}{2}$, and we will work with $\omega_P$ as an element of the quotient $F^{\frac{n}{2}}H^n_\dR(X)/F^{\frac{n}{2}+1}H^n_\dR(X)\simeq H^{\frac{n}{2},\frac{n}{2}}(X)\simeq H^\frac{n}{2}(X,\Omega_X^{\frac{n}{2}})$. After Carlson-Griffiths' work \cite[page 7]{CarlsonGriffiths1980}, we know
\begin{equation}
\label{18.9.2018}    
(\omega_P)^{\frac{n}{2},\frac{n}{2}}=\frac{1}{\frac{n}{2}!}\left\{\frac{P\Omega_J}{F_J}\right\}_{|J|=\frac{n}{2}}\in H^\frac{n}{2}(\U,\Omega_X^\frac{n}{2}).
\end{equation}
Where $\U$ is the \textit{Jacobian covering of $X$}. For $J=(j_0,\ldots,j_\frac{n}{2})$, $F_J:=F_{j_0}\cdots F_{j_\frac{n}{2}},$ where $F_i:=\frac{\partial F}{\partial x_i}$ for every $i=0,\ldots,n+1$, and 
\begin{equation}
\label{22.10.2018}
\Omega_J:=\iota_{\frac{\partial}{\partial x_{j_\frac{n}{2}}}}(\cdots\iota_{\frac{\partial}{\partial x_{j_0}}}(\Omega)\cdots)=(-1)^{j_0+\cdots+j_{\frac{n}{2}}+{\frac{n}{2}+2\choose 2}}\sum_{l=0}^{\frac{n}{2}}(-1)^lx_{k_l}\widehat{dx_{k_l}},    
\end{equation}
for $(k_0,\ldots,k_{\frac{n}{2}-l})$ the multi-index obtained from $(0,1,\ldots,n+1)$ by removing the entries of $J$. We will usually write $(\omega_P)^{\frac{n}{2},\frac{n}{2}}$ in \v{C}ech cohomology as in \eqref{18.9.2018}, but we will denote the period by abuse of notation as $\int_Z \omega_P\in \C$, letting it be understood that we are working under the identifications $F^\frac{n}{2}H^n_\dR(X)/F^{\frac{n}{2}+1}H^n_\dR(X)\simeq H^\frac{n}{2}(\U,\Omega_X^\frac{n}{2})\simeq H^{\frac{n}{2},\frac{n}{2}}(X)\subseteq H^n_\dR(X)$.

\section{Preliminaries on periods}
\label{1}
In this section we prove some preliminary results about periods. We begin by computing periods of top forms over the projective space $\P^{n+1}$. By a top form we mean an element of $H^{n+1,n+1}(\P^{n+1})$ seen as an element of the \v{C}ech cohomology group $H^{n+1}(\U,\Omega_{\P^{n+1}}^{n+1})$ with respect to some affine open cover $\U$ of $\P^{n+1}$.

\begin{prop}[Periods of top forms over the projective space]
\label{08.10.5}
Let $l>0$, and consider a collection of degree $l$ homogeneous polynomials $f_0,\ldots,f_{n+1}\in \C[x_0,\ldots,x_{n+1}]_{l}$, such that $$
\{f_0=\cdots=f_{n+1}=0\}=\varnothing\subseteq \P^{n+1}.
$$
They define the finite morphism $f:\P^{n+1}\rightarrow\P^{n+1}$ given by
$$
f(x_0:\cdots:x_{n+1}):=(f_0:\cdots:f_{n+1}).
$$
Let $\U_f=\{V_i\}_{i=0}^{n+1}$ be the open covering associated to $f$, i.e. $V_i=\{f_i\neq 0\}$. Then the top form 
$$
\frac{\Omega_f}{f_0\cdots f_{n+1}}:=\frac{\sum_{i=0}^{n+1}(-1)^if_i\widehat{df_i}}{f_0\cdots f_{n+1}}\in H^{n+1}(\U_f,\Omega_{\P^{n+1}}^{n+1}),
$$
has period 
$$
\int_{\P^{n+1}}\frac{\Omega_f}{f_0\cdots f_{n+1}}=l^{n+1}\cdot(-1)^{n+2\choose 2}(2\pi\sqrt{-1})^{n+1}.
$$
\end{prop}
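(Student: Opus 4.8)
The plan is to compute the period $\int_{\P^{n+1}} \frac{\Omega_f}{f_0\cdots f_{n+1}}$ by exploiting the finiteness of the morphism $f\colon \P^{n+1}\to\P^{n+1}$ and reducing to the known period of the standard top form with respect to the standard affine cover. First I would pin down the normalization in the base copy of $\P^{n+1}$: if $y_0,\dots,y_{n+1}$ are the homogeneous coordinates on the target and $\mathcal{U}=\{U_i=\{y_i\neq 0\}\}$ is the standard cover, then the \v{C}ech class $\frac{\Omega_y}{y_0\cdots y_{n+1}}$, where $\Omega_y=\sum_i(-1)^i y_i\widehat{dy_i}$, generates $H^{n+1}(\mathcal{U},\Omega^{n+1}_{\P^{n+1}})\cong H^{n+1,n+1}(\P^{n+1})\cong\C$, and its period is $(-1)^{\binom{n+2}{2}}(2\pi\sqrt{-1})^{n+1}$. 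This is the $l=1$, $f_i=x_i$ case; it should follow from a direct \v{C}ech-to-de-Rham computation (or be quoted from the literature on Griffiths residues / Carlson-Griffiths), tracking carefully the sign convention $(-1)^{\binom{n+2}{2}}$ coming from the contraction formula \eqref{22.10.2018}.

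Next, the key observation is that $f^*\left(\frac{\Omega_y}{y_0\cdots y_{n+1}}\right)$ is exactly the class $\frac{\Omega_f}{f_0\cdots f_{n+1}}$ in $H^{n+1}(\U_f,\Omega^{n+1}_{\P^{n+1}})$: pulling back $y_i\mapsto f_i$ sends $dy_i\mapsto df_i$, hence $\Omega_y\mapsto \Omega_f$, and the open cover $\U_f=\{V_i=\{f_i\neq 0\}\}=\{f^{-1}(U_i)\}$ is precisely the preimage cover, so the pullback is compatible on the level of \v{C}ech cochains. Then by functoriality of integration under proper pushforward (the projection formula) we get
$$
\int_{\P^{n+1}}\frac{\Omega_f}{f_0\cdots f_{n+1}}=\int_{\P^{n+1}} f^*\left(\frac{\Omega_y}{y_0\cdots y_{n+1}}\right)=\deg(f)\cdot\int_{\P^{n+1}}\frac{\Omega_y}{y_0\cdots y_{n+1}}.
$$
Finally, since $f$ is given by degree $l$ forms with empty common zero locus, it is a finite morphism of degree $l^{n+1}$ (the number of preimages of a general point, computed via B\'ezout, or equivalently $f^*\mathcal{O}(1)=\mathcal{O}(l)$ so $\deg f=l^{n+1}$). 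Combining gives $\int_{\P^{n+1}}\frac{\Omega_f}{f_0\cdots f_{n+1}}=l^{n+1}\cdot(-1)^{\binom{n+2}{2}}(2\pi\sqrt{-1})^{n+1}$, which is the claim.

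The main obstacle I anticipate is the careful bookkeeping at the \v{C}ech level: verifying that the \v{C}ech class $\frac{\Omega_f}{f_0\cdots f_{n+1}}$ with respect to $\U_f$ genuinely equals $f^*$ of the standard generator, including that $f^*$ respects the \v{C}ech differential and the identification $H^{n+1}(\U_f,\Omega^{n+1})\cong H^{n+1}_{\dR}(\P^{n+1})$, and nailing down the orientation/sign so that $(-1)^{\binom{n+2}{2}}$ (rather than its reciprocal or a shifted power) comes out. An alternative, avoiding \v{C}ech cohomology on the nose, is to note that $\frac{\Omega_f}{f_0\cdots f_{n+1}}$ represents a $(n+1,n+1)$-form whose de Rham/Dolbeault class is the pullback of the standard one, integrate the standard top form against the Fubini-Study volume form (or use that it represents $(-1)^{\binom{n+2}{2}}(2\pi\sqrt{-1})^{n+1}$ times the fundamental class), and conclude via the degree formula for pullbacks along finite maps; in either approach the degree-$l^{n+1}$ computation for $f$ and the sign normalization for the base case are the two technical points to get right.
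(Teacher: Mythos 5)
Your proposal follows essentially the same route as the paper: reduce to the base case $l=1$, $f_i=x_i$ (the standard top form $\Omega/(x_0\cdots x_{n+1})$ with respect to the standard cover), then pull back by $f$ and multiply by $\deg(f)=l^{n+1}$ via B\'ezout. The paper carries out the base-case computation you defer (via a partition of unity subordinate to the standard cover and repeated applications of Stokes' theorem, landing on a torus integral), but the structure and all the key points — the pullback relation $f^*\Omega_y=\Omega_f$, compatibility with the covers, the degree count, and the sign $(-1)^{\binom{n+2}{2}}$ coming from the Carlson--Griffiths total-complex convention — are the same.
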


\begin{proof} The form $\frac{\Omega}{x_0\cdots x_{n+1}}\in H^{n+1}(\P^{n+1},\Omega_{\P^{n+1}}^{n+1})$ corresponds to a global top form $\omega\in H^{2n+2}_\dR(\P^{n+1})$. We determine this element via the natural isomorphism in hypercohomology $$H^{n+1}(\P^{n+1},\Omega_{\P^{n+1}}^{n+1})\simeq \Hip^{2n+2}(\P^{n+1},\Omega_{\P^{n+1}}^\bullet)\simeq \Hip^{2n+2}(\P^{n+1},\Omega_{(\P^{n+1})^\infty}^\bullet)\simeq H^{2n+2}_\dR(\P^{n+1}),$$
where $\Omega_{(\P^{n+1})^\infty}^k$ denotes the sheaf of $\mathcal{C}^\infty$ differential $k$-forms over $\P^{n+1}$. Let $\{a_i\}_{i=0}^{n+1}$ be a partition of unity subordinated to the standard covering $\{U_i\}_{i=0}^{n+1}$ of $\P^{n+1}$. Computing $\omega$ in terms of this partition of unity, we see that $$\text{Supp }\omega\subseteq U_0\cap\cdots\cap U_{n+1}.$$ In fact, taking the standard coordinates of $U_0$ given by $(z_1,\ldots,z_{n+1})=(\frac{x_1}{x_0},\ldots,\frac{x_{n+1}}{x_0})\in\C^{n+1}$ we can write 
$$
\int_{\P^{n+1}}\omega=(n+1)!(-1)^{n+1}\int_{\C^{n+1}}da_1\wedge\cdots\wedge da_{n+1}\wedge \frac{dz_1}{z_1}\wedge\cdots\wedge\frac{dz_{n+1}}{z_{n+1}}.
$$
Furthermore, we can assume that $a_1,\ldots,a_{n+1}$ are $\mathcal{C}^\infty$ functions defined in $\C^{n+1}$ such that
$$
a_i =
\left\{
	\begin{array}{ll}
		0  & \mbox{if } |z_i|\leq 1 \\
		1 & \mbox{if } |z_i|\geq 2, |z_j|\leq 1 \forall j\in \{1,\ldots,n+1\}\setminus \{i\}
	\end{array}
\right.
$$
and 
$$
a_1+\cdots+a_{n+1}=1\text{ if }\exists j\in\{1,\ldots,n+1\}: |z_j|\geq 2.
$$
Applying Stokes theorem several times we obtain 
$$
\int_{\P^{n+1}}\frac{\Omega}{x_0\cdots x_{n+1}}=(-1)^{n+2\choose 2}\int_{\mathbb{T}^{n+1}}\frac{dz_1}{z_1}\wedge\cdots\wedge\frac{dz_{n+1}}{z_{n+1}}=(-1)^{n+2\choose 2}(2\pi\sqrt{-1})^{n+1}.
$$
Pulling back this form by $f$, it follows that 
$$
\displaystyle\int_{\P^{n+1}}\frac{\Omega_f}{f_0\cdots f_{n+1}}=\deg(f)\cdot\displaystyle\int_{\P^{n+1}}\frac{\Omega}{x_0\cdots x_{n+1}}=\deg(f)\cdot(-1)^{n+2\choose 2}(2\pi\sqrt{-1})^{n+1}.
$$
Since $f$ is defined by a base point free linear system, the fiber of $f$ is generically reduced and corresponds to $l^{n+1}$ points by B\'ezout's theorem.
\end{proof}

\begin{rmk}
The sign appearing in the formula comes from the identification $$H^{n+1}(\P^{n+1},\Omega_{\P^{n+1}}^{n+1})\simeq \Hip^{2n+2}(\P^{n+1},\Omega_{\P^{n+1}}^\bullet)\simeq  H^{2n+2}_\dR(\P^{n+1}).$$ 
We have adopted Carlson and Griffiths' convention for the total complex differential $$D:=(d+(-1)^k\delta)|_{C^{2n+2-k}(\P^{n+1},\Omega_{\P^{n+1}}^k)},$$ associated to the \v{C}ech-de Rham double complex $C^\bullet(\P^{n+1},\Omega_{\P^{n+1}}^\bullet)$, see \cite[page 9]{CarlsonGriffiths1980}. This sign was already pointed out by Deligne in \cite[page 6]{Deligne1982}. The previous proposition can also be found in \cite[Remark (2), page 19]{CarlsonGriffiths1980}.
\end{rmk}

\begin{cor}[Periods of top forms over the projective space II]
\label{15.5.2018}
For every homogeneous polynomial $Q\in \C[x_0,\ldots,x_{n+1}]_{(l-1)(n+2)}$, 
$$
\int_{\P^{n+1}}\frac{Q\Omega}{f_0\cdots f_{n+1}}=c\cdot l^{n+2}\cdot (-1)^{n+2\choose 2}(2\pi\sqrt{-1})^{n+1},
$$
where $c\in \C$ is the unique number such that 
\begin{center}
$Q\equiv c\cdot \det(\Jac(f))$ (mod $\langle f_0,\ldots,f_{n+1}\rangle$).
\end{center}
\end{cor}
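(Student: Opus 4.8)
The plan is to deduce the Corollary from Proposition \ref{08.10.5} and Macaulay's Theorem \ref{08.10.6}, the bridge being the elementary differential identity $l\cdot\Omega_f=\det(\mathrm{Jac}(f))\cdot\Omega$. To prove this identity I would start from the chain rule $df_0\wedge\cdots\wedge df_{n+1}=\det(\mathrm{Jac}(f))\,dx_0\wedge\cdots\wedge dx_{n+1}$ and apply the contraction $\iota_E$ by the Euler vector field $E=\sum_{i=0}^{n+1}x_i\frac{\partial}{\partial x_i}$: the right-hand side becomes $\det(\mathrm{Jac}(f))\,\Omega$ by the very definition of $\Omega$, while the left-hand side becomes $\sum_{i=0}^{n+1}(-1)^i(\iota_E df_i)\,\widehat{df_i}=l\sum_{i=0}^{n+1}(-1)^if_i\,\widehat{df_i}=l\,\Omega_f$, where I used Euler's relation $\iota_E df_i=\sum_j x_j\frac{\partial f_i}{\partial x_j}=lf_i$.

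Granting the identity, Proposition \ref{08.10.5} settles the case $Q=\det(\mathrm{Jac}(f))$, $c=1$:
$$
\int_{\P^{n+1}}\frac{\det(\mathrm{Jac}(f))\,\Omega}{f_0\cdots f_{n+1}}=l\int_{\P^{n+1}}\frac{\Omega_f}{f_0\cdots f_{n+1}}=l^{n+2}(-1)^{n+2\choose 2}(2\pi\sqrt{-1})^{n+1}.
$$
Note also that $\det(\mathrm{Jac}(f))\notin\langle f_0,\dots,f_{n+1}\rangle$: if $\det(\mathrm{Jac}(f))=\sum_i f_iA_i$, then $\frac{\det(\mathrm{Jac}(f))\,\Omega}{f_0\cdots f_{n+1}}=\sum_i\frac{A_i\Omega}{\prod_{j\ne i}f_j}$ would be a sum of top forms each omitting a pole along one of the hypersurfaces $\{f_j=0\}$, hence exact by the remark preceding this Corollary, so of vanishing period, contradicting the displayed formula. (For $l=1$ this is immediate, $\det(\mathrm{Jac}(f))$ being a nonzero constant.) Since $\dim_\C R_\sigma=1$ for $R=\C[x_0,\dots,x_{n+1}]/\langle f_0,\dots,f_{n+1}\rangle$ and $\sigma=(l-1)(n+2)$ by Macaulay's Theorem \ref{08.10.6}, it follows that $R_\sigma=\C\cdot\det(\mathrm{Jac}(f))$, so the number $c$ in the statement is well defined.

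For general $Q$ of degree $\sigma$, write $Q=c\,\det(\mathrm{Jac}(f))+\sum_i f_iA_i$ (with $\deg A_i=\sigma-l$ when $l\ge2$, and no correction term when $l=1$). Then
$$
\frac{Q\,\Omega}{f_0\cdots f_{n+1}}=c\,\frac{\det(\mathrm{Jac}(f))\,\Omega}{f_0\cdots f_{n+1}}+\sum_i\frac{A_i\Omega}{\prod_{j\ne i}f_j},
$$
each term of the last sum being exact as above; integrating over $\P^{n+1}$ and using the special case gives $\int_{\P^{n+1}}\frac{Q\,\Omega}{f_0\cdots f_{n+1}}=c\cdot l^{n+2}(-1)^{n+2\choose 2}(2\pi\sqrt{-1})^{n+1}$, which is the claim.

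The only step that is not purely formal is the vanishing of the period of a top form whose denominator drops one of the $n+2$ factors $f_j$; this is precisely the assertion, in the remark preceding the statement, that the forms \eqref{08.10.1} with some $\alpha_i\le0$ are \v{C}ech-exact, which one checks by writing an explicit primitive $n$-cochain for the cover $\U_f$ supported on the $(n+1)$-fold intersection that leaves out $V_i$. I expect that sign-and-index bookkeeping to be the only real obstacle; once the identity $l\,\Omega_f=\det(\mathrm{Jac}(f))\,\Omega$ is in hand, everything else is the two-line manipulation above.
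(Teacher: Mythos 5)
Your proof is correct and follows essentially the same route as the paper: you establish the identity $l\,\Omega_f=\det(\mathrm{Jac}(f))\,\Omega$ via Euler's identity (the paper says "It is easy to see (using Euler's identity)"), then combine Proposition \ref{08.10.5} with item (i) of Macaulay's Theorem \ref{08.10.6} and the exactness remark preceding the Corollary. The one detail you make explicit that the paper leaves tacit is that $\det(\mathrm{Jac}(f))\notin\langle f_0,\dots,f_{n+1}\rangle$, and your way of seeing this — it must be nonzero in $R_\sigma$ because its associated top form has nonzero period — is a clean, self-contained argument.
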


\begin{proof}
Using Euler's identity one easily sees that 
\begin{equation}
\label{form}    
\Omega_f=l^{-1}\det(\Jac(f))\Omega,
\end{equation}
where $\Jac(f)=\left(\frac{\partial f_i}{\partial x_j}\right)_{0\le i,j\le n+1}$ is the Jacobian matrix of $f$. 
The rest follows from item (i) of Macaulay's Theorem \ref{08.10.6} and Proposition \ref{08.10.5}.
\end{proof}

\begin{rmk}
Corollary \ref{15.5.2018} implies in particular that the top form $\frac{\Omega}{x_0\cdots x_{n+1}}\in H^{n+1}(\U,\Omega_{\P^{n+1}}^{n+1})$ (with respect to the standard open cover $\U$ of $\P^{n+1}$) integrates $(-1)^{n+2\choose 2}(2\pi\sqrt{-1})^{n+1}$. This can also be deduced from the fact that the polarization $\theta\in H^1(\U,\Omega_{\P^{n+1}}^1)$ is given by $\theta_{ij}=\frac{dx_i}{x_i}-\frac{dx_j}{x_j}$, and so applying several times the twisted product formula we get $$\theta^{n+1}=(-1)^{n+2\choose 2}\frac{\Omega}{x_0\cdots x_{n+1}}.$$
\end{rmk}

\begin{prop}
Under the hypothesis of Proposition \ref{08.10.5}. For every top form 
$$
\omega\in H^{n+1}(\U_f,\Omega_{\P^{n+1}}^{n+1})
$$
there exist explicit polynomials $Q_1,\ldots,Q_k\in \C[x_0,\ldots,x_{n+1}]$ of degree $(l-1)(n+2)$ such that
$$
\int_{\P^{n+1}}\omega=\sum_{i=1}^k\int_{\P^{n+1}}\frac{Q_i\Omega}{f_0\cdots f_{n+1}}.
$$
\end{prop}

\begin{proof}
In general, any element of 
$H^{n+1}(\U_f,\Omega_{\P^{n+1}}^{n+1})$ is of the form
$$
\omega=\frac{P\Omega}{f_0^{\alpha_0}\cdots f_{n+1}^{\alpha_{n+1}}},
$$
where $\alpha_0,\ldots,\alpha_{n+1}\in \Z_{>0}$ with $l\cdot (\alpha_0+\cdots+\alpha_{n+1})=\deg(P)+n+2$.
Using Macaulay's Theorem \ref{08.10.6} applied to $\langle f_0,\ldots,f_{n+1}\rangle\subseteq\C[x_0,\ldots,x_{n+1}]$, we obtain that 
$$
P=\sum_{l(\beta_0+\cdots+\beta_{n+1})=\deg(P)-l(n+2)}f_0^{\beta_0}\cdots f_{n+1}^{\beta_{n+1}}P_\beta,
$$
with $\deg(P_\beta)=(l-1)(n+2)$. This reduces the problem of computing periods of top forms over $\P^{n+1}$ with respect to the cover $\U_f$, to forms 
\begin{equation}
\label{08.10.1}
\frac{P_\beta\Omega}{f_0^{\alpha_0}\cdots f_{n+1}^{\alpha_{n+1}}}\in H^{n+1}(\U_f,\Omega_{\P^{n+1}}^{n+1}),
\end{equation}
with $\alpha_0,\ldots,\alpha_{n+1}\in\Z$ such that $\alpha_0+\cdots+\alpha_{n+1}=n+2$ and $\deg(P_\beta)=(l-1)(n+2)$. If some $\alpha_i$ is non-positive, \eqref{08.10.1} represents an exact top form of $\P^{n+1}$. Therefore, the following are the forms which may have non-trivial periods
$$
\frac{Q\Omega}{f_0\cdots f_{n+1}}\in H^{n+1}(\U_f,\Omega_{\P^{n+1}}^{n+1}),
$$
with $\deg(Q)=(l-1)(n+2)$.
\end{proof}

In order to compute periods of complete intersection algebraic cycles, we will compute periods of smooth hyperplane sections of a given projective 
smooth variety $X$ (by hyperplane section, we mean that in some projective embedding it corresponds to the intersection of a hyperplane with $X$). In fact, for $Y\hookrightarrow X$ a smooth hypersurface given by $\{F=0\}$, we will give an explicit description of the isomorphism 
\begin{eqnarray*}
      H^{n}(Y,\Omega_Y^{n})&\simeq & H^{n+1}(X,\Omega_X^{n+1}), \\
        \omega & \mapsto & \widetilde{\omega}
\end{eqnarray*}
together with the relation between periods, i.e. the number $a\in\C$ such that 
$$
\int_X\widetilde{\omega}=a\int_Y\omega.
$$

For this purpose recall the long exact sequence 
$$
\cdots\rightarrow H_\dR^{k+1}(X)\rightarrow H^{k+1}_\dR(U)\xrightarrow[]{\res} H_\dR^{k}(Y)\xrightarrow[]{\tau} H_\dR^{k+2}(X)\rightarrow \cdots,
$$
induced by \textit{Poincar\'e residue sequence}
$$
0\rightarrow \Omega_X^\bullet\rightarrow \Omega_X^\bullet(\log Y)\xrightarrow[]{\res} j_*\Omega_Y^{\bullet-1}\rightarrow 0.
$$
Since $H^{2n+1}_\dR(U)=H^{2n+2}_\dR(U)=0$, the \textit{coboundary map} is an isomorphism
$$
H^{2n}_\dR(Y)\overset{\tau}{\simeq} H^{2n+2}_\dR(X).
$$
Noting that these vector spaces are one dimensional, and that $\tau$ induces an isomorphism of Hodge structures of weight $(1,1)$ (since it is nothing else than the wedge product with the cohomological class of $Y$ inside $X$, i.e. its first Chern class), we obtain the desired isomorphism 
\begin{equation}
\label{19.10.2018.2}    
H^n(Y,\Omega_Y^n)\overset{\tau}{\simeq}H^{n+1}(X,\Omega_X^{n+1}).
\end{equation}

\begin{prop}[Coboundary map \eqref{19.10.2018.2} and periods]
\label{27.10.1}
Let $X\subseteq \P^N$ be a smooth complete intersection of dimension $n+1$, and $Y\subseteq X$ a smooth hypersurface given by $\{F=0\}\cap X$, for some homogeneous $F\in \C[x_0,\ldots,x_N]_d$. Let $\U$ be an affine open cover of $X$ and let $\omega\in H^n(\U|_Y,\Omega_Y^n)$. Take any $\overline{\omega}\in C^n(\U, \Omega_X^{n+1}(\log Y))$ such that $\res(\overline{\omega})=\omega$. Define
$$
\widetilde\omega:=\delta(\overline{\omega})\in C^{n+1}(\U,\Omega_X^{n+1}),
$$
where $\delta$ is the \v{C}ech differential. Then $\widetilde{\omega}\in H^{n+1}(X,\Omega_X^{n+1})$ and 
\begin{equation}
\label{2.10.2018}
\int_X\widetilde{\omega}=2\pi\sqrt{-1}\int_Y\omega.    
\end{equation}
\end{prop}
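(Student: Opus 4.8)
The plan is to verify first that $\widetilde\omega$ is a well-defined \v{C}ech cocycle realizing the coboundary image of $[\omega]$, and then to compute the two periods inside a common \v{C}ech--de Rham double complex, where the factor $2\pi\sqrt{-1}$ emerges from a one-variable contour integral transverse to $Y$, in the same spirit as the factor $(2\pi\sqrt{-1})^{n+1}$ obtained in Proposition~\ref{08.10.5}.

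\textbf{Step 1 (well-definedness).} The residue map is a morphism of complexes of sheaves, hence commutes with the \v{C}ech differential $\delta$, so $res(\widetilde\omega)=res(\delta\overline\omega)=\delta(res\,\overline\omega)=\delta\omega=0$, the last equality because $\omega$ is a cocycle on $Y$. Since the Poincar\'e residue sequence in degree $n+1$ identifies $\Omega_X^{n+1}$ with $\ker\bigl(res\colon \Omega_X^{n+1}(\log Y)\to j_*\Omega_Y^{n}\bigr)$, we get $\widetilde\omega\in C^{n+1}(\U,\Omega_X^{n+1})$, and it is $\delta$-closed since $\delta^2=0$. As $\U$ is affine and $\Omega_X^{n+1}$ is coherent, $H^{n+1}(\U,\Omega_X^{n+1})=H^{n+1}(X,\Omega_X^{n+1})$, so $\widetilde\omega$ defines a class there; a different lift of $\omega$ differs from $\overline\omega$ by an element of $C^{n}(\U,\Omega_X^{n+1})$, so the class is independent of the choice, and by the standard description of connecting homomorphisms it is exactly the image of $[\omega]$ under the isomorphism $\tau$ of \eqref{19.10.2018.2}.

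\textbf{Step 2 (the period).} Pass to the fine \v{C}ech--de Rham bicomplex $K^{p,q}=C^p(\U,\mathcal A_X^q)$ of $\mathcal C^\infty$ forms, whose total cohomology is $H^\bullet_{\dR}(X)$; since $\Omega_X^{n+2}=0$, the cochain $\widetilde\omega$, viewed in $K^{n+1,n+1}$, is both $\delta$- and $d$-closed, hence a total cocycle, and $\int_X\widetilde\omega$ is by definition the integral over $X$ of any global $(2n+2)$-form cohomologous to it. Choose the lift $\overline\omega$ so that on each chart its polar part has the shape $\frac{dg_i}{g_i}\wedge(\cdot)$ with $\{g_i=0\}$ a local equation of $Y$; then descend $\widetilde\omega$ to a global form by the usual staircase, solving $\delta(\text{next})=(\text{current})$ at each step with a partition of unity subordinate to $\U$ (possible because the rows of $K^{\bullet,\bullet}$ are exact in positive degree). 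Carrying this out in parallel with the corresponding descent of $\omega$ on $Y$, through the residue map which intertwines the two bicomplexes up to the log twist, one obtains a global representative of $\widetilde\omega$ supported in a thin tubular neighbourhood of $Y$ and factoring, fibrewise in the normal bundle $N_{Y/X}$, as $d(\text{bump})\wedge\frac{dz}{z}$ wedged with the pull-back of a global representative of $\omega$. Fubini and Stokes then give $\int_X\widetilde\omega=\bigl(\int_{S^1}\frac{dz}{z}\bigr)\int_Y\omega=2\pi\sqrt{-1}\int_Y\omega$, which is \eqref{2.10.2018}.

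The crux is Step 2: arranging the descent of $\widetilde\omega$ so that it is literally supported near $Y$ and splits into a transverse ``$\frac{dz}{z}$'' part and a tangential part pulled back from $Y$, while keeping the two staircases (on $X$ with $\U$ and on $Y$ with $\U|_Y$) compatible under $res$, and checking that exactly one factor $2\pi\sqrt{-1}$ and no stray sign survives. This is the relative analogue of the bump-function/Stokes computation of Proposition~\ref{08.10.5}, and the sign conventions for the total differential $D=d+(-1)^k\delta$ must be tracked with the same care.
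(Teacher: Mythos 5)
Your proposal is correct, but it takes a genuinely more hands-on route than the paper, which disposes of the statement in two sentences: it observes that $\widetilde\omega=\tau(\omega)$ is by construction the connecting homomorphism of the Poincar\'e residue sequence, invokes the known identification of that long exact sequence with the Thom--Gysin sequence (so $\tau(\omega)=\omega\wedge[Y]$), and then reads \eqref{2.10.2018} as an instance of Poincar\'e duality. You instead reprove the local content of the Gysin formula directly: after the (correct and essentially identical) Step~1 identifying $\widetilde\omega$ as the image of $[\omega]$ under $\tau$, your Step~2 localizes a representative of $\widetilde\omega$ in a tubular neighbourhood of $Y$, splits it fibrewise in $N_{Y/X}$ into a transverse $d(\text{bump})\wedge\frac{dz}{z}$ factor and a tangential pull-back from $Y$, and extracts $2\pi\sqrt{-1}$ by Fubini and a contour integral --- the same mechanism you use in mimicking Proposition~\ref{08.10.5}. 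What each approach buys: the paper's is shorter and robust to sign/convention pitfalls because it outsources them to the cited identification; yours is more self-contained and explains \emph{why} exactly one factor of $2\pi\sqrt{-1}$ appears, at the cost of having to track the staircase descent and the gluing of local equations $g_i$ of $Y$ (whose ratios $g_i/g_j$ are only units on overlaps, so the ``global $\frac{dz}{z}$'' picture needs the normal-bundle formulation you invoke). As you yourself flag, the descent in Step~2 is sketched rather than carried out; for the argument to be fully rigorous one would need to exhibit the compatible partitions of unity on $X$ and on $Y$ and verify that the total differential $D=d+(-1)^k\delta$ conventions do not introduce a stray sign. No step would actually fail, so the gap is one of detail rather than of idea.
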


\begin{proof}
The map defined in the statement of the proposition is the coboundary map $\tau$, i.e. $\tau(\omega)=\widetilde\omega$. It is known that the long exact sequence associated to the Poincar\'e residue sequence corresponds to the Thom-Gysin sequence and so $\tau$ corresponds to $\widetilde{\omega}=\omega\wedge[Y]$. Therefore \eqref{2.10.2018} corresponds to Poincar\'e duality.  
\end{proof}

\section{Proof of Theorem \ref{15.5.2018.2}}
\label{2}
Let $X\subseteq \mathbb{P}^{n+1}$ be a smooth degree $d$ hypersurface of even dimension $n$. Given the complete intersection $Z\subseteq X$ of dimension $\frac{n}{2}$, we construct 
a chain of subvarieties 
$$
Z=Z_0\subseteq Z_1\subseteq Z_2\subseteq \cdots\subseteq Z_{\frac{n}{2}+1}=\P^{n+1},
$$
where each $Z_i$ is the intersection of $Z_{i+1}$ with a very ample divisor of $\P^{n+1}$. In order to prove Theorem \ref{15.5.2018.2}, we will apply inductively the coboundary map, to reduce the computation of the period of $Z$ to the computation 
of a period of $\P^{n+1}$.

\begin{prop}
\label{cont}
Both sides of the periods equation \eqref{19.10.2018} depend continuously on the parameters 
$
(f_1,g_1,\ldots,f_{\frac{n}{2}+1},g_{\frac{n}{2}+1})\in \bigoplus_{i=1}^{\frac{n}{2}+1}\C[x_0,\ldots,x_{n+1}]_{d_i}\oplus\C[x_0,\ldots,x_{n+1}]_{d-d_i},
$ 
such that $F:=f_1g_1+\cdots+f_{\frac{n}{2}+1}g_{\frac{n}{2}+1}$.
\end{prop}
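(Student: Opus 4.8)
\emph{Setup.} The quantities appearing in \eqref{19.10.2018} are only defined over the open set $\mathcal P^{\circ}$ of parameters $t=(f_1,g_1,\ldots,f_{\frac n2+1},g_{\frac n2+1})$ for which $X_t=\{F_t=0\}$, $F_t:=\sum_i f_ig_i$, is smooth and $Z_t=\{f_1=\cdots=f_{\frac n2+1}=0\}$ is a complete intersection of pure dimension $\frac n2$, and I read the statement as asserting continuity on $\mathcal P^{\circ}$. Fix once and for all a base point $t_0\in\mathcal P^{\circ}$ and a small ball $B\ni t_0$ contained in $\mathcal P^{\circ}$, and write $S:=\C[x_0,\ldots,x_{n+1}]$. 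I would treat the two sides of \eqref{19.10.2018} by entirely different means: the right-hand side reduces to linear algebra, while the left-hand side is the only part that requires honest Hodge theory.

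\emph{The right-hand side.} The polynomials $det(Jac(H_t))$ and $det(Hess(F_t))$ are polynomial, hence continuous, in $t$, and $\deg\!\big(P\cdot det(Jac(H_t))\big)=(d-2)(n+2)=soc(R^{F_t})$. Consider the linear map $L_t\colon\bigoplus_{i=0}^{n+1}S_{(d-2)(n+2)-(d-1)}\to S_{(d-2)(n+2)}$, $(A_i)_i\mapsto\sum_i A_i\frac{\partial F_t}{\partial x_i}$, whose matrix is polynomial in $t$ and whose image is $J^{F_t}_{(d-2)(n+2)}$. By Theorem \ref{08.10.6} applied to $\frac{\partial F_t}{\partial x_0},\ldots,\frac{\partial F_t}{\partial x_{n+1}}$ (which have no common projective zero when $t\in\mathcal P^{\circ}$, as $X_t$ is then smooth), this image has codimension one and is complemented by the line $\C\cdot det(Hess(F_t))$ (recall $det(Hess(F_t))$ spans the socle of $R^{F_t}$), so the augmented map $(A_i,c)\mapsto\sum_i A_i\frac{\partial F_t}{\partial x_i}+c\cdot det(Hess(F_t))$ is surjective of constant rank. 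A continuously varying linear map of locally constant rank has a continuously varying right inverse (for instance the Moore--Penrose pseudoinverse), so the equation $\sum_i A_i(t)\frac{\partial F_t}{\partial x_i}+c(t)\cdot det(Hess(F_t))=P\cdot det(Jac(H_t))$ has a solution with $(A_i(t),c(t))$ continuous in $t\in B$; since $c$ is uniquely determined regardless of the choice of the $A_i$ (otherwise $det(Hess(F_t))\in J^{F_t}$), this $c(t)$ is the one in \eqref{19.10.2018}, proving its continuity. Alternatively one may write $c(t)=\ell_t\!\big(P\cdot det(Jac(H_t))\big)$ with $\ell_t$ spanning the one-dimensional $\ker(L_t^{*})$ normalized by $\ell_t(det(Hess(F_t)))=1$, and invoke the same locally-constant-rank principle for the kernel.

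\emph{The left-hand side.} After shrinking $B$, there is a $C^{\infty}$ trivialization of the family of pairs $(\P^{n+1},X_t)_{t\in B}$ (Ehresmann's theorem / Thom's isotopy lemma). Through it each $H^n_{\dR}(X_t)$ is identified with the fixed space $H^n_{\dR}(X_{t_0})$ by the Gauss--Manin (flat) identification, so the classes $res\!\left(\frac{P\Omega}{F_t^{\frac n2+1}}\right)$ — being algebraic sections of the de Rham bundle over $\mathcal P^{\circ}$ in Griffiths' description — become a holomorphic family of vectors in $H^n_{\dR}(X_{t_0})$. By upper semicontinuity of fibre dimension I may shrink $B$ further so that every $Z_t$ ($t\in B$) is a complete intersection of pure dimension $\frac n2$; then $\bigcup_{t\in B}Z_t\to B$ is a flat proper family, so the cycle classes $[Z_t]\in H_n(X_t,\Z)\cong H_n(X_{t_0},\Z)$ vary continuously with $t$ and, being integral, are locally constant. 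Hence $\int_{Z_t}res\!\left(\frac{P\Omega}{F_t^{\frac n2+1}}\right)$, which is the Poincar\'e pairing of the locally constant class $[Z_t]$ with the holomorphically varying class $res\!\left(\frac{P\Omega}{F_t^{\frac n2+1}}\right)$ (equivalently $\tfrac1{2\pi\sqrt{-1}}$ times the integral of $\frac{P\Omega}{F_t^{\frac n2+1}}$ over the Leray tube of $[Z_t]$ in $\P^{n+1}\setminus X_t$), is a holomorphic — in particular continuous — function of $t\in B$. As $t_0\in\mathcal P^{\circ}$ was arbitrary, both sides of \eqref{19.10.2018} are continuous on $\mathcal P^{\circ}$.

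\emph{Main obstacle.} The single genuinely non-formal point is keeping the left-hand side under control at parameters where $Z_t$ is singular: what one must use is not the smoothness of the individual fibres $Z_t$ (which may well fail) but the \emph{flatness} of the family $\bigcup_t Z_t\to B$, which is precisely what forces the fundamental class $[Z_t]$ (equivalently the Leray tube it spans) to move continuously. This is exactly the feature that legitimizes proving Theorem \ref{15.5.2018.2} first in the case where $Z$ is smooth and then deducing the general case by perturbing within $\mathcal P^{\circ}$.
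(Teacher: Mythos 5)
Your proof is correct, and for the right-hand side it is in substance the same as the paper's. The paper shows that $V_\lambda:=J^{F_\lambda}_{2\sigma}$ is a hyperplane in $V:=\C[x]_{2\sigma}$ spanned by vectors depending continuously on $\lambda$, deduces a continuous map $\varphi:U\to\P(V^*)$ with $V_\lambda=\ker\varphi_\lambda$, and sets $c=\varphi_\lambda(P\cdot det(Jac(\lambda)))/\varphi_\lambda(det(Hess(F_\lambda)))$ — this is exactly the ``alternatively'' you sketch at the end of your right-hand-side paragraph (one linear functional spanning $\ker L_t^*$, normalized). Your primary version, passing through a continuous right inverse of the augmented map, is equivalent but packaged slightly differently; either way the point is that $J^{F_t}$ in the socle degree is a continuously varying hyperplane, which is what the paper establishes.

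Where you go further than the paper is the left-hand side. The paper's proof is silent about it and implicitly takes continuity of $\int_{Z_t}\omega_{P}$ for granted; your argument via Ehresmann trivialization, the Gauss--Manin flat identification, and the flatness of the family $\bigcup_{t\in B}Z_t\to B$ (constant Hilbert polynomial, hence locally constant integral class $[Z_t]$ paired against a holomorphically varying de Rham class) fills this in. You are also right to flag that the non-trivial point is precisely the possible singularity of $Z_t$: one should not appeal to smoothness of the fibres but to flatness of the family, and this is indeed what legitimizes the paper's strategy of proving Theorem \ref{15.5.2018.2} first on the open dense locus where $Z$ is smooth and then extending by continuity. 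So your proposal is correct and, on the part the paper omits, more complete than the paper's own proof.
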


\begin{proof}
Consider 
$$
U:=\left\{(f_1,g_1,\ldots,f_{\frac{n}{2}+1},g_{\frac{n}{2}+1})\in \bigoplus_{i=1}^{\frac{n}{2}+1}\C[x]_{d_i}\oplus\C[x]_{d-d_i}:\right.
$$
$$
X:=\{f_1g_1+\cdots+f_{\frac{n}{2}+1}g_{\frac{n}{2}+1}=0\}\text{ is smooth and }
$$
$$
\left. Z:=\{f_1=f_2=\cdots =f_{\frac{n}{2}+1}=0\}\text{ is a complete intersection}\right\}.
$$
Let $\sigma:=(d-2)(\frac{n}{2}+1)$ and fix any $P\in\C[x]_\sigma$. For $(f_1,g_1,\ldots,f_{\frac{n}{2}+1},g_{\frac{n}{2}+1})\in U$, we know that the Jacobian ideal $J^F:=\langle \frac{\partial F}{\partial x_0},\ldots,\frac{\partial F}{\partial x_{n+1}}\rangle$ (where $F:=f_1g_1+\cdots+f_{\frac{n}{2}+1}g_{\frac{n}{2}+1}\in \C[x]_d$) is Artinian Gorenstein of $soc(J^F)=2\sigma$, and that $\det(\Hess(F))\in \C[x]_{2\sigma}\setminus J^F_{2\sigma}$ (by Corollary \ref{15.5.2018}). Therefore there exists a unique number $c\in\C$ such that 
$$
P\cdot \det(\Jac(f_1,g_1,\ldots,f_{\frac{n}{2}+1},g_{\frac{n}{2}+1}))\equiv c\cdot \det(\Hess(F))\hspace{5mm}(\text{mod }J^F).
$$
We claim that this number $c$ depends continuously on 
$$
\lambda:=(f_1,g_1,\ldots,f_{\frac{n}{2}+1},g_{\frac{n}{2}+1})\in U.
$$
In fact, consider the $\C$-vector space $V:=\C[x]_{2\sigma}$. For every $\lambda\in U$ define the hyperplane $V_\lambda:=J^F_{2\sigma}\subseteq V$, we claim that $V_\lambda$ varies continuously with respect to $\lambda$ in the space of hyperplanes of $V$, in fact, each $V_\lambda$ is generated as $\C$-vector space by the vectors 
$$
V_\lambda=\left\langle \frac{\partial F_\lambda}{\partial x_i}x^{I}: i=0,\ldots,n+1, x^I\text{ monomials with }|I|=2\sigma-d+1\right\rangle,
$$
where $F_\lambda:=F=f_1g_1+\cdots+f_{\frac{n}{2}+1}g_{\frac{n}{2}+1}$, and each of these vectors depend continuously on $\lambda\in U$ (here we are using the non-trivial fact that we know a priori that the generated spaces are hyperplanes). In consequence, there exists a continuous map 
$$
\varphi:U\rightarrow \P(V^*)
$$ 
such that $V_\lambda=\text{Ker }\varphi_\lambda$. Now we can compute $c$ in terms of continuous functions depending on $\lambda\in U$ as
$$
c=\frac{\varphi_\lambda(P\cdot \det(\Jac(\lambda)))}{\varphi_\lambda(\det(\Hess(F_\lambda)))}.
$$
\end{proof}

Proposition \ref{cont} implies that it is enough to prove Theorem \ref{15.5.2018.2} for a general $(f_1,g_1,\ldots,$ $f_{\frac{n}{2}+1},g_{\frac{n}{2}+1})$. This is why we may assume each $Z_{l-1}$ is a smooth hyperplane section of $Z_{l}$, for $l=1,\ldots,\frac{n}{2}+1$, as in the hypothesis of Proposition \ref{27.10.1}.

\bigskip

Let $P\in \C[x_0,\ldots,x_{n+1}]_{(d-2)(\frac{n}{2}+1)}$, $\U$ be the Jacobian cover of $\P^{n+1}$, and 
$$
\omega:=\omega_P\in H^\frac{n}{2}(\U|_X,\Omega_X^\frac{n}{2}),
$$
as in \eqref{18.9.2018}. Using Proposition \ref{27.10.1} we construct inductively
\begin{center}
${\omega}^{(0)}:=\omega|_Z\in H^{\frac{n}{2}}(\U|_Z,\Omega_{Z}^{\frac{n}{2}})$ and $Z_0:=Z$.
\end{center}
Then for $l=1,\ldots,\frac{n}{2}+1$ we define
\begin{center}
${\omega}^{(l)}:=\widetilde{{\omega}^{(l-1)}}\in H^{\frac{n}{2}+l}(\U|_{Z_l},\Omega_{Z_l}^{\frac{n}{2}+l})$ and $Z_l:=\{f_{l+1}=\cdots=f_{\frac{n}{2}+1}=0\}\subseteq\mathbb{P}^{n+1}$.
\end{center}
Observe that $Z_{\frac{n}{2}+1}=\mathbb{P}^{n+1}$. 

\begin{lemma}
\label{15.5.2018.3}
For $l\in\{0,\ldots,\frac{n}{2}+1\}$ and $J=(j_0,\ldots,j_{\frac{n}{2}+l})$, $0\le j_0<\cdots<j_{\frac{n}{2}+l}\le n+1$,
$$
\left.\begin{array}{llll}
({\omega}^{(l)})_J&=\dfrac{(-1)^{{\frac{n}{2}+2 \choose 2}+j_0+\cdots+j_{\frac{n}{2}+l}}Pd^ld_1\cdots d_l}{\frac{n}{2}!\cdot F_J}\\

&\cdot\left[\displaystyle\sum_{m=1}^l(-1)^{m-1}g_m\widehat{\frac{dg_m}{d}}\bigwedge_{r=0}^{\frac{n}{2}-l}dx_{k_r}\bigwedge_{t=1}^l\frac{df_t}{d_t}\right. \\

&+(-1)^l\displaystyle\sum_{p=0}^{\frac{n}{2}-l}(-1)^px_{k_p}\bigwedge_{s=1}^l\frac{dg_s}{d}\wedge\widehat{dx_{k_p}}\wedge\bigwedge_{t=1}^l\frac{df_t}{d_t}\\

&\left.+(-1)^{\frac{n}{2}+l}\displaystyle\sum_{q=1}^l\widehat{\frac{dg_q}{d}}\wedge\frac{dF}{d}\bigwedge_{r=0}^{\frac{n}{2}-l}dx_{k_r}\wedge\widehat{\frac{df_q}{d_q}}\right],
       \end{array}
\right.
$$
where $K=(k_0,\ldots,k_{\frac{n}{2}-l})$ is obtained from $(0,1,\ldots,n+1)$ by removing the entries of $J$ (the notation $\widehat{\frac{dg_m}{d}}:=\frac{dg_1}{d}\wedge\cdots \widehat{\frac{dg_m}{d}}\cdots \wedge \frac{dg_l}{d}$, and analogously for $\widehat{dx_{k_p}}$ and $\widehat{\frac{df_q}{d_q}}$, was already set in Notation \ref{22.10.2018.2}). 
\end{lemma}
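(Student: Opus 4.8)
The plan is to prove the formula by induction on $l$, following the inductive construction $\omega^{(l)}=\widetilde{\omega^{(l-1)}}$ of Proposition \ref{27.10.1}; throughout I write $K=(k_0,\dots,k_{\frac{n}{2}-l})$ for the complement of $J$ in $(0,1,\dots,n+1)$ and $A,B,C$ for the three summands inside the bracket of the claimed formula for $(\omega^{(l)})_J$. For the base case $l=0$ one has $Z_0=Z$ and $\omega^{(0)}=\omega_P|_Z$; the sums $\sum_{m=1}^{0}$, $\sum_{q=1}^{0}$ are empty and the products $\bigwedge_{s=1}^{0}$, $\bigwedge_{t=1}^{0}$ equal $1$, so the bracket collapses to $B=\sum_{p=0}^{\frac{n}{2}}(-1)^p x_{k_p}\widehat{dx_{k_p}}$ and the prefactor to $\dfrac{(-1)^{\binom{\frac{n}{2}+2}{2}+j_0+\cdots+j_{\frac{n}{2}}}P}{\frac{n}{2}!\,F_J}$. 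By the expansion \eqref{22.10.2018} of $\Omega_J$ this equals $\frac{1}{\frac{n}{2}!}\frac{P\,\Omega_J}{F_J}$, which is $(\omega_P)^{\frac{n}{2},\frac{n}{2}}_J$ by \eqref{18.9.2018}; restricting to $Z$ gives $\omega^{(0)}$, so the base case is just a matching of signs.

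\emph{The lift.} Assume the formula for $\omega^{(l-1)}$ on $Z_{l-1}$, and apply Proposition \ref{27.10.1} with $(X,Y)=(Z_l,Z_{l-1})$, where $Z_{l-1}=\{f_l=0\}\cap Z_l$; by the perturbation allowed after Proposition \ref{cont} both varieties may be assumed smooth. As a lift of $\omega^{(l-1)}$ I would take, for every $(\frac{n}{2}+l)$-element index set $J'$,
$$
\overline{\omega^{(l-1)}}_{J'}:=\frac{df_l}{f_l}\wedge\beta_{J'},
$$
where $\beta_{J'}$ is the rational $(\frac{n}{2}+l-1)$-form obtained by reading the level-$(l-1)$ formula for the index $J'$ with all of its symbols $x_i,f_t,g_s,F$ and their differentials now interpreted on $Z_l$. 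Two points must be checked: (i) this cochain is a section of $\Omega_{Z_l}^{\frac{n}{2}+l}(\log Z_{l-1})$, since its only denominator besides $f_l$ is $F_{J'}$, invertible on $U_{J'}\cap Z_l$ precisely because $\U$ is the Jacobian covering, so the only pole is the simple pole along $Z_{l-1}$ carried by $\frac{df_l}{f_l}$; (ii) $res\big(\overline{\omega^{(l-1)}}\big)=\omega^{(l-1)}$, which is immediate from $res\big(\frac{df_l}{f_l}\wedge\beta_{J'}\big)=\beta_{J'}|_{Z_{l-1}}=(\omega^{(l-1)})_{J'}$.

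\emph{The \v{C}ech differential.} By Proposition \ref{27.10.1}, $\omega^{(l)}=\delta\big(\overline{\omega^{(l-1)}}\big)$, so for $J=(j_0,\dots,j_{\frac{n}{2}+l})$,
$$
(\omega^{(l)})_J=\sum_{a=0}^{\frac{n}{2}+l}(-1)^a\,\overline{\omega^{(l-1)}}_{J\setminus\{j_a\}}\big|_{U_J}=\frac{df_l}{f_l}\wedge\bigg(\sum_{a=0}^{\frac{n}{2}+l}(-1)^a\beta_{J\setminus\{j_a\}}\bigg).
$$
By Proposition \ref{27.10.1} the left-hand side has no pole along $Z_{l-1}$, which forces it to equal $df_l\wedge\gamma_J$ for a regular form $\gamma_J$; identifying $\gamma_J$ is the whole point. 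I would expand each $\beta_{J\setminus\{j_a\}}$ by the inductive formula --- its complement being $K\cup\{j_a\}$, so each piece carries the block $\bigwedge_{k\in K}dx_k$ together with one extra factor $dx_{j_a}$ --- factor out $\frac{1}{F_J}$ using $F_{J\setminus\{j_a\}}=F_J/F_{j_a}$, and collect terms. The relation $dF|_{Z_l}=\sum_{m=1}^{l}(g_m\,df_m+f_m\,dg_m)$ is essential here: inside the $\frac{dF}{d}$-factor of the level-$(l-1)$ formula the summand $g_l\,df_l$ is annihilated by $\frac{df_l}{f_l}\wedge(-)$ because $df_l\wedge df_l=0$, whereas $f_l\,dg_l$ survives the cancellation of $f_l$ and is the source of the new $dg_l$-factors appearing at level $l$. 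After rewriting $df_l=d_l\cdot\frac{df_l}{d_l}$ and $dg_l=d\cdot\frac{dg_l}{d}$ and applying Euler's identities for $F$, for $f_l$ and for the $g_m$, the constant $d^l d_1\cdots d_l$ and the three pieces $A,B,C$ emerge; note the prefactor sign $(-1)^{\binom{\frac{n}{2}+2}{2}}$ is $l$-independent, consistent with $\frac{df_l}{f_l}$ and $dg_l$ being inserted at fixed slots, all $l$-dependence of signs residing in the Koszul factors inside the bracket.

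\emph{The main obstacle.} The difficulty is entirely organizational: reassembling the many terms produced by $\delta$ (roughly $3(\frac{n}{2}+l+1)$ of them) into precisely $A+B+C$ while correctly tracking (i) the sign $(-1)^{j_0+\cdots}$, which changes as $j_a$ is reinserted into the sorted complement $K$ and slid into its proper position among the $dx_{k_r}$; (ii) the Koszul signs incurred when commuting $\frac{df_l}{f_l}$, and then $df_l$, past the blocks $\bigwedge\frac{dg_s}{d}$, $\bigwedge dx_{k_r}$, $\bigwedge\frac{df_t}{d_t}$; and (iii) which Euler identity is applied in which term. I would carry this out by splitting each level-$(l-1)$ piece $A',B',C'$ according to which of its one-form slots absorbs the extra factor $dx_{j_a}$, and matching these contributions to $A,B,C$ block by block, treating the new slot $m=l$ separately from $m<l$.
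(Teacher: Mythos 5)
Your inductive framework mirrors the paper's exactly — base case, construct a lift $\overline{\omega^{(l-1)}}$ with $\log$ pole along $Z_{l-1}$, apply the \v{C}ech differential $\delta$ as in Proposition~\ref{27.10.1} — but the lift you propose is not the paper's, and as written it is not valid, which is where the gap lies.

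The paper does not lift $\omega^{(l-1)}$ to $\frac{df_l}{f_l}\wedge\beta_{J'}$; its lift $\overline{\omega^{(l)}}_{J}$ has $f_{l+1}$ placed in the denominator of a prefactor, the factor $\tfrac{df_{l+1}}{d_{l+1}}$ appended inside the $\bigwedge_t\tfrac{df_t}{d_t}$ block, \emph{and} an extra fourth summand $(-1)^{\frac{n}{2}+l+1}f_{l+1}\sum_{u=1}^{l+1}\widehat{\tfrac{dg_u}{d}}\bigwedge_r dx_{k_r}\wedge\widehat{\tfrac{df_u}{d_u}}$. This fourth term is not cosmetic. First, recall that in the Carlson--Griffiths formalism all cochains are written as rational forms in homogeneous coordinates on $\C^{n+2}$ that must satisfy $\iota_E\,\omega=0$ for the Euler vector field $E=\sum_i x_i\partial_{x_i}$ in order to descend to the projective variety. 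But $\iota_E\!\left(\tfrac{df_l}{f_l}\right)=d_l\neq 0$, so if $\iota_E\beta_{J'}=0$ (which holds by induction) then $\iota_E\!\left(\tfrac{df_l}{f_l}\wedge\beta_{J'}\right)=d_l\,\beta_{J'}\neq 0$, and your proposed cochain is not a section of $\Omega_{Z_l}^{\frac{n}{2}+l}(\log Z_{l-1})$ at all. The paper's fourth summand is exactly the regular correction needed for the full lift to annihilate $E$. Second, even if one repaired this (say by subtracting a suitable multiple of $\sum_i\tfrac{dx_i}{x_i}$ to make the log factor descend), two lifts with the same residue differ by a regular cochain, so their images under $\delta$ differ by a genuine coboundary; the lemma is a cochain-level identity and your lift would at best produce the stated formula plus an uncontrolled coboundary, breaking the induction at the next step. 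Finally, you explicitly defer the actual reassembly of the $\delta$-terms, which is where the paper does its real work (replacing $\sum_p F_{j_p}dx_{j_p}$ by $dF$ against $\bigwedge dx_{k_r}$, invoking Euler's identity, and then substituting $F=\sum f_i g_i$ back into the resulting $F$- and $dF$-blocks); without this the proof is only an outline. You should start from the paper's explicit lift (or derive it by the $\iota_E$-requirement) rather than from $\tfrac{df_l}{f_l}\wedge\beta_{J'}$.
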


\begin{proof} We proceed by induction on $l$:\\
\\
Computing $\Omega_J$ (as in \eqref{22.10.2018}) we get
$$
({\omega}^{(0)})_{j_0\cdots j_{\frac{n}{2}}}=(\omega)_{j_0\cdots j_{\frac{n}{2}}}=\frac{(-1)^{{\frac{n}{2}+2 \choose 2}+j_0+\cdots+j_\frac{n}{2}}P}{\frac{n}{2}!\cdot F_J}\left[\displaystyle\sum_{p=0}^{\frac{n}{2}}(-1)^px_{k_p}\widehat{dx_{k_p}}\right].
$$
Assuming it is true for $l$, then we can take $\overline{\omega^{(l)}}_J\in C^{\frac{n}{2}+l}(Z_{l+1},\Omega_{Z_{l+1}}^{\frac{n}{2}+l+1}(\log Z_l))$ given by
$$\left.\begin{array}{lllll}
\overline{\omega^{(l)}}_J&=\dfrac{(-1)^{{\frac{n}{2}+2 \choose 2}+j_0+\cdots+j_{\frac{n}{2}+l}}Pd^ld_1\cdots d_{l+1}}{\frac{n}{2}!\cdot F_J\cdot f_{l+1}}\\

&\cdot\left[\displaystyle\sum_{m=1}^l(-1)^{m-1}g_m\widehat{\frac{dg_m}{d}}\bigwedge_{r=0}^{\frac{n}{2}-l}dx_{k_r}\bigwedge_{t=1}^{l+1}\frac{df_t}{d_t}\right. \\

&\left.+(-1)^l\displaystyle\sum_{p=0}^{\frac{n}{2}-l}(-1)^px_{k_p}\bigwedge_{s=1}^l\frac{dg_s}{d}\wedge\widehat{dx_{k_p}}\wedge\bigwedge_{t=1}^{l+1}\frac{df_t}{d_t}\right.\\

&\left.+(-1)^{\frac{n}{2}+l}\displaystyle\sum_{q=1}^l\widehat{\frac{dg_q}{d}}\wedge\frac{dF}{d}\bigwedge_{r=0}^{\frac{n}{2}-l}dx_{k_r}\wedge\widehat{\frac{df_q}{d_q}}\wedge \frac{df_{l+1}}{d_{l+1}}\right. \\

&\left.+(-1)^{\frac{n}{2}+l+1}f_{l+1}\displaystyle\sum_{u=1}^{l+1}\widehat{\frac{dg_u}{d}}\bigwedge_{r=0}^{\frac{n}{2}-l}dx_{k_r}\wedge\widehat{\frac{df_u}{d_u}}\right].
       \end{array}
\right.$$
Applying the \v{C}ech differential $\delta$ we get
$$\begin{array}{ll}
\omega^{(l+1)}_J&=\dfrac{(-1)^{{\frac{n}{2}+2 \choose 2}+j_0+\cdots+j_{\frac{n}{2}+l+1}}Pd^ld_1\cdots d_{l+1}}{\frac{n}{2}!\cdot F_J\cdot f_{l+1}}\\

&\cdot\left[\displaystyle\sum_{m=1}^l(-1)^{m-1}g_m\widehat{\frac{dg_m}{d}}\wedge\left(\sum_{p=0}^{\frac{n}{2}+l+1}F_{j_p}dx_{j_p}\right)\bigwedge_{r=0}^{\frac{n}{2}-l-1}dx_{k_r}\bigwedge_{t=1}^{l+1}\frac{df_t}{d_t}\right. \\

&+(-1)^{l}\left(\displaystyle\sum_{p=0}^{\frac{n}{2}+l+1}F_{j_p}x_{j_p}\right)\displaystyle\bigwedge_{s=1}^l\dfrac{dg_s}{d}\displaystyle\bigwedge_{q=0}^{\frac{n}{2}-l-1}dx_{k_q}\displaystyle\bigwedge_{t=1}^{l+1}\dfrac{df_t}{d_t}\\

&+(-1)^{l+1}\displaystyle\sum_{p=0}^{\frac{n}{2}-l-1}(-1)^px_{k_p}\bigwedge_{s=1}^l\frac{dg_s}{d}\wedge\left(\sum_{r=0}^{\frac{n}{2}+l+1}F_{j_r}dx_{j_r}\right)\wedge\widehat{dx_{k_p}}\bigwedge_{t=1}^{l+1}\frac{df_t}{d_t}\\
\end{array}
$$

$$\begin{array}{ll}

&+(-1)^{\frac{n}{2}+l}\displaystyle\sum_{q=1}^l\widehat{\frac{dg_q}{d}}\wedge\frac{dF}{d}\wedge\left(\sum_{p=0}^{\frac{n}{2}+l+1}F_{j_p}dx_{j_p}\right)\bigwedge_{r=0}^{\frac{n}{2}-l-1}dx_{k_r}\wedge\widehat{\frac{df_q}{d_q}}\wedge \frac{df_{l+1}}{d_{l+1}} \\

&\left.+(-1)^{\frac{n}{2}+l+1}f_{l+1}\displaystyle\sum_{u=1}^{l+1}\widehat{\frac{dg_u}{d}}\wedge\left(\sum_{p=0}^{\frac{n}{2}+l+1}F_{j_p}dx_{j_p}\right)\bigwedge_{r=0}^{\frac{n}{2}-l-1}dx_{k_r}\wedge\widehat{\frac{df_u}{d_u}}\right]
\end{array}$$
$$\begin{array}{llll}
&=\dfrac{(-1)^{{\frac{n}{2}+2 \choose 2}+j_0+\cdots+j_{\frac{n}{2}+l+1}}Pd^{l+1}d_1\cdots d_{l+1}}{\frac{n}{2}!\cdot F_J\cdot f_{l+1}}\\

&\cdot\left[\displaystyle\sum_{m=1}^l(-1)^{m-1}g_m\widehat{\frac{dg_m}{d}}\wedge \frac{dF}{d}\bigwedge_{r=0}^{\frac{n}{2}-l-1}dx_{k_r}\bigwedge_{t=1}^{l+1}\frac{df_t}{d_t}\right. \\

&+(-1)^{l}F\displaystyle\bigwedge_{s=1}^l\dfrac{dg_s}{d}\displaystyle\bigwedge_{q=0}^{\frac{n}{2}-l-1}dx_{k_q}\displaystyle\bigwedge_{t=1}^{l+1}\dfrac{df_t}{d_t}\\

&+(-1)^{l+1}\displaystyle\sum_{p=0}^{\frac{n}{2}-l-1}(-1)^px_{k_p}\bigwedge_{s=1}^l\frac{dg_s}{d}\wedge\frac{dF}{d}\wedge\widehat{dx_{k_p}}\bigwedge_{t=1}^{l+1}\frac{df_t}{d_t}\\

&+(-1)^{\frac{n}{2}+l}\displaystyle\sum_{q=1}^l\widehat{\frac{dg_q}{d}}\wedge\frac{dF}{d}\wedge\frac{dF}{d}\bigwedge_{r=0}^{\frac{n}{2}-l-1}dx_{k_r}\wedge\widehat{\frac{df_q}{d_q}}\wedge \frac{df_{l+1}}{d_{l+1}} \\

&\left.+(-1)^{\frac{n}{2}+l+1}f_{l+1}\displaystyle\sum_{u=1}^{l+1}\widehat{\frac{dg_u}{d}}\wedge\frac{dF}{d}\bigwedge_{r=0}^{\frac{n}{2}-l-1}dx_{k_r}\wedge\widehat{\frac{df_u}{d_u}}\right].
\end{array}$$
Replacing $F=f_1g_1+\cdots+f_{\frac{n}{2}+1}g_{\frac{n}{2}+1}$ in the first three sums above we obtain the claimed equality.
\end{proof}

\noindent\textbf{Proof of Theorem \ref{15.5.2018.2}} Let $P\in\C[x_0,\ldots,x_{n+1}]$ be an homogeneous polynomial of degree $\sigma=(d-2)(\frac{n}{2}+1)$, and let
$$
\omega=\omega_P=\res\left(\frac{P\Omega}{F^{\frac{n}{2}+1}}\right).
$$
In order to compute the period of $\omega$ over the complete intersection cycle
$$
Z=\{f_1=\cdots=f_{\frac{n}{2}+1}=0\}\subseteq\P^{n+1},
$$
we apply Proposition \ref{27.10.1} several times. Recall that by Proposition \ref{cont} we can reduce ourselves to a general choice of polynomials $f_1,\ldots,f_{\frac{n}{2}+1}$, and so we can assume each
$$
Z_l:=\{f_{l+1}=\cdots=f_{\frac{n}{2}+1}=0\}\subseteq\P^{n+1}
$$
is a smooth hypersurface of $Z_{l+1}$, for each $l=0,\ldots,\frac{n}{2}$. The result of this iterative application of Proposition \ref{27.10.1} was computed in Lemma \ref{15.5.2018.3}. It follows that for $l=\frac{n}{2}+1$ we have
$$
\left.\begin{array}{ll}
({\omega}^{(\frac{n}{2}+1)})_{0\cdots n+1}&=\dfrac{(-1)^{\frac{n}{2}+1\choose 2}Pd^{\frac{n}{2}+1}d_1\cdots d_{\frac{n}{2}+1}}{\frac{n}{2}!\cdot F_0\cdots F_{n+1}}\left[
\displaystyle\sum_{m=1}^{\frac{n}{2}+1}(-1)^{m-1}g_m\widehat{\frac{dg_m}{d}}\bigwedge_{t=1}^{\frac{n}{2}+1}\frac{df_t}{d_t}\right.\\

&\left.+
(-1)^{n+1}\displaystyle\sum_{q=1}^{\frac{n}{2}+1}\widehat{\frac{dg_q}{d}}\wedge\frac{dF}{d}\wedge\widehat{\frac{df_q}{d_q}}\right].
\end{array}
\right.
$$
Replacing $F=f_1g_1+\cdots+f_{\frac{n}{2}+1}g_{\frac{n}{2}+1}$ on the above equation we obtain
$$
\left.\begin{array}{lll}
({\omega}^{(\frac{n}{2}+1)})_{0\cdots n+1}&=\dfrac{(-1)^{\frac{n}{2}+1\choose 2}Pd^{\frac{n}{2}+1}d_1\cdots d_{\frac{n}{2}+1}}{\frac{n}{2}!\cdot F_0\cdots F_{n+1}}\\

&\cdot\left[
\displaystyle\sum_{m=1}^{\frac{n}{2}+1}(-1)^{m-1}\left(\frac{d-d_m}{d}\right)g_m\widehat{\frac{dg_m}{d}}\bigwedge_{t=1}^{\frac{n}{2}+1}\frac{df_t}{d_t}\right.\\

&\left.+
(-1)^{\frac{n}{2}}\displaystyle\sum_{q=1}^{\frac{n}{2}+1}(-1)^qf_q\bigwedge_{s=1}^{\frac{n}{2}+1}\frac{dg_s}{d}\wedge\widehat{\frac{df_q}{d_q}}\right].\\

&=\dfrac{(-1)^{{n+2\choose 2}}Pe_0\cdots e_{n+1}}{\frac{n}{2}!\cdot F_0\cdots F_{n+1}}
\displaystyle\sum_{k=0}^{n+1}(-1)^kh_k\widehat{\frac{dh_k}{e_k}},
\end{array}
\right.
$$
where $e_k=\deg(h_k)$. Replacing $e_ih_i=\displaystyle\sum_{j=0}^{n+1}\frac{\partial h_i}{\partial x_j}\cdot x_j$ and $dh_i=\displaystyle\sum_{j=0}^{n+1}\frac{\partial h_i}{\partial x_j}dx_j$ 
we get
$$
({\omega}^{(\frac{n}{2}+1)})_{0\cdots n+1}=\dfrac{(-1)^{\frac{n}{2}+1}P\cdot  \det(\Jac(H))}{\frac{n}{2}!\cdot F_0\cdots F_{n+1}}
\displaystyle\sum_{k=0}^{n+1}(-1)^kx_k\widehat{dx_k}.
$$
Corollary \ref{15.5.2018} tells us what is the period of $\omega^{(\frac{n}{2}+1)}$ above, and Proposition \ref{27.10.1} tells how to obtain the period of $\omega$ from this period. Putting all together we get the desired result.
\qed

\section{Proof of Theorem \ref{cycleclass}}
\label{pt1}

After Griffiths basis theorem we know that 
\begin{equation}
\label{zcycl}    
[Z]=(\omega_{P_Z})^{\frac{n}{2},\frac{n}{2}}+\alpha\theta^\frac{n}{2}\in H^{\frac{n}{2},\frac{n}{2}}(X)
\end{equation}
for some $\alpha\in\C$ and some $P_Z\in\C[x_0,\ldots,x_{n+1}]_{(d-2)(\frac{n}{2}+1)}$. In order to compute $\alpha$ let us integrate the polarization $\theta^\frac{n}{2}$ over $Z$
$$
\deg(Z)=\frac{1}{(2\pi\sqrt{-1})^\frac{n}{2}}\int_Z\theta^\frac{n}{2}=\frac{1}{(2\pi\sqrt{-1})^{n}}\int_X \theta^\frac{n}{2}\wedge \alpha\theta^\frac{n}{2}=\alpha\cdot \deg(X),
$$
and so $\alpha=\frac{\deg(Z)}{\deg(X)}$. We will need the following fact whose proof was essentially done in the proof of \cite[Theorem 2]{CarlsonGriffiths1980}.

\begin{prop}
\label{wedge}
Let $X\subseteq\P^{n+1}$ be a smooth degree $d$ hypersurface of even dimension $n$. Let $P,Q\in\C[x_0,\ldots,x_{n+1}]_{(d-2)(\frac{n}{2}+1)}$, then
$$
\int_X \omega_P\wedge\omega_Q=\frac{-(2\pi\sqrt{-1})^n}{(\frac{n}{2}!)^2}c\cdot (d-1)^{n+2}d,
$$
where $c\in \C$ is the unique number such that 
$$
PQ\equiv c\cdot \det(\Hess(F))\hspace{3mm}(\text{mod }J^F).
$$
\end{prop}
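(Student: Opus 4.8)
The plan is to reduce the cup-product integral on $X$ to the period computation already carried out in Theorem \ref{15.5.2018.2}, by realizing $\omega_P\wedge\omega_Q$ as the residue of a suitable form on $\P^{n+1}\setminus X$ and then applying the residue/projection machinery. Concretely, I would first recall from \cite{CarlsonGriffiths1980} the formula expressing the cup product in primitive cohomology of a hypersurface in terms of the Jacobian ring: if $\omega_P=\mathrm{res}(P\Omega/F^{a+1})$ and $\omega_Q=\mathrm{res}(Q\Omega/F^{b+1})$ with $a+b=n$ (here $a=b=\frac n2$), then $\omega_P\wedge\omega_Q$ is, up to an explicit constant depending only on $n$, $d$ and the binomials $a!,b!$, the image under the isomorphism $R^F_{(d-2)(n+2)}\simeq H^{2n}_{\dR}(X)\simeq\C$ of the class $PQ\bmod J^F$. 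This is exactly the content used implicitly in the statement of Theorem \ref{15.5.2018.2}: since $J^F$ is Artinian Gorenstein of socle $(d-2)(n+2)$ and $R^F_{(d-2)(n+2)}=\C\cdot\det(\mathrm{Hess}(F))$, writing $PQ\equiv c\cdot\det(\mathrm{Hess}(F))\ (\mathrm{mod}\ J^F)$ makes $\int_X\omega_P\wedge\omega_Q$ a linear function of $c$.

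To pin down the constant I would use the Carlson–Griffiths \v{C}ech description \eqref{18.9.2018}: on the Jacobian covering $\U$ of $X$ we have $(\omega_P)^{\frac n2,\frac n2}=\frac{1}{\frac n2!}\{P\Omega_J/F_J\}_{|J|=\frac n2}$ and similarly for $Q$. The cup product in \v{C}ech cohomology is then represented by the collection $\frac{1}{(\frac n2!)^2}\{PQ\,\Omega_J/F_J\}_{|J|=n}$ with the appropriate wedge of the local pieces, and the resulting top-degree \v{C}ech class on $X$ pairs against the fundamental class. The cleanest route is to invoke Theorem \ref{15.5.2018.2} itself with $Z$ replaced by a smooth complete intersection realizing the linear form ``integration against $PQ$'': more precisely, take the chain $Z=Z_0\subseteq\cdots\subseteq Z_{\frac n2+1}=\P^{n+1}$ from \S\ref{2}, and compare $\int_X\omega_P\wedge\omega_Q$ with the period $\int_Z\omega_{PQ/(\det Jac(H))}$ when $P_Z=\det(Jac(H))$ — but since $\omega_P\wedge\omega_Q$ does not directly involve a cycle, the honest comparison is with Corollary \ref{15.5.2018} after applying the coboundary isomorphism \eqref{19.10.2018.2} once, which multiplies periods by $2\pi\sqrt{-1}$ and relates $H^{n}(X,\Omega^n_X)$-data to a top form on $\P^{n+1}$. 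Tracking this single coboundary step against the factor $(d-1)^{n+2}$ (Bézout count, $l=d-1$) and the sign $(-1)^{\binom{n+2}{2}}$ from Proposition \ref{08.10.5}, together with the normalization by $(2\pi\sqrt{-1})^n$ in the De Rham pairing, should produce exactly $\frac{-(2\pi\sqrt{-1})^n}{(\frac n2!)^2}c\cdot(d-1)^{n+2}d$.

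I expect the main obstacle to be bookkeeping of the constant: the $d$ on the right-hand side (as opposed to a pure power of $d-1$) must come from somewhere subtle, presumably from the fact that $\det(\mathrm{Hess}(F))$ and the Jacobian of the map defining the Jacobian covering differ by a degree-shift whose Euler-identity normalization contributes a factor $d$ (compare \eqref{form}, where $\Omega_f=l^{-1}\det(Jac(f))\Omega$ with $l=d-1$ for the Jacobian covering), while the overall sign $-1$ and the $(\frac n2!)^{-2}$ are forced by \eqref{18.9.2018} and the Carlson–Griffiths total-complex sign convention recorded in the remark after Proposition \ref{08.10.5}. So the strategy is: (1) write both classes in \v{C}ech form via \eqref{18.9.2018}; (2) compute the \v{C}ech cup product and identify it with $\frac{1}{(\frac n2!)^2}$ times the class of $PQ$ in $R^F_{(d-2)(n+2)}$; (3) evaluate the fundamental-class pairing by reducing — via one application of Proposition \ref{27.10.1} or directly via the known value of $\int_X\theta^n$ normalized so that $\det(\mathrm{Hess}(F))\leftrightarrow$ a fixed constant — to Corollary \ref{15.5.2018}; (4) assemble the constants, using \eqref{form} for the factor $d$ and the sign remark for $-1$. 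Since the proposition asserts the proof ``was essentially done'' in \cite{CarlsonGriffiths1980}, steps (1)–(2) can be quoted and only the constant-chasing in (3)–(4) needs to be made explicit here.
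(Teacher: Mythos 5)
Your proposal is correct and takes essentially the same route as the paper: write $(\omega_P)^{\frac n2,\frac n2}$, $(\omega_Q)^{\frac n2,\frac n2}$ in \v{C}ech form via \eqref{18.9.2018}, compute the twisted (cup) product on the Jacobian covering, apply the coboundary map of Proposition \ref{27.10.1} exactly once to push the class to $H^{n+1}(\P^{n+1},\Omega_{\P^{n+1}}^{n+1})$, and then evaluate by Corollary \ref{15.5.2018}, with the factor $d$ arising from the Euler identity / \eqref{form} and the signs from the Carlson--Griffiths convention. The middle paragraph of your proposal (comparing against $\int_Z\omega_{PQ/\det Jac(H)}$) is a digression, but the numbered steps (1)--(4) at the end match the paper's argument precisely.
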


\begin{proof}
Let $\U$ be the Jacobian covering of $\P^{n+1}$. By \eqref{18.9.2018} we know explicitly how $(\omega_P)^{\frac{n}{2},\frac{n}{2}}$ and $(\omega_Q)^{\frac{n}{2},\frac{n}{2}}$ look like in the \v{C}ech cohomology group $H^\frac{n}{2}(\U,\Omega_X^\frac{n}{2})$. Then we can also compute $(\omega_P\wedge\omega_Q)^{n,n}\in H^n(\U,\Omega_X^n)$ by performing the twisted product
$$
((\omega_P)^{\frac{n}{2},\frac{n}{2}}\wedge(\omega_Q)^{\frac{n}{2},\frac{n}{2}})_{0\cdots\widehat{m}\cdots n+1}=\left\{
	\begin{array}{ll}
		\frac{(-1)^{\frac{n}{2}+m}PQx_m\Omega_{(\frac{n}{2}+1)}F_m}{(\frac{n}{2}!)^2F_0\cdots F_{n+1}\cdot F_{\frac{n}{2}+1}}  & \mbox{if } m\le \frac{n}{2} \\
		\frac{(-1)^{\frac{n}{2}+m}PQx_m\Omega_{(\frac{n}{2})}F_m}{(\frac{n}{2}!)^2F_0\cdots F_{n+1}\cdot F_{\frac{n}{2}}} & \mbox{if } m>\frac{n}{2}
	\end{array}
\right.
$$
where $\Omega_{(i)}=\iota_{\frac{\partial}{\partial x_{i}}}(\Omega)$ for $i=\frac{n}{2},\frac{n}{2}+1$. A direct application of Proposition \ref{27.10.1} gives us 
$$
\int_{\P^{n+1}} \widetilde{\omega}=2\pi\sqrt{-1}\int_X \omega_P\wedge\omega_Q,
$$
for 
$$
\widetilde{\omega}=\frac{d\cdot(-1)^{\frac{n}{2}}PQ\Omega}{(\frac{n}{2}!)^2F_0\cdots F_{n+1}}\in C^{n+1}(\U,\Omega_{\P^{n+1}}^{n+1}).
$$
The result follows from Corollary \ref{15.5.2018}.
\end{proof}

\noindent{\textbf{Proof of Theorem \ref{cycleclass}}} Let $R_Z:=\frac{-\frac{n}{2}!}{\deg(X)}\det(\Jac(H))\in \C[x_0,\ldots,x_{n+1}]_{(d-2)(\frac{n}{2}+1)}$, we claim that $P_Z=R_Z$ (where $P_Z$ is given by \eqref{zcycl}). In fact, since the wedge product on $H^n_\dR(X)_{\prim}$ is not degenerated it is enough to check that
$$
\frac{1}{(2\pi\sqrt{-1})^\frac{n}{2}}\int_Z\omega_P=\frac{1}{(2\pi\sqrt{-1})^n}\int_X\omega_{P_Z}\wedge\omega_P=\frac{1}{(2\pi\sqrt{-1})^n}\int_X\omega_{R_Z}\wedge\omega_P, 
$$
$\forall P\in\C[x_0,\ldots,x_{n+1}]_{(d-2)(\frac{n}{2}+1)}$, which follows from Theorem \ref{15.5.2018.2} and Proposition \ref{wedge}.
\qed

\section{Hodge locus}
\label{hodgelocus}
Before going to the applications of Theorem \ref{cycleclass} and Theorem \ref{15.5.2018.2}, let us recall the Hodge locus associated to a Hodge cycle inside a smooth degree $d$ hypersurface of the projective space $\P^{n+1}$, of even dimension $n$.

\begin{dfn}
\label{5.11.2018}
Let $\pi:X\rightarrow T$ be the family of smooth degree $d$ hypersurfaces of $\P^{n+1}$, of even dimension $n$. Fix a parameter $0\in T$, and a Hodge cycle $\lambda_0\in H^n(X_0,\Z)\cap H^{\frac{n}{2},\frac{n}{2}}(X_0)$. Since $\pi$ is a locally trivial fibration, we can extend $\lambda_0$ to a polydisc around $0\in T$ by parallel transport. If we denote this extension by $\lambda_t\in H^n(X_t,\Z)$, the \textit{Hodge locus associated to $\lambda_0$} is
$$
V_{\lambda_0}:=\{t\in (T,0): \lambda_t\in H^n(X_t,\Z)\cap H^{\frac{n}{2},\frac{n}{2}}(X_t) \},
$$
where $(T,0)$ denotes the germ of neighbourhoods of $0\in T$ in the analytic topology. Considering $\omega_1,\ldots,\omega_k\in H^n_\dR(X/T)$ such that they form a basis for $F^{\frac{n}{2}+1}H^n_\dR(X_t)$ for every $t$ in a neighbourhood of $0\in T$, we can induce an structure of analytic space in the Hodge locus as
$$
\mathcal{O}_{V_{\lambda_0}}=\frac{\mathcal{O}_{(T,0)}}{\langle f_1,\ldots,f_k\rangle},
$$
where $\lambda_t=\sum_{i=1}^kf_i(t)\omega_i(t)$ for every $t\in(T,0)$. This structure might be non-reduced, see for instance \cite[page 154, Exercise 2]{vo03}.
\end{dfn}

We will end this section with a restatement of a well known fact relating periods of a Hodge cycle, to the Zariski tangent space of its associated Hodge locus.

\begin{prop}
\label{17.10.2018}
Let $T\subseteq\C[x_0,\ldots,x_{n+1}]_d$ be the parameter space of smooth degree $d$ hypersurfaces of $\P^{n+1}$, of even dimension $n$. For $t\in T$, let $X_t=\{F=0\}\subseteq \P^{n+1}$ be the corresponding hypersurface. For every Hodge cycle $\lambda\in H^n(X_t,\Z)\cap H^{\frac{n}{2},\frac{n}{2}}(X_t)$, we can compute the Zariski tangent space of its associated Hodge locus $V_\lambda$ as
$$
T_tV_\lambda=\left\{P\in \C[x_0,\ldots,x_{n+1}]_d: \int_{\delta}\res\left(\frac{PQ\Omega}{F^{\frac{n}{2}+1}}\right)=0, \forall Q\in \C[x_0,\ldots,x_{n+1}]_{d\frac{n}{2}-n-2}\right\},
$$
where $\delta\in H_n(X_t,\Z)$ is the dual of $\lambda\in H^n(X_t,\Z)$ (note that $H_n(X_t,\Z)$ is free).
\end{prop}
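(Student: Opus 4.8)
The plan is to realize $V_\lambda$ near $t$ as the zero scheme of the defining functions of Definition \ref{5.11.2018}, and then to differentiate the resulting period integrals with respect to the parameter. Over a neighbourhood of $t$ in $T$ fix a holomorphic frame $\omega_1,...,\omega_k$ of the subbundle $F^{\frac{n}{2}+1}H^n_\dR(X/T)$; let $\lambda_s$ be the flat extension of $\lambda$ and let $\delta_s\in H_n(X_s,\Z)$ be the flat extension of $\delta$, which at every point is Poincar\'e dual to $\lambda_s$. Since $\lambda_s$ is integral, it is a Hodge class exactly when $\lambda_s\in F^{\frac{n}{2}}H^n_\dR(X_s)$, and because $F^{\frac{n}{2}}$ and $F^{\frac{n}{2}+1}$ are orthogonal complements for the cup-product pairing, this is equivalent to $\int_{\delta_s}\omega_i(s)=0$ for all $i$. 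Hence, up to a fixed nonzero constant, the functions $f_i(s):=\int_{\delta_s}\omega_i(s)$ generate the ideal of $V_\lambda$ near $t$ (this is the scheme structure of Definition \ref{5.11.2018}), so that $T_tV_\lambda=\bigcap_i\ker\big(df_i(t)\big)$ inside $T_tT=\C[x_0,...,x_{n+1}]_d$.

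Next I would pick this frame compatibly with Griffiths' description of $H^n_\dR(X)$. Since $H^{n-2}_\dR(X)=\C\cdot\theta^{\frac{n}{2}-1}$ has type $(\frac{n}{2}-1,\frac{n}{2}-1)$, one has $F^{\frac{n}{2}+1}H^n_\dR(X)=F^{\frac{n}{2}+1}H^n_\dR(X)_{prim}$, so each $\omega_i$ may be taken of the form $res\left(\frac{Q_i\Omega}{F^{q_i+1}}\right)$ with $0\le q_i\le\frac{n}{2}-1$ and $Q_i\in\C[x_0,...,x_{n+1}]_{d(q_i+1)-n-2}$, chosen for each fixed $q_i$ to descend to a basis of $R^F_{d(q_i+1)-n-2}$; these extend holomorphically over a neighbourhood of $t$ just by varying $F$. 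Given $P\in\C[x_0,...,x_{n+1}]_d$, consider the family $F+sP$. Using that $\frac{Q_i\Omega}{(F+sP)^{q_i+1}}$ is a closed top-degree rational form on $\P^{n+1}\setminus X_s$ and that $\delta_s$ is flat, one may work with a tube around $\delta_s$ held fixed for $s$ near $0$ (legitimate by local topological triviality of $\pi$), and differentiating under the integral sign yields
$$\frac{d}{ds}\Big|_{s=0}\int_{\delta_s}res\left(\frac{Q_i\Omega}{(F+sP)^{q_i+1}}\right)=-(q_i+1)\int_{\delta}res\left(\frac{PQ_i\Omega}{F^{q_i+2}}\right).$$
Thus $df_i(t)(P)$ is $-(q_i+1)$ times $\int_{\delta}res\left(\frac{PQ_i\Omega}{F^{q_i+2}}\right)$.

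It then remains to identify which of these differentials are nontrivial. If $q_i\le\frac{n}{2}-2$ then $res\left(\frac{PQ_i\Omega}{F^{q_i+2}}\right)\in F^{n-q_i-1}H^n_\dR(X)\subseteq F^{\frac{n}{2}+1}H^n_\dR(X)$; since $\delta$ is Poincar\'e dual to $\lambda\in H^{\frac{n}{2},\frac{n}{2}}(X)$ and $F^{\frac{n}{2}+1}=(F^{\frac{n}{2}})^\perp$ pairs trivially with $H^{\frac{n}{2},\frac{n}{2}}(X)$, this integral vanishes identically, so such $\omega_i$ impose no condition. The remaining conditions come from $q_i=\frac{n}{2}-1$: as $Q_i$ runs over a basis of $R^F_{d\frac{n}{2}-n-2}$ we obtain $\int_{\delta}res\left(\frac{PQ_i\Omega}{F^{\frac{n}{2}+1}}\right)=0$. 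Finally, Griffiths' pole-order reduction shows that if $PQ\in J^F$ then $res\left(\frac{PQ\Omega}{F^{\frac{n}{2}+1}}\right)\in F^{\frac{n}{2}+1}H^n_\dR(X)$ and is again killed by $\delta$; hence $\int_{\delta}res\left(\frac{PQ\Omega}{F^{\frac{n}{2}+1}}\right)$ depends only on the class of $PQ$ in $R^F_{d(\frac{n}{2}+1)-n-2}$, and the conditions for $Q$ in a basis of $R^F_{d\frac{n}{2}-n-2}$ are equivalent to the conditions for all $Q\in\C[x_0,...,x_{n+1}]_{d\frac{n}{2}-n-2}$. This yields the asserted description of $T_tV_\lambda$.

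The step I expect to be the main obstacle is the differentiation of the period: one must set up the family of tube cycles around $\delta_s$ and invoke the local topological triviality of $\pi$ together with the closedness of the top-degree form, so that the $s$-derivative of the period equals the period of the de Rham class of the $s$-derivative of the integrand. Everything after that is bookkeeping with the Hodge filtration and with Griffiths residues.
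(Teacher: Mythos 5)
Your proof is correct, and it arrives at the statement by a genuinely self-contained route rather than by the paper's citations. The paper's proof invokes two black-box facts from Voisin's book: Lemma 5.16 ($T_tV_\lambda=\ker\overline{\nabla}_t(\lambda_t)$) and Theorem 6.17 (identification of $\overline{\nabla}_t$ for hypersurfaces with multiplication in the Jacobian ring). You re-derive exactly this for the case at hand: you realize $V_\lambda$ scheme-theoretically as the vanishing locus of the periods $f_i(s)=\int_{\delta_s}\omega_i(s)$ against a Griffiths-residue frame of $F^{\frac{n}{2}+1}$, compute $df_i(t)(P)$ by differentiating the rational integrand under the tube integral, observe that the contributions with pole order $q_i+1\le\frac{n}{2}$ land in $F^{\frac{n}{2}+1}$ and hence pair to zero against $\lambda\in F^{\frac{n}{2}}$, and finally use Griffiths' pole-order reduction (if $PQ\in J^F$ then the residue again lies in $F^{\frac{n}{2}+1}$) to pass from a basis of $R^F_{d\frac{n}{2}-n-2}$ to all $Q$. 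These are exactly the steps packaged into Voisin's two results, so the substantive mathematics is the same; what your version buys is that it makes visible two points the citation leaves implicit, namely which pole orders actually impose conditions and why the condition for a basis of the Jacobian ring is equivalent to the condition for every polynomial $Q$. One small remark: the defining functions $f_i$ in the paper's Definition \ref{5.11.2018} are written rather loosely there; your interpretation of them as the pairings $\int_{\delta_s}\omega_i(s)$ (i.e. the coefficients of $\lambda_s$ against a basis of $F^{\frac{n}{2}+1}$ via the intersection form) is the standard one and is what the paper in fact uses.
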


\begin{proof}
We know from Voisin \cite[Lemma 5.16]{vo03}, that 
$$
T_tV_\lambda=\text{Ker }\overline{\nabla}_t(\lambda_t),
$$
where $\overline{\nabla}_t$ is induced by the infinitesimal variations of Hodge structures. This map is well known in the case of hypersurfaces and corresponds with 
$$
\overline{\nabla}_t: H^{\frac{n}{2},\frac{n}{2}}(X_t)\times T_tT\rightarrow H^{\frac{n}{2}-1,\frac{n}{2}+1}(X_t)^*,
$$
given by the multiplication map (see \cite[Theorem 6.17]{vo03})
$$
(\overline{\nabla}_t(\lambda_t,P))(\res\left(\frac{Q\Omega}{F^\frac{n}{2}}\right))=\int_\delta \res\left(\frac{PQ\Omega}{F^{\frac{n}{2}+1}}\right).
$$
Note that we have identified $P\in \C[x_0,\ldots,x_{n+1}]_d\simeq T_tT$.
\end{proof}

\section{First applications}
\label{3}
\begin{dfn}
\label{4.8.2018}
We will say that an algebraic cycle $\delta\in \CH^n(X)$ is of \textit{complete intersection type} if 
$$
\delta=\sum_{i=1}^kn_i\cdot Z_i,
$$
for $Z_1,\ldots,Z_k\subseteq X$ a set of $\frac{n}{2}$-dimensional subvarieties that are complete intersection inside $\P^{n+1}$, given by
$$
Z_i=\{f_{i,1}=\cdots=f_{i,\frac{n}{2}+1}=0\},
$$
for every $i=1,\ldots,k$, such that there exist $g_{i,1},\ldots,g_{i,k}\in \C[x_0,\ldots,x_{n+1}]$ with
$$
F=\sum_{j=1}^{\frac{n}{2}+1}f_{i,j}g_{i,j}.
$$
We denote this subspace by $\CH^n(X)_{cit}$. For every $\delta\in \CH^n(X)_{cit}$, we define its \textit{associated polynomial}
$$
P_\delta:=\sum_{i=1}^k n_i\cdot \det(\Jac(H_i))\in R^F_{(d-2)(\frac{n}{2}+1)},
$$
where $H_i:=(f_{i,1},g_{i,1},\ldots,f_{i,\frac{n}{2}+1},g_{i,\frac{n}{2}+1})$. We define its \textit{degree} as its degree as an element of $H_n(\P^{n+1},\Z)$, i.e. $\deg(\delta):=\sum_{i=1}^k n_i\cdot \deg(Z_i)$. It follows from Theorem \ref{cycleclass} and the linearity of the cycle class map that
\begin{equation}
\label{cyclcit}
[\delta]=\frac{\deg(\delta)}{\deg(X)}\theta^\frac{n}{2}-\frac{\frac{n}{2}!}{\deg(X)}(\omega_{P_\delta})^{\frac{n}{2},\frac{n}{2}}.
\end{equation}
\end{dfn}

\begin{cor}
\label{4.8.2018.3}
Let $X\subseteq \mathbb{P}^{n+1}$ be a smooth hypersurface given by $$X=\{F=0\}.$$ If $\delta,\mu\in \CH^n(X)_{cit}$ are complete intersection type algebraic cycles, then
\begin{itemize}
    \item[(i)] $P_\delta\in J^F$ if and only if $[\delta]=\alpha\cdot[X\cap \P^{\frac{n}{2}+1}]$, for $\alpha=\deg(\delta)/\deg(X)$.
    \item[(ii)] Let $c\in\C$ be the unique number such that $P_\delta\cdot P_\mu\equiv c\cdot \det(\Hess(F))$ (mod $J^F$), then
    \begin{equation}
    \label{intform}
    \delta\cdot \mu=\frac{\deg(\delta)\cdot \deg(\mu)}{\deg(X)}-c\cdot\frac{(\deg(X)-1)^{n+2}}{\deg(X)}.
    \end{equation}
\end{itemize}
\end{cor}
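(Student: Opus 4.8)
The plan is to deduce Corollary \ref{4.8.2018.3} directly from Theorem \ref{cycleclass} (in the form \eqref{cyclcit}), Proposition \ref{wedge}, and the basic facts about Artinian Gorenstein algebras recalled in \S\ref{rochebday}.

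For part (i): by \eqref{cyclcit} we have $[\delta]-\alpha\theta^{\frac{n}{2}} = -\frac{\frac{n}{2}!}{\deg(X)}(\omega_{P_\delta})^{\frac{n}{2},\frac{n}{2}}$, so $[\delta]=\alpha\cdot[X\cap\P^{\frac{n}{2}+1}]$ (noting $[X\cap\P^{\frac{n}{2}+1}]=\frac{\deg(X)}{\deg(X)}\theta^{\frac{n}{2}}\cdot$\,(the class of a linear section, whose primitive part vanishes), i.e. $[X\cap\P^{\frac{n}{2}+1}]$ is a rational multiple of $\theta^{\frac{n}{2}}$) if and only if $(\omega_{P_\delta})^{\frac{n}{2},\frac{n}{2}}=0$ in $H^{\frac{n}{2},\frac{n}{2}}(X)_{prim}$. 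By Griffiths' description of the primitive cohomology, $(\omega_{P_\delta})^{\frac{n}{2},\frac{n}{2}}=0$ precisely when $P_\delta$ maps to $0$ in the graded piece $R^F_{(d-2)(\frac{n}{2}+1)}$ of the Jacobian ring, i.e. $P_\delta\in J^F$. (One should double-check the constant: since $\deg(X\cap\P^{\frac{n}{2}+1})=\deg(X)$, the normalization forces $\alpha=\deg(\delta)/\deg(X)$ exactly, matching the statement.) I would want to be a little careful here about the exact sense in which $[X\cap\P^{\frac{n}{2}+1}]$ equals a multiple of $\theta^{\frac n2}$ — this is the Lefschetz fact quoted in the introduction — and about whether the correspondence between $\omega_P$ and $P\bmod J^F$ being an isomorphism on the relevant graded piece (which follows from Macaulay/Griffiths, since $(d-2)(\frac n2+1)=d\cdot\frac n2 - n - 2$ is exactly the Griffiths degree for the middle piece) is being invoked cleanly.

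For part (ii): the intersection number $\delta\cdot\mu$ is computed cohomologically as $\frac{1}{(2\pi\sqrt{-1})^n}\int_X[\delta]\wedge[\mu]$. Expand using \eqref{cyclcit}: the $\theta^{\frac n2}\wedge\theta^{\frac n2}$ term contributes $\frac{\deg(\delta)\deg(\mu)}{\deg(X)^2}\cdot\frac{1}{(2\pi\sqrt{-1})^n}\int_X\theta^n = \frac{\deg(\delta)\deg(\mu)}{\deg(X)^2}\cdot\deg(X)=\frac{\deg(\delta)\deg(\mu)}{\deg(X)}$; the cross terms $\theta^{\frac n2}\wedge\omega_{P}$ vanish because $\omega_{P_\mu}^{\frac n2,\frac n2}$ lies in the primitive cohomology, which is orthogonal to $\theta^{\frac n2}$; and the $\omega_{P_\delta}\wedge\omega_{P_\mu}$ term is evaluated by Proposition \ref{wedge}, giving $\left(\frac{\frac n2!}{\deg(X)}\right)^2\cdot\frac{1}{(2\pi\sqrt{-1})^n}\cdot\frac{-(2\pi\sqrt{-1})^n}{(\frac n2!)^2}c(d-1)^{n+2}d = -c\frac{(d-1)^{n+2}d}{\deg(X)^2} = -c\frac{(d-1)^{n+2}}{\deg(X)}$, using $d=\deg(X)$. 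Summing gives \eqref{intform}.

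The only real content is bookkeeping: matching normalization constants and the sign, and invoking the orthogonality of $\theta^{\frac n2}$ against primitive classes. The main (mild) obstacle is ensuring the constant $c$ in Proposition \ref{wedge} is the \emph{same} $c$ as in the statement of part (ii) — it is, since both are defined by $P_\delta P_\mu\equiv c\cdot\det(\mathrm{Hess}(F))\pmod{J^F}$ and this $c$ is well-defined by Macaulay's theorem (Theorem \ref{08.10.6}(i)) applied to $J^F$, which is Artinian Gorenstein of socle $(d-2)(n+2)=2(d-2)(\frac n2+1)$. No genuinely new idea is needed beyond assembling these ingredients.
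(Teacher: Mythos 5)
Your proposal is correct and matches the paper's own (very terse) proof: part (i) from \eqref{cyclcit} plus Griffiths' description of primitive cohomology via the Jacobian ring, part (ii) by expanding $\frac{1}{(2\pi\sqrt{-1})^n}\int_X[\delta]\wedge[\mu]$ using \eqref{cyclcit}, the orthogonality of $\theta^{n/2}$ against primitive classes, $\int_X\theta^n=(2\pi\sqrt{-1})^n\deg(X)$, and Proposition \ref{wedge}. The only cosmetic difference is that you do not explicitly cite Corollary \ref{15.5.2018} (which the paper invokes to pin down $\int_X\theta^n$ and to guarantee $\det(\mathrm{Hess}(F))$ spans the socle, so $c$ is well defined), but you supply the same facts inline.
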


\begin{proof}
The first part is a direct application of Griffiths basis theorem and \eqref{cyclcit}. The second part is a direct application of the fact
$$
\delta\cdot\mu=\frac{1}{(2\pi\sqrt{-1})^n}\int_X[\delta]\wedge[\mu],
$$
together with equation \eqref{cyclcit}, Corollary \ref{15.5.2018} and Proposition \ref{wedge}.
\end{proof}


\begin{rmk}
It follows from \eqref{intform} that for every pair of algebraic cycles $\delta,\mu\in \CH^\frac{n}{2}(X)$, the unique number $c\in\C$ such that $P_\delta\cdot P_\mu\equiv c\cdot \det(\Hess(F))$ (mod $J^F$) is in fact a rational number such that
\begin{center}
$c\cdot (d-1)^{n+2}\in\Z$ and $c\cdot (d-1)^{n+2}\equiv \deg(\delta)\cdot \deg(\mu)$ (mod $d$).
\end{center}
In general, it is not known how to determine whether a given element of Griffiths basis $(\omega_P)^{\frac{n}{2},\frac{n}{2}}\in H^{\frac{n}{2},\frac{n}{2}}(X)$ is an integral or rational class in terms of the polynomial $P\in \C[x_0,\ldots,x_{n+1}]_{(d-2)(\frac{n}{2}+1)}$. Equation \eqref{intform} gives us a (computable) necessary condition: If $(\omega_P)^{\frac{n}{2},\frac{n}{2}}\in H^{\frac{n}{2},\frac{n}{2}}(X)\cap H^n(X,\Z)$ then for every complete intersection type algebraic cycle $\delta\in \CH^\frac{n}{2}(X)_{cit}$
\begin{equation}
P\cdot P_\delta\equiv c\cdot \det(\Hess(F)) \hspace{5mm}(\text{mod }J^F),
\end{equation}
for some $c\in \Q$ such that $c\cdot (d-1)^{n+2}\in\frac{n}{2}!\Z$. A further condition that follows from Proposition \ref{wedge} is 
\begin{equation}
P^2\equiv c_P\cdot \det(\Hess(F))\hspace{5mm}(\text{mod }J^F),
\end{equation}
for some $c_P\in \Q$ such that $c_P (d-1)^{n+2}d\in(\frac{n}{2}!)^2\Z$.
\end{rmk}

\begin{rmk}
Another observation we can derive from Theorem \ref{15.5.2018.2} is that each period is of the form $(2\pi\sqrt{-1})^{\frac{n}{2}}$ times an element from a number field $k$, where $k$ is the smallest number field containing the coefficients of $f_1,g_1,\ldots,f_{\frac{n}{2}+1},g_{\frac{n}{2}+1}$, i.e. the periods belong to the same field where we can decompose $F$ as $f_1g_1+\cdots+f_{\frac{n}{2}+1}g_{\frac{n}{2}+1}$. This was already mentioned in Deligne's work about absolute Hodge cycles (see \cite[Proposition 7.1]{Deligne1982}).
\end{rmk}

One of the main ingredients of the proof of Theorem \ref{11.10.2018} is the description of the Zariski tangent space of the Hodge locus $V_{[\delta]}$ as the degree $d$ part of the quotient ideal $(J^F:P_\delta)$. 

\begin{cor}
\label{16.10.2018}
Let $T$ be the parameter space of smooth degree $d$ hypersurfaces of $\P^{n+1}$, of even dimension $n$. For $t\in T$, let $X_t=\{F=0\}\subseteq\P^{n+1}$ be the corresponding hypersurface. If $\delta\in \CH^n(X_t)_{cit}$ is a complete intersection type algebraic cycle, then 
$$
T_tV_{[\delta]}=(J^F:P_\delta)_d.
$$
\end{cor}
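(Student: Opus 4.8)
The strategy is to combine Proposition~\ref{17.10.2018}, which computes the Zariski tangent space of the Hodge locus as a set of period conditions, with the explicit period formula of Theorem~\ref{15.5.2018.2} and the Macaulay/Gorenstein machinery of \S\ref{rochebday}. Concretely, Proposition~\ref{17.10.2018} gives
$$
T_tV_{[\delta]}=\left\{P\in\C[x]_d:\int_\delta res\left(\frac{PQ\Omega}{F^{\frac{n}{2}+1}}\right)=0,\ \forall Q\in\C[x]_{d\frac{n}{2}-n-2}\right\},
$$
where here I am writing $\delta$ for the homology cycle dual to $[\delta]$; by linearity of both the integral and the cycle class map it suffices to treat $\delta$ a single complete intersection $Z$ and then sum. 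The first step is to rewrite the integrand: for $\delta=Z$ of complete intersection type, $\int_Z res\left(\frac{PQ\Omega}{F^{\frac{n}{2}+1}}\right)$ is, up to the universal constant $\frac{(2\pi\sqrt{-1})^{\frac n2}}{\frac n2!}(d-1)^{n+2}$, equal to the number $c$ with $PQ\cdot\det(Jac(H))\equiv c\cdot\det(Hess(F))\pmod{J^F}$, by Theorem~\ref{15.5.2018.2} applied to the degree $(d-2)(\frac n2+1)$ polynomial $PQ$ (note $\deg(PQ)=d+d\frac n2-n-2=(d-2)(\frac n2+1)$, so the hypothesis of that theorem is met). Up to the constant $\frac{\frac n2!}{\deg(\delta)}$ relating $\det(Jac(H))$ to $P_\delta$ via Definition~\ref{4.8.2018}, this $c$ is exactly the coefficient of $PQ\cdot P_\delta$ on $\det(Hess(F))$ in $R^F$.

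\textbf{Key reduction via Gorenstein duality.} Thus $P\in T_tV_{[\delta]}$ if and only if, for every $Q\in\C[x]_{d\frac n2-n-2}$, the product $P\cdot P_\delta\cdot Q$ lies in $J^F$ (i.e. its image in $R^F_{(d-2)(n+2)}=\C\cdot\det(Hess(F))$ vanishes). Now invoke Macaulay's Theorem~\ref{08.10.6} for $J^F$ (which is Artinian Gorenstein of socle $2\sigma=(d-2)(n+2)$, as recalled after Theorem~\ref{15.5.2018.2}): the pairing $R^F_{e}\times R^F_{(d-2)(n+2)-e}\to R^F_{(d-2)(n+2)}$ is perfect. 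The element $P\cdot P_\delta\in R^F_{d+(d-2)(\frac n2+1)}=R^F_{(d-2)(n+2)-(d\frac n2-n-2)}$ pairs to zero against all of $R^F_{d\frac n2-n-2}$ precisely when $P\cdot P_\delta=0$ in $R^F$, i.e. when $P\cdot P_\delta\in J^F$. By definition of the quotient ideal this says exactly $P\in(J^F:P_\delta)$, and since $P$ has degree $d$ we conclude $T_tV_{[\delta]}=(J^F:P_\delta)_d$.

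\textbf{Main obstacle.} The routine parts are the degree bookkeeping and assembling the chain of identifications; the one genuine subtlety is handling the case $P_\delta\in J^F$ (equivalently $[\delta]$ a rational multiple of $\theta^{\frac n2}$, cf.\ Corollary~\ref{4.8.2018.3}(i)), where $P\cdot P_\delta\in J^F$ for every $P$, so the right-hand side is all of $\C[x]_d\cong T_tT$; one must check the left-hand side degenerates the same way, which is immediate since then $[\delta]$ is a multiple of $\theta^{\frac n2}$ and stays Hodge in the whole family, so $V_{[\delta]}=(T,0)$ and $T_tV_{[\delta]}=T_tT$. A second point requiring care is the passage from the integral over the topological cycle $\delta$ dual to $[\delta]$ (as in Proposition~\ref{17.10.2018}) to the integral over the algebraic cycle $Z$ (as in Theorem~\ref{15.5.2018.2}): these agree because $[\delta]$ \emph{is} the cycle class of the algebraic cycle and the period pairing of Proposition~\ref{17.10.2018} is, by Poincaré duality, the same as integration over that algebraic cycle — so one simply cites the definition of periods in \S\ref{2} and the factorization of the cycle class map. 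Once these two clarifications are in place, the proof is the short chain: Proposition~\ref{17.10.2018} $\Rightarrow$ Theorem~\ref{15.5.2018.2} (rewrite each period as a residue-pairing coefficient) $\Rightarrow$ Macaulay duality for $J^F$ $\Rightarrow$ definition of $(J^F:P_\delta)$.
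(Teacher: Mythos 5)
Your proof follows essentially the same route as the paper's: Proposition~\ref{17.10.2018} to express $T_tV_{[\delta]}$ as period conditions, Theorem~\ref{15.5.2018.2} (extended by linearity to $\delta$) to convert each period into a condition of the form $PQ\cdot P_\delta\in J^F$, then item (ii) of Macaulay's Theorem~\ref{08.10.6} to drop the universal quantifier over $Q$ and obtain $P\cdot P_\delta\in J^F$, i.e. $P\in(J^F:P_\delta)_d$. The degree bookkeeping you give is correct, and the observation that the chain degenerates gracefully when $P_\delta\in J^F$ (so both sides become all of $\C[x]_d$) is true, though it is automatic from the argument rather than requiring a separate check. One small inaccuracy: there is no factor $\frac{\frac{n}{2}!}{\deg(\delta)}$ relating $\det(Jac(H))$ to $P_\delta$ in Definition~\ref{4.8.2018} — there $P_\delta$ is defined as $\sum_i n_i\det(Jac(H_i))$ with no extra normalization; the constant $-\frac{\frac{n}{2}!}{\deg(X)}$ you have in mind appears instead in the cycle class formula of Theorem~\ref{cycleclass}. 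This slip is harmless because only the vanishing of the periods enters, but worth correcting for precision.
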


\begin{proof}
By Proposition \ref{17.10.2018} and Theorem \ref{15.5.2018.2} we have
$$
T_tV_{[\delta]}=\{P\in\C[x_0,\ldots,x_{n+1}]_d: P\cdot Q\cdot P_\delta\in J^F, \forall Q\in \C[x_0,\ldots,x_{n+1}]_{d\frac{n}{2}-n-2}\}.
$$
By item (ii) of Macaulay's Theorem \ref{08.10.6} applied to the Jacobian ring $R^F$, we conclude
$$
T_tV_{[\delta]}=\{P\in \C[x_0,\ldots,x_{n+1}]_d: P\cdot P_\delta\in J^F\}=(J^F:P_\delta)_d.
$$
\end{proof}

In order to prove Theorem \ref{11.10.2018} we will use Corollary \ref{16.10.2018} for $t=0\in T$ corresponding to the Fermat variety, and $\delta=\P^\frac{n}{2}\in \CH^n(X_0)$ a linear cycle inside it.  

\begin{cor}
\label{2.9.2018.2}
Let
$$
X=\{x_0^d+\cdots+x_{n+1}^d=0\}
$$ 
be the Fermat variety. For $\alpha_0,\alpha_2,\ldots,\alpha_n\in \{1,3,\ldots,2d-1\}$ consider 
\begin{equation}
\label{17.10.2018.2}
\P_\alpha^{\frac{n}{2}}:=\{x_0-\zeta_{2d}^{\alpha_0}x_1=\cdots=x_n-\zeta_{2d}^{\alpha_n}x_{n+1}=0\},    
\end{equation}
and $\delta:=\P_\alpha^\frac{n}{2}$. Its associated polynomial is 
\begin{equation}
\label{cycllinfer}
P_\delta=d^{\frac{n}{2}+1}\zeta_{2d}^{\alpha_0+\cdots+\alpha_n}\prod_{j=1}^{\frac{n}{2}+1}\left(\sum_{l=0}^{d-2}x_{2j-2}^{d-2-l}\zeta_{2d}^{\alpha_{2j-2}l}x_{2j-1}^l\right).   
\end{equation}
In particular 
$$
\P^\frac{n}{2}_\alpha\cdot\P^\frac{n}{2}_\beta=\frac{1-(1-d)^{m+1}}{d}
$$
where $m=\text{dim }\P^\frac{n}{2}_\alpha\cap\P^\frac{n}{2}_\beta$.
\end{cor}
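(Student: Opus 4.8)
The plan is to handle the two assertions in turn; both localize to the pairs of variables $(x_{2j-2},x_{2j-1})$, $j=1,\dots,\frac n2+1$.

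\emph{Computation of $P_\delta$.} Since each $\alpha_{2j-2}$ is odd, $(\zeta_{2d}^{\alpha_{2j-2}})^{d}=(\zeta_{2d}^{d})^{\alpha_{2j-2}}=(-1)^{\alpha_{2j-2}}=-1$, hence
$$
x_{2j-2}^{d}+x_{2j-1}^{d}=x_{2j-2}^{d}-(\zeta_{2d}^{\alpha_{2j-2}}x_{2j-1})^{d}=\Big(x_{2j-2}-\zeta_{2d}^{\alpha_{2j-2}}x_{2j-1}\Big)\sum_{l=0}^{d-1}\zeta_{2d}^{\alpha_{2j-2}l}x_{2j-2}^{d-1-l}x_{2j-1}^{l}.
$$
Summing over $j$ writes $F=\sum_{j}f_{j}g_{j}$ with $f_{j}=x_{2j-2}-\zeta_{2d}^{\alpha_{2j-2}}x_{2j-1}$ and $g_{j}=\sum_{l=0}^{d-1}\zeta_{2d}^{\alpha_{2j-2}l}x_{2j-2}^{d-1-l}x_{2j-1}^{l}$. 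As the $f_{j}$ involve pairwise disjoint variables, they are linearly independent, so $\P_\alpha^{\frac n2}=\{f_{1}=\cdots=f_{\frac n2+1}=0\}$ is a smooth linear subspace, a complete intersection with $I(\P_\alpha^{\frac n2})=\langle f_{1},\dots,f_{\frac n2+1}\rangle$, and $\P_\alpha^{\frac n2}\subseteq X$ because $F\in I(\P_\alpha^{\frac n2})$. Thus $\delta$ is of complete intersection type and, by Definition \ref{4.8.2018}, $P_\delta=det(Jac(H))$ for $H=(f_{1},g_{1},\dots,f_{\frac n2+1},g_{\frac n2+1})$. Since $f_{j},g_{j}$ depend only on $x_{2j-2},x_{2j-1}$, the matrix $Jac(H)$ is block diagonal with $2\times 2$ blocks; I would expand the $j$-th block determinant as $\partial_{x_{2j-1}}g_{j}+\zeta_{2d}^{\alpha_{2j-2}}\,\partial_{x_{2j-2}}g_{j}$ and use that the coefficients telescope through $(l+1)+(d-1-l)=d$ to get $d\,\zeta_{2d}^{\alpha_{2j-2}}\sum_{l=0}^{d-2}\zeta_{2d}^{\alpha_{2j-2}l}x_{2j-2}^{d-2-l}x_{2j-1}^{l}$. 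Multiplying these $\frac n2+1$ factors gives \eqref{cycllinfer}.

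\emph{Computation of the intersection number.} Here I would apply Corollary \ref{4.8.2018.3}(ii) to $\delta=\P_\alpha^{\frac n2}$ and $\mu=\P_\beta^{\frac n2}$, using $deg(\delta)=deg(\mu)=1$, $deg(X)=d$, which reduces everything to the constant $c$ determined by $P_\delta P_\mu\equiv c\cdot det(Hess(F))\pmod{J^{F}}$. For the Fermat, $J^{F}=\langle x_{0}^{d-1},\dots,x_{n+1}^{d-1}\rangle$ and $det(Hess(F))=(d(d-1))^{n+2}(x_{0}\cdots x_{n+1})^{d-2}$. Writing $a_{j}=\zeta_{2d}^{\alpha_{2j-2}}$, $b_{j}=\zeta_{2d}^{\beta_{2j-2}}$ and inserting \eqref{cycllinfer}, $P_\delta P_\mu$ equals $d^{n+2}\prod_{j}a_{j}b_{j}$ times a product over $j$ of $\big(\sum_{l}a_{j}^{l}x_{2j-2}^{d-2-l}x_{2j-1}^{l}\big)\big(\sum_{l'}b_{j}^{l'}x_{2j-2}^{d-2-l'}x_{2j-1}^{l'}\big)$; modulo $\langle x_{2j-2}^{d-1},x_{2j-1}^{d-1}\rangle$ only the terms with $l+l'=d-2$ survive, all equal to $x_{2j-2}^{d-2}x_{2j-1}^{d-2}$ with total coefficient $\sum_{l=0}^{d-2}a_{j}^{l}b_{j}^{d-2-l}$. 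The crux is evaluating $T_{j}:=a_{j}b_{j}\sum_{l=0}^{d-2}a_{j}^{l}b_{j}^{d-2-l}$: when $\alpha_{2j-2}=\beta_{2j-2}$, $a_{j}=b_{j}$ gives $T_{j}=(d-1)a_{j}^{d}=-(d-1)$; when $\alpha_{2j-2}\ne\beta_{2j-2}$, the identity $\sum_{l=0}^{d-2}a^{l}b^{d-2-l}=\frac{a^{d-1}-b^{d-1}}{a-b}$ together with $a^{d-1}=-a^{-1}$, $b^{d-1}=-b^{-1}$ yields $\frac1{a_{j}b_{j}}$, so $T_{j}=1$. With $s:=\#\{j:\alpha_{2j-2}=\beta_{2j-2}\}$ this gives $P_\delta P_\mu\equiv d^{n+2}(-(d-1))^{s}(x_{0}\cdots x_{n+1})^{d-2}\pmod{J^{F}}$, hence $c=(-1)^{s}(d-1)^{s-n-2}$.

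Finally I would observe that in each pair $(x_{2j-2},x_{2j-1})$ the locus $\P_\alpha^{\frac n2}\cap\P_\beta^{\frac n2}$ is a line when $\alpha_{2j-2}=\beta_{2j-2}$ and the origin otherwise, so $m=\dim(\P_\alpha^{\frac n2}\cap\P_\beta^{\frac n2})=s-1$; substituting $c$ and the degrees into \eqref{intform} then gives $\P_\alpha^{\frac n2}\cdot\P_\beta^{\frac n2}=\frac1d-\frac{(-(d-1))^{s}}{d}=\frac{1-(1-d)^{m+1}}{d}$. The main obstacle is the bookkeeping in the previous paragraph—pinning down which monomials survive modulo $J^{F}$, evaluating the roots-of-unity sums without sign errors (especially in the $a\ne b$ case), and matching the integer $s$ with $m+1$.
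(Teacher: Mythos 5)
Your proposal is correct and follows essentially the same route as the paper: compute the block-diagonal Jacobian determinant pair by pair to get $P_\delta$, then apply Corollary \ref{4.8.2018.3}(ii) and identify the surviving monomials of $P_\delta P_\mu$ modulo $\langle x_0^{d-1},\dots,x_{n+1}^{d-1}\rangle$, reducing to the same per-pair root-of-unity sum (the paper re-indexes it as $-\sum_{l=1}^{d-1}\zeta_{2d}^{(\alpha_{2j-2}-\beta_{2j-2})l}$, which you evaluate via the equivalent geometric-series identity). The cosmetic differences — explicit polynomial form of the block determinant versus the paper's quotient form, and the way the cyclotomic sum is packaged — do not change the argument.
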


\begin{proof}
Computing the Jacobian matrix of $H$ as in Theorem \ref{15.5.2018.2}, we see it is diagonal by $2\times2$ blocks, and each block has determinant
$$
\frac{d(\zeta_{2d}^{\alpha_{2j-2}}x_{2j-2}^{d-1}+x_{2j-1}^{d-1})}{x_{2j-2}-\zeta_{2d}^{\alpha_{2j-2}}x_{2j-1}},
$$
and so \eqref{cycllinfer} follows. In order to compute the intersection product apply Corollary \ref{4.8.2018.3}, part (ii). We just need to compute $c\in\C$ such that $P_\delta\cdot P_\mu\equiv c\cdot d^{n+2}(d-1)^{n+2}(x_0\cdots x_{n+1})^{d-2}$ (mod $\langle x_0^{d-1},\ldots,x_{n+1}^{d-1}\rangle$), where $\delta=\P^\frac{n}{2}_\alpha$ and $\mu=\P^\frac{n}{2}_\beta$. It follows from \eqref{cycllinfer} that 
$$
c=\frac{\zeta_{2d}^{(\alpha_0+\beta_0)+\cdots+(\alpha_n+\beta_n)}}{(d-1)^{n+2}}\prod_{j=1}^{\frac{n}{2}+1}\left(\sum_{l=0}^{d-2}\zeta_{2d}^{\alpha_{2j-2}l+\beta_{2j-2}(d-2-l)}\right)=\frac{\prod_{j=1}^{\frac{n}{2}+1}\left(\sum_{l=0}^{d-2}\zeta_{2d}^{\alpha_{2j-2}(l+1)+\beta_{2j-2}(d-1-l)}\right)}{(d-1)^{n+2}}.
$$
For every $j=1,\ldots,\frac{n}{2}+1$ 
$$
\sum_{l=0}^{d-2}\zeta_{2d}^{\alpha_{2j-2}(l+1)+\beta_{2j-2}(d-1-l)}=-\sum_{l=1}^{d-1}\zeta_{2d}^{(\alpha_{2j-2}-\beta_{2j-2})l}=\left\{
	\begin{array}{ll}
		1-d  & \mbox{if } \alpha_{2j-2}=\beta_{2j-2}, \\
		1 & \mbox{if } \alpha_{2j-2}\neq\beta_{2j-2}.
	\end{array}
\right.
$$
Therefore $c(d-1)^{n+2}=(1-d)^{m+1}$ and so by \eqref{intform} the result follows.
\end{proof}

We end this section by computing the periods of linear cycles inside Fermat varieties. This was the main theorem in \cite[Theorem 1]{MV}. Consider the following set $$I_{(d-2)(\frac{n}{2}+1)}:=\{(i_0,\ldots,i_{n+1})\in \{0,\ldots,d-2\}^{n+2}: i_0+\cdots+i_{n+1}=(d-2)(\frac{n}{2}+1)\},$$ we define for every $i\in I_{(d-2)(\frac{n}{2}+1)}$
$$
\omega_i:=\res\left(\frac{x^i\Omega}{F^{\frac{n}{2}+1}}\right)=\frac{1}{\frac{n}{2}!}\left\{\frac{x^i\Omega_J}{F_J}\right\}_{|J|=\frac{n}{2}}\in H^\frac{n}{2}(X,\Omega_X^\frac{n}{2}).
$$
From Griffiths' work \cite{gr69} we know these forms are a basis for $H^{\frac{n}{2},\frac{n}{2}}(X)_{\prim}$. 

\begin{cor}[\cite{MV}]
\label{perlinfer}
Let $X\subseteq \P^{n+1}$ be the degree $d$ even dimensional Fermat variety, let $\P^\frac{n}{2}\subseteq X$ as in \eqref{17.10.2018.2} for $\alpha_0=\cdots=\alpha_n=1$, and let $i\in I_{(d-2)(\frac{n}{2}+1)}$. Then
$$
\int_{\P^{\frac{n}{2}}}\omega_i= \left\{
	\begin{array}{ll}
		   \frac{(2\pi \sqrt{-1} )^
		   {\frac{n}{2}}}{d^{\frac{n}{2}+1}\cdot \frac{n}{2}!} {\zeta_{2d}}^{\frac{n}{2}+1+i_0+i_2+\cdots+i_n}  & \mbox{if } i_{2l-2}+i_{2l-1}=d-2, \forall l=1,\ldots,\frac{n}{2}+1, \\
		0 & \mbox{otherwise. } 
	\end{array}
\right.
$$
\end{cor}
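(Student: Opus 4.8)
The plan is to feed $P=x^i$ and $Z=\P^{\frac{n}{2}}$ directly into Theorem \ref{15.5.2018.2} and to carry out by hand the reduction modulo $J^F$ it prescribes, using the Fermat specifics. For $F=x_0^d+\cdots+x_{n+1}^d$ one has $\frac{\partial F}{\partial x_k}=dx_k^{d-1}$, so $J^F=\langle x_0^{d-1},\ldots,x_{n+1}^{d-1}\rangle$ and $R^F$ has monomial basis $\{x^j:0\le j_k\le d-2\}$; the socle $R^F_{(d-2)(n+2)}$ is spanned by $(x_0\cdots x_{n+1})^{d-2}$, which is moreover the only monomial of degree $(d-2)(n+2)$ with all exponents $\le d-2$. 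Since $Hess(F)$ is diagonal, $det(Hess(F))=\big(d(d-1)\big)^{n+2}(x_0\cdots x_{n+1})^{d-2}$. Thus Theorem \ref{15.5.2018.2} identifies the period with $\frac{(2\pi\sqrt{-1})^{\frac{n}{2}}}{\frac{n}{2}!}\,c\,(d-1)^{n+2}$, where $c\cdot\big(d(d-1)\big)^{n+2}$ is the coefficient of $(x_0\cdots x_{n+1})^{d-2}$ in the normal form of $x^i\cdot det(Jac(H))$ modulo $J^F$.

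For the factor $det(Jac(H))$ I would invoke Corollary \ref{2.9.2018.2} with $\alpha_0=\cdots=\alpha_n=1$: there are $\frac{n}{2}+1$ such indices, so $\alpha_0+\cdots+\alpha_n=\frac{n}{2}+1$ and
$$
det(Jac(H))=d^{\frac{n}{2}+1}\zeta_{2d}^{\frac{n}{2}+1}\prod_{j=1}^{\frac{n}{2}+1}\Big(\sum_{l=0}^{d-2}x_{2j-2}^{d-2-l}\zeta_{2d}^{l}x_{2j-1}^{l}\Big).
$$
Multiplying by $x^i=\prod_k x_k^{i_k}$ and expanding, a term is indexed by $(l_1,\ldots,l_{\frac{n}{2}+1})$ with $0\le l_j\le d-2$, carries the scalar $\zeta_{2d}^{l_1+\cdots+l_{\frac{n}{2}+1}}$, and has monomial part $\prod_{j=1}^{\frac{n}{2}+1}x_{2j-2}^{\,i_{2j-2}+d-2-l_j}x_{2j-1}^{\,i_{2j-1}+l_j}$. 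The $j$-th block has fixed degree $i_{2j-2}+i_{2j-1}+(d-2)$ regardless of $l_j$, so a term can survive modulo $J^F$ (all exponents $\le d-2$) only when $i_{2j-2}+i_{2j-1}=d-2$ for every $j$, and in that case the sole surviving choice is $l_j=i_{2j-2}=d-2-i_{2j-1}$, producing exactly $(x_0\cdots x_{n+1})^{d-2}$ with $\zeta_{2d}$-weight $\sum_j i_{2j-2}=i_0+i_2+\cdots+i_n$. Hence $x^i\cdot det(Jac(H))\equiv d^{\frac{n}{2}+1}\zeta_{2d}^{\frac{n}{2}+1+i_0+i_2+\cdots+i_n}(x_0\cdots x_{n+1})^{d-2}$ modulo $J^F$ when all pair-sums equal $d-2$, and lies in $J^F$ otherwise (a surviving monomial of degree $(d-2)(n+2)$ would be forced to be the socle generator, which we have just excluded).

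Finally I would read off $c=d^{\frac{n}{2}+1}\zeta_{2d}^{\frac{n}{2}+1+i_0+i_2+\cdots+i_n}/\big(d^{n+2}(d-1)^{n+2}\big)$ in the good case and $c=0$ otherwise, substitute into $\frac{(2\pi\sqrt{-1})^{\frac{n}{2}}}{\frac{n}{2}!}\,c\,(d-1)^{n+2}$, and cancel $(d-1)^{n+2}$ together with $d^{\,n+2-(\frac{n}{2}+1)}=d^{\frac{n}{2}+1}$ to obtain $\frac{(2\pi\sqrt{-1})^{\frac{n}{2}}}{d^{\frac{n}{2}+1}\frac{n}{2}!}\zeta_{2d}^{\frac{n}{2}+1+i_0+i_2+\cdots+i_n}$, exactly as claimed. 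The only genuine obstacle is the bookkeeping in the second paragraph — checking that exactly one term of the expanded product lands on the socle generator, that a mismatched pair-sum drives the whole product into $J^F$, and that the exponent of $\zeta_{2d}$ is tracked correctly through $P_\delta=det(Jac(H))$; everything else is a direct substitution into results already established.
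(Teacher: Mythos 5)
Your proposal is correct and follows essentially the same route as the paper's proof: apply Theorem \ref{15.5.2018.2} with $P=x^i$, use the explicit formula for $P_\delta=\det(Jac(H))$ from Corollary \ref{2.9.2018.2}, and reduce to a monomial count modulo $J^F=\langle x_0^{d-1},\dots,x_{n+1}^{d-1}\rangle$ to determine the coefficient of the socle generator $(x_0\cdots x_{n+1})^{d-2}$. Your write-up is somewhat more explicit than the paper's about the degree-balancing argument (that $\sum_j(i_{2j-2}+i_{2j-1})=(d-2)(\tfrac{n}{2}+1)$ forces each pair-sum to equal $d-2$ once all exponents are required to be $\le d-2$), but no new idea is introduced.
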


\begin{proof}
By Theorem \ref{15.5.2018.2} we just need to compute $c\in \C$ such that 
$$
x^iP_\delta\equiv c\cdot d^{n+2}(d-1)^{n+2}(x_0\cdots x_{n+1})^{d-2}\text{ (mod }\langle x_0^{d-1},\ldots,x_{n+1}^{d-1}\rangle).
$$
By Proposition \ref{2.9.2018.2} 
$$
x^iP_\delta=d^{\frac{n}{2}+1}\zeta_{2d}^{\frac{n}{2}+1}x^i\prod_{j=1}^{\frac{n}{2}+1}\left(\sum_{l=0}^{d-2}x_{2j-2}^{d-2-l}\zeta_{2d}^lx_{2j-1}^l\right),
$$
$$
\equiv c_i\cdot (x_0\cdots x_{n+1})^{d-2} \text{ (mod }\langle x_0^{d-1},\ldots,x_{n+1}^{d-1}\rangle ).
$$
If for every $j=1,\ldots,\frac{n}{2}+1$ there exist $l_j\in \{0,\ldots,d-2\}$ such that $l_j+i_{2j-1}=d-2$ and $d-2-l_j+i_{2j-2}=d-2$, then $c_i=\zeta_{2d}^{\frac{n}{2}+1+l_1+\cdots+l_{\frac{n}{2}}}$. This condition is equivalent to $l_j=i_{2j-2}$ and $i_{2j-2}+i_{2j-1}=d-2$. Otherwise $c_i=0$, and the result follows.
\end{proof}

\section{Proof of Theorem \ref{11.10.2018}}
\label{4}
Let $T$ be the parameter space of smooth degree $d$ hypersurfaces of $\P^{n+1}$, of even dimension $n$. Let $0\in T$ be the point corresponding to the Fermat variety $X_0=\{x_0^d+\cdots+x_{n+1}^d=0\}$. Letting 
$$
\P^{n-m}:=\{x_{n-2m}-\zeta_{2d}x_{n-2m+1}=\cdots=x_n-\zeta_{2d}x_{n+1}=0\},
$$
$$
\P^\frac{n}{2}:=\{x_0-\zeta_{2d}x_1=\cdots=x_{n-2m-2}-\zeta_{2d}x_{n-2m-1}=0\}\cap \P^{n-m},
$$
$$
\check\P^\frac{n}{2}:=\{x_0-\zeta_{2d}^{\alpha_0}x_1=\cdots=x_{n-2m-2}-\zeta_{2d}^{\alpha_{n-2m-2}}x_{n-2m-1}=0\}\cap \P^{n-m},
$$ 
where $\alpha_0,\alpha_2,\ldots,\alpha_{n-2m-2}\in \{3,\ldots,2d-1\}$. Then 
$$
\P^m:=\P^\frac{n}{2}\cap\check\P^\frac{n}{2}=\{x_0=x_1=\cdots=x_{n-2m-1}=0\}\cap\P^{n-m}.
$$
The following result is due to Movasati \cite[Propositions 17.9 and 17.9]{ho13}.

\begin{prop}
\label{16.10.2018.2}
$
\text{dim }V_{[\P^\frac{n}{2}]}\cap V_{[\check\P^\frac{n}{2}]}=\text{dim }T_0V_{[\P^\frac{n}{2}]}\cap T_0V_{[\check\P^\frac{n}{2}]}.
$
\end{prop}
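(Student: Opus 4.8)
The plan is to reduce the statement to a smoothness assertion at the Fermat point $X_0$ and then squeeze the local dimension of the intersection between two numbers that an explicit computation shows to coincide. Write $V:=V_{[\P^{\frac{n}{2}}]}$ and $V':=V_{[\check\P^{\frac{n}{2}}]}$, and give $V\cap V'$ the scheme structure defined by the sum of the two ideals of Definition \ref{5.11.2018}. Its cotangent space is $\mathfrak m/(\mathfrak m^2+\mathfrak a_V+\mathfrak a_{V'})$, so canonically $T_0(V\cap V')=T_0V\cap T_0V'$ inside $T_0T$; and for a finite type local $\C$-algebra one has $\dim\le\dim_\C T_0$, with equality exactly at a smooth point. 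Hence the proposition is equivalent to the smoothness of $V\cap V'$ at $X_0$, and since the inequality $\dim_0(V\cap V')\le\dim_\C T_0(V\cap V')$ is automatic, it suffices to exhibit an irreducible germ $W\subseteq (T,0)$ through $X_0$ with $W\subseteq V\cap V'$ and $\dim_0 W\ge\dim (T_0V\cap T_0V')$.

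For the germ $W$ I would use an incidence construction. Let $\Sigma$ be the variety of pairs $(L,L')$ of $\frac{n}{2}$-dimensional linear subspaces of $\P^{n+1}$ with $\dim(L\cap L')=m$; since the relative position of two linear subspaces of fixed dimensions is a discrete invariant, $\mathrm{PGL}_{n+2}$ acts transitively on $\Sigma$, so $\Sigma$ is smooth and irreducible with dimension computable by Schubert calculus. Over $\Sigma$ let $\mathcal I$ be the incidence variety whose fibre over $(L,L')$ is the set of smooth degree $d$ hypersurfaces of $\P^{n+1}$ containing $L\cup L'$; by transitivity the fibre dimension $h^0(\P^{n+1},\mathcal I_{L\cup L'}(d))=\binom{n+1+d}{d}-2\binom{\frac{n}{2}+d}{d}+\binom{m+d}{d}$ is constant, so $\mathcal I$ is smooth and irreducible and contains $(X_0,\P^{\frac{n}{2}},\check\P^{\frac{n}{2}})$. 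Let $W$ be the germ at $X_0$ of the image of the projection $\mathcal I\to T$; then $W$ is irreducible and $\dim_0 W=\dim\Sigma+h^0(\P^{n+1},\mathcal I_{L\cup L'}(d))-\phi$, where $\phi$ is the dimension of the generic fibre of $\mathcal I\to W$. That $W\subseteq V\cap V'$ is the standard deformation argument: over a connected neighbourhood of $(X_0,\P^{\frac{n}{2}},\check\P^{\frac{n}{2}})$ in $\mathcal I$ the two universal linear cycles furnish flat sections of $R^n\pi_*\Z$ that are algebraic classes and that specialise to $[\P^{\frac{n}{2}}]$ and $[\check\P^{\frac{n}{2}}]$ over $X_0$; being flat they are the parallel transports of those classes and stay of Hodge type $(\frac{n}{2},\frac{n}{2})$, so the image of the neighbourhood lies in $V\cap V'$. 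In particular $\dim_0 W\le\dim_0(V\cap V')$.

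It remains to check $\dim_0 W=\dim(T_0V\cap T_0V')$. By Corollary \ref{16.10.2018}, at $X_0$ (so $F=x_0^d+\cdots+x_{n+1}^d$ and $J^F=\langle x_0^{d-1},\dots,x_{n+1}^{d-1}\rangle$) we have $T_0V=(J^F:P_{\P^{\frac{n}{2}}})_d$ and $T_0V'=(J^F:P_{\check\P^{\frac{n}{2}}})_d$. Now $R^F$ is the tensor product over $j=1,\dots,\frac{n}{2}+1$ of the two-variable algebras $\C[x_{2j-2},x_{2j-1}]/(x_{2j-2}^{d-1},x_{2j-1}^{d-1})$, and by Corollary \ref{2.9.2018.2} the polynomials $P_{\P^{\frac{n}{2}}}$ and $P_{\check\P^{\frac{n}{2}}}$ are products of one factor per block, the factors for $j>\frac{n}{2}-m$ (coming from the common $\P^{n-m}$) being equal for the two cycles and the remaining ("moving") factors being of the shape analysed in Proposition \ref{caex}. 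The annihilators of such products, and their intersection, split blockwise, so $\dim\big((J^F:P_{\P^{\frac{n}{2}}})_d\cap(J^F:P_{\check\P^{\frac{n}{2}}})_d\big)$ reduces to a combinatorial count of degree $d$ pieces in the two-variable blocks (in which $d$ lies at most at the socle), giving an explicit expression in $d,n,m$. Matching this with the incidence count yields $\dim_0 W=\dim(T_0V\cap T_0V')$, hence
$$\dim(T_0V\cap T_0V')=\dim_0 W\le\dim_0(V\cap V')\le\dim_\C T_0(V\cap V')=\dim(T_0V\cap T_0V'),$$
so $V\cap V'$ is smooth and reduced at $X_0$, which is the claim.

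The main obstacle is precisely this last matching of two closed-form dimensions in $d,n,m$: on the geometric side getting $\dim\Sigma$ by Schubert calculus, the constant $h^0$, and the generic fibre dimension $\phi$ of $\mathcal I\to W$ exactly right; on the algebraic side the blockwise bookkeeping of $(J^F:P_{\P^{\frac{n}{2}}})_d\cap(J^F:P_{\check\P^{\frac{n}{2}}})_d$, where the common and the moving two-variable blocks must be handled separately. A secondary technical point is that the Hodge-locus germs must be known to carry the scheme structure of Definition \ref{5.11.2018} compatibly with Corollary \ref{16.10.2018}; and if one prefers to realise $V$ and $V'$ themselves as incidence images, one must rule out, near $X_0$, components of $V\cap V'$ arising from the strata $\dim(L\cap L')<m$.
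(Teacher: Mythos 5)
Your argument is correct and, at its core, is the same dimension-count as the paper's: you compute the dimension of the Hodge locus intersection via a fibration over the incidence variety of pairs of $\frac{n}{2}$-planes meeting in a $\P^m$, compute the Zariski-tangent intersection via the Jacobian ring, and match the two closed forms in $d,n,m$. That matching is exactly the step both you and the paper leave unwritten --- the paper defers to \cite[Propositions 17.8 and 17.9]{ho13} and merely records the common value
\[
2\binom{\frac{n}{2}+d}{d}-2\bigl(\tfrac{n}{2}+1\bigr)^2-\binom{m+d}{d}+(m+1)^2,
\]
while you flag it as the main obstacle. There are two small packaging differences worth noting. First, to evaluate the geometric side the paper invokes variational Hodge conjecture for linear cycles in order to identify $V_{[\P^{n/2}]}\cap V_{[\check\P^{n/2}]}$ outright with the image of the incidence variety; your squeeze
\[
\dim(T_0V\cap T_0V')=\dim_0 W\le\dim_0(V\cap V')\le\dim_\C T_0(V\cap V')=\dim(T_0V\cap T_0V')
\]
only needs the one-sided inclusion $W\subseteq V\cap V'$ (the easy direction: a flat family of algebraic cycles gives a flat Hodge section), so it genuinely dispenses with the VHC input --- a clean refinement, and it also establishes smoothness and reducedness at $X_0$ as a byproduct, which the blunter identification does too but less transparently. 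Second, for the tangent side the paper computes $\mathrm{Codim}\,T_0V\cap T_0V'=2\,\mathrm{Codim}\,T_0V-\mathrm{Codim}(T_0V+T_0V')$ by vector-space inclusion--exclusion, whereas you propose a direct blockwise count of $(J^F:P_1)_d\cap(J^F:P_2)_d$ using the tensor decomposition of the Fermat Jacobian ring; these are equivalent bookkeepings of the same Gorenstein annihilator data. So: same approach, with your version slightly leaner in its inputs, and in both versions the one nontrivial verification is the arithmetic agreement of the two dimension formulas.
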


Movasati's proof of Proposition \ref{16.10.2018.2} consists in computing explicitly both sides of the equality. Since variational Hodge conjecture holds for linear cycles (see \cite{Dan2017}, \cite{GMCD-NL}, or \cite{MV}), $V_{[\P^\frac{n}{2}]}\cap V_{[\check\P^\frac{n}{2}]}$ corresponds to the locus of hypersurfaces containing two linear cycles intersecting each other in a $m$ dimensional linear subvariety. Knowing this, it is easy to compute its dimension as a fibration over the incidence variety of pairs of $\frac{n}{2}$-dimensional linear subvarieties of $\P^{n+1}$ intersecting each other in a $m$-dimensional linear subvariety. In fact, (this computation can be found in \cite[Proposition 17.9]{ho13})  
\begin{equation}
\label{11.10.2018.2}
\text{Codim }V_{[\P^\frac{n}{2}]}\cap V_{[\check\P^\frac{n}{2}]}=2{\frac{n}{2}+d\choose d}-2(\frac{n}{2}+1)^2-{m+d\choose d}+(m+1)^2.
\end{equation}
On the other hand, it is also easy to compute the codimension of $T_0V_{[\P^\frac{n}{2}]}\cap T_0V_{[\check\P^\frac{n}{2}]}$ (see \cite[Proposition 17.8]{ho13}) and coincides with \eqref{11.10.2018.2}
\begin{eqnarray*}
\text{Codim }T_0V_{[\P^\frac{n}{2}]}\cap T_0V_{[\check\P^\frac{n}{2}]}&=&2\text{Codim }T_0V_{[\P^\frac{n}{2}]}-\text{Codim } T_0V_{[\P^\frac{n}{2}]}+ T_0V_{[\check\P^\frac{n}{2}]}\\
&=&2{\frac{n}{2}+d\choose d}-2(\frac{n}{2}+1)^2-{m+d\choose d}+(m+1)^2.
\end{eqnarray*}

\begin{rmk}
\label{16.10.2018.4}
After Proposition \ref{16.10.2018.2}, Theorem \ref{11.10.2018} is reduced to show that
$$
T_0V_{[\P^\frac{n}{2}]}\cap T_0V_{[\check\P^\frac{n}{2}]}=T_0V_{[\delta]},
$$
if and only if $m<\frac{n}{2}-\frac{d}{d-2}$.
By Corollaries \ref{16.10.2018} and \ref{2.9.2018.2}, this is equivalent to the following algebraic equality 
\begin{equation}
\label{16.10.2018.3}
(J^F:P_1)_d\cap (J^F:P_2)_d=(J^F:P_1+P_2)_d.
\end{equation}
Where $P_1=R_1Q$, $P_2=R_2Q$,
$$
Q:=\prod_{k\ge n-2m\text{ even}}\frac{(x_k^{d-1}-(\zeta_{2d}x_{k+1})^{d-1})}{(x_k-\zeta_{2d}x_{k+1})},
$$
$$
R_1:=c_1\cdot\prod_{k<n-2m\text{ even}}\frac{(x_k^{d-1}-(\zeta_{2d}x_{k+1})^{d-1})}{(x_k-\zeta_{2d}x_{k+1})},
$$
and 
$$
R_2:=c_2\cdot\prod_{k<n-2m\text{ even}}\frac{(x_k^{d-1}-(\zeta_{2d}^{\alpha_k}x_{k+1})^{d-1})}{(x_k-\zeta_{2d}^{\alpha_k}x_{k+1})},
$$
for some $c_1,c_2\in\C^\times$.
\end{rmk}

\noindent{\textbf{Proof of Theorem \ref{11.10.2018}}}
After Remark \ref{16.10.2018.4} we have reduced the proof to prove the equality \eqref{16.10.2018.3}. We claim that
\begin{equation}
\label{3.12.2018.2}
(J^F:P_1)_e\cap (J^F:P_2)_e=(J^F:P_1+P_2)_e,
\end{equation}
if and only if $e<(d-2)(\frac{n}{2}-m)$ or $e>(d-2)(\frac{n}{2}+1)$. In fact, for $e>(d-2)(\frac{n}{2}+1)$ the claim follows from the fact that $(d-2)(\frac{n}{2}+1)$ is the socle of the three ideals appearing in \eqref{3.12.2018.2}. For $e<(d-2)(\frac{n}{2}-m)$, consider any $q\in (J^F:P_1+P_2)_e$. Write
$$
q=r+s,
$$
where $r\in \C[x_0,\ldots,x_{n-2m-1}]_e$ and $s\in \langle x_{n-2m},\ldots,x_{n+1}\rangle_e\subseteq\C[x_0,\ldots,x_{n+1}]_e$. Noting that $$(J^F:Q)=\langle x_0^{d-1},\ldots,x_{n-2m-1}^{d-1},x_{n-2m},\ldots,x_{n+1}\rangle,$$ it is clear that $s\in (J^F:P_i)=((J^F:Q):R_i)$ for every $i=1,2$. In consequence $r\in ((J^F:Q):R_1+R_2)$. Since $r\cdot(R_1+R_2)$ does not depend on $x_{n-2m},\ldots,x_{n+1}$ we conclude that $$r\in (I:R_1+R_2)_e\subseteq\C[x_0,\ldots,x_{n-2m-1}]_e$$ for $I=\langle x_0^{d-1},\ldots,x_{n-2m-1}^{d-1}\rangle$. Using Proposition \ref{caex} for $r=\frac{n}{2}-m$, we conclude that $r\in (I:R_i)_e$ for $i=1,2$, and so $q\in (J^F:P_i)_e$ for $i=1,2$ as claimed.

Finally, if $(d-2)(\frac{n}{2}-m)\le e\le (d-2)(\frac{n}{2}+1)$, we know from Proposition \ref{caex} for $r=\frac{n}{2}-m$, that there exist some $p\in \C[x_0,\ldots,x_{n-2m}]$ such that
$$
p\in (J^F:R_1+R_2)_{(d-2)(\frac{n}{2}-m)}\setminus (J^F:R_1)_{(d-2)(\frac{n}{2}-m)}, 
$$
and so
$$
p\in (J^F:P_1+P_2)_{(d-2)(\frac{n}{2}-m)}\setminus (J^F:P_1)_{(d-2)(\frac{n}{2}-m)}. 
$$
Since $(J^F:P_1)$ is Artinian Gorenstein with socle $(d-2)(\frac{n}{2}+1)$, we conclude that there exist some $q\in \C[x_0,\ldots,x_{n+1}]_{e-(d-2)(\frac{n}{2}-m)}$ such that 
$$
pq\in (J^F:P_1+P_2)_e\setminus (J^F:P_1)_e, 
$$
as desired.
\qed

\section{Proof of Theorem \ref{gent}}

Let $\pi:X\rightarrow T$ be the family of smooth degree $d$ hypersurfaces of $\P^{n+1}$, of even dimension $n$. Consider
$$
W_m:=\{t\in T: X_t\text{ contains two linear cycles }\P^\frac{n}{2}, \check\P^\frac{n}{2}\text{ with }\P^\frac{n}{2}\cap\check\P^\frac{n}{2}=\P^m\}.    
$$
Let $\mathscr{H}$ be the incidence variety (between elements of the Hilbert scheme) parametrizing triples $(X_t,\P^\frac{n}{2},\check\P^\frac{n}{2})$, where $X_t$ is a smooth degree $d$ hypersurface of $\P^{n+1}$ containing two linear subvarieties $\P^\frac{n}{2},\check\P^\frac{n}{2}\subseteq X_t$ such that $\P^\frac{n}{2}\cap \check\P^\frac{n}{2}=\P^m$. Consider the map
$$
\Phi:\mathscr{H}\rightarrow Hom_\C(\C[x_0,\ldots,x_{n+1}]_d,\C[x_0,\ldots,x_{n+1}]_{d\frac{n}{2}-n-2}^*),
$$
given by
$$
\Phi(X_t,\P^\frac{n}{2},\check\P^\frac{n}{2})= \left[\int_{a\cdot\P^\frac{n}{2}+b\cdot\check\P^\frac{n}{2}}\res\left(\frac{PQ\Omega}{F^{\frac{n}{2}+1}}\right)\right]_{P,Q}.
$$
This map is regular (by \cite{cadeka} or by Theorem \ref{15.5.2018.2}), hence it is continuous in the Zariski topology of $\mathscr{H}$. By Proposition \ref{17.10.2018} we already know that
$$
T_tV_{a[\P^\frac{n}{2}]+b[\check\P^\frac{n}{2}]}=\text{Ker }\Phi(X_t,\P^\frac{n}{2},\check\P^\frac{n}{2}).
$$
This implies that each subset of $\mathscr{H}$ where $\text{Ker }\Phi$ has constant dimension is a locally closed subset in Zariski topology. Theorem \ref{11.10.2018} implies that $$\text{dim }\mathscr{H}=\text{dim }T_0V_{a[\P^\frac{n}{2}]+b[\check\P^\frac{n}{2}]}.$$ Therefore, the point $(X_0,\P^\frac{n}{2},\check\P^\frac{n}{2})$ corresponding to the Fermat variety together with its two linear subvarieties, is a smooth point of $\mathscr{H}$. Furthermore $\text{Ker }\Phi$ has constant dimension in a polydisc around $(X_0,\P^\frac{n}{2},\check\P^\frac{n}{2})\in\mathscr{H}$, hence the same holds in a Zariski neighbourhood of this point. Therefore $$\text{dim }\mathscr{H}=\text{dim }T_tV_{a[\P^\frac{n}{2}]+b[\check\P^\frac{n}{2}]} \hspace{8mm} \forall t\in U,$$ for $U$ a Zariski open set of $W_m$ (not necessarily a neighbourhood of the Fermat variety), and variational Hodge conjecture holds for $[\delta]=a[\P^\frac{n}{2}]+b[\check\P^\frac{n}{2}]\in H^n(X_t,\Z)\cap H^{\frac{n}{2},\frac{n}{2}}(X_t)$ and $t\in U$.

\section{Final remarks}
\label{fin}

Considering $[\delta]=a[\P^\frac{n}{2}]+b[\check\P^\frac{n}{2}]\in H^n(X_0,\Z)\cap H^{\frac{n}{2},\frac{n}{2}}(X_0)$, we can ask if this Hodge cycle satisfies variational Hodge conjecture for the cases not covered by Theorem \ref{11.10.2018}. The remaining open cases are the following: $(d,m)=(3,\frac{n}{2}-3), (3,\frac{n}{2}-2), (4,\frac{n}{2}-2)$ and $m=\frac{n}{2}-1$ with $a\neq b$. Note that the cases $m=\frac{n}{2}-1$ with $a=b$, and $m=\frac{n}{2}$ are both complete intersection algebraic cycles, where variational Hodge conjecture holds by \cite{Dan2017}. It would be interesting to determine whether for the remaining cases the corresponding Hodge locus $V_{[\delta]}$ is smooth and reduced. This problem has been considered by Movasati for small degree and dimension in \cite[Chapter 18]{ho13}. After knowing all the periods of the linear cycles inside Fermat, Movasati was able to compute higher order approximations of the Hodge locus. Using them, he proves in several cases (see \cite[Theorem 18.3]{ho13}) that $V_{[\delta]}$ is not smooth and reduced (explaining the difference between the tangent spaces of $V_{[\delta]}$ and $V_{[\P^\frac{n}{2}]}\cap V_{[\check\P^\frac{n}{2}]}$ in Theorem \ref{11.10.2018}). On the other hand he also provides interesting examples, such as $(n,d,m,a,b)=(6,3,1,1,-1)$ and $(n,d,m)=(6,3,0)$ (see \cite[Theorem 18.2]{ho13}), where $V_{[\delta]}$ is possibly smooth and reduced. In such cases $V_{[\delta]}$ must be strictly bigger than $V_{[\P^\frac{n}{2}]}\cap V_{[\check\P^\frac{n}{2}]}$, and it would be very interesting to study this phenomenon, and determine if it is due to the existence of some new algebraic cycle (with cohomological class having the same primitive part as $[\delta]$) with a larger deformation space. 

\section*{Acknowledgements}

Part of this work was developed during my Ph.D. at IMPA between 2016 and 2018. Part of this article was written during my short stay at Hausdorff Research Institute for Mathematics during the program ``Periods in Number Theory, Algebraic Geometry and Physics", and also during my visit to Harvard CMSA. I thank all these institutes for their support and for providing such stimulating environments to work. I am deeply grateful to my advisor Hossein Movasati, for all his comments and contributions to this article. I am also grateful to Emre Sert\"oz, Ananyo Dan, Prof. Remke Kloosterman and Prof. Claire Voisin for their suggestions and criticism that inspired several improvements on the final presentation of this work.


\end{document}